\newcolumntype{F}{>{$}c<{\hspace{-0.9ex}$}}
\newcolumntype{:}{>{$}m{0.8ex}<{$}}
\newcolumntype{R}{>{$}r<{$}}
\newcolumntype{C}{>{$}c<{$}}
\newcolumntype{L}{>{$}l<{$}}
\newcolumntype{N}{@{}>{$}l<{$}}
\newlength\horspace
\newcommand{\h}[1][1.0]{\hspace{#1\horspace}}
\newlength\verspace
\newlength\negverspace
\tikzset{iso/.style={draw=none,every to/.append style={edge node={node [sloped, allow upside down, auto=false]{$\cong$}}}}}
\tikzset{simeq/.style={draw=none,every to/.append style={edge node={node [sloped, allow upside down, auto=false]{$\simeq$}}}}}
\tikzset{simeqS/.style={draw=none,every to/.append style={edge node={node [sloped, allow upside down, auto=false]{$\raisebox{0.8em}{$\simeq$}$}}}}}
\tikzset{simeqSRight/.style={draw=none,every to/.append style={edge node={node [sloped, allow upside down, auto=false]{$\raisebox{-1em}{\rotatebox{180}{$\simeq$}}$}}}}}
\tikzset{aiso/.style={simeqS,preaction={draw,->}}}
\tikzset{aisos/.style={simeqSs,preaction={draw,->}}}
\tikzset{Right/.style={double distance=1.7pt,>={Implies},->}}
\tikzset{twoiso/.style={simeqSRight,preaction={draw,Right}}}
\tikzset{dotdot/.style={dash pattern=on 0.25ex off 0.2ex, dash phase=0ex}}
\tikzset{simeqSs/.style={draw=none,every to/.append style={edge node={node [sloped, allow upside down, auto=false]{$\raisebox{-0.8em}{\rotatebox{180}{$\simeq$}}$}}}}}
\tikzset{RightA/.style={double distance=3.5pt,>={Implies},->},%
	triple/.style={-,preaction={draw,RightA}},%
	quadruple/.style={preaction={draw,RightA,shorten >=0pt},shorten >=1pt,-,double,double distance=0.8pt}}
\tikzset{simS/.style={draw=none,every to/.append style={edge node={node [sloped, allow upside down, auto=false]{$\raisebox{0.8em}{$\sim$}$}}}}}
\tikzset{aeq/.style={simS,preaction={draw,->}}}
\DeclareMathOperator{\bbicolim}{bicolim}
\newcommand{\bicolim}[2]{{\bbicolim}^{#1}\h{#2}}
\newcommand{\sigmabicolim}[1]{\sigma\mbox{-}\h[1.5]\bicolim{}{#1}}
\newtheorem{teor}{Theorem}[section]
\newtheorem{lemma}[teor]{Lemma}
\newtheorem{prop}[teor]{Proposition}
\newtheorem{costr}[teor]{Construction}
\newtheorem{teorintro}{Theorem}
\theoremstyle{definition}
\newtheorem{defne}[teor] {Definition}
\newtheorem{oss}[teor]{Remark}
\newtheorem*{notazione}{Notation}
\newenvironment{cd}{\[\begin{tikzcd}[row sep=7ex,column sep=7ex,ampersand replacement=\&]}{\end{tikzcd}\]\ignorespacesafterend}
\newenvironment{cds}[2]{\[\begin{tikzcd}[row sep=#1ex, column sep=#2ex,ampersand replacement=\&]}{\end{tikzcd}\]\ignorespacesafterend}
\newenvironment{cdN}{\begin{tikzcd}[row sep=7ex,column sep=7ex,ampersand replacement=\&]}{\end{tikzcd}\ignorespacesafterend}
\newenvironment{cdsN}[2]{\begin{tikzcd}[row sep=#1ex, column sep=#2ex,ampersand replacement=\&]}{\end{tikzcd}\ignorespacesafterend}
\newenvironment{eqD}[1]{\begin{equation}\label{#1}}{\end{equation}\ignorespacesafterend}
\newenvironment{eqD*}{\begin{equation*}}{\end{equation*}\ignorespacesafterend}
\def\:{\colon}
\def\phi{\varphi}
\def\dfn#1{{\bfseries\itshape #1}}
\def\predfn#1{{\itshape #1}}
\newcommand{\st}{^{\ast}}
\def\c{\circ}
\DeclareFontFamily{OT1}{pzc}{}
\DeclareFontShape{OT1}{pzc}{m}{it}{<->s*[1.21]pzcmi7t}{}
\DeclareMathAlphabet{\mathpzc}{OT1}{pzc}{m}{it}
\DeclareFontFamily{U}{dutchcal}{\skewchar\font=45}
\DeclareFontShape{U}{dutchcal}{m}{n}{<->s*[1.05] dutchcal-r}{}
\DeclareMathAlphabet{\mathlcal}{U}{dutchcal}{m}{n}
\newcommand{\catfont}[1]{\ensuremath{\mathpzc{#1}}\xspace}
\newcommand{\Cal}[1]{\ensuremath{\mathcal{#1}}\xspace}
\newcommand{\K}{\catfont{K}}
\newcommand{\Cat}{\catfont{Cat}}
\newcommand{\Bicat}{\catfont{Bicat}}
\newcommand{\twocat}{2\catfont{Cat}}
\newcommand{\Sh}[2][\@nil]{%
	\def\tmp{#1}%
	\ifx\tmp\@nnil{\ensuremath{\catfont{Sh}\hspace{-0.15ex}\left({#2}\right)}}%
	\else{\ensuremath{\catfont{Sh}\hspace{-0.15ex}\left({#2},{#1}\right)}}\fi}
\newcommand{\St}[2][\@nil]{%
	\def\tmp{#1}%
	\ifx\tmp\@nnil{\ensuremath{\catfont{St}\hspace{-0.15ex}\left({#2}\right)}}%
	\else{\ensuremath{\catfont{St}\hspace{-0.15ex}\left({#2},{#1}\right)}}\fi}
\newcommand{\Grothdiag}[1]{\Intdiag{#1}}
\newcommand{\Intdiag}[1]{\ensuremath{\scaleu{\int} \hspace{-0.15ex} #1}}
\newcommand{\x}[1][]{\h[-1]\times_{#1}\h[-1]}
\newcommand{\xp}[2]{\h[-1]{\times}^{#2}_{#1}\h[-1]}
\newcommand{\opn}[1]{\operatorname{#1}}
\newcommand{\id}[1]{\operatorname{id}_{#1}}
\newcommand{\op}{\ensuremath{^{\operatorname{op}}}}
\newcommand{\restr}[2]{{\left.\kern-\nulldelimiterspace {#1}\vphantom{\big|} \right|_{#2}}}
\newcommand{\pr}[1]{\operatorname{pr}_{#1}}
\newcommand{\scaleu}[2][1.2]{{\scalebox{#1}{$#2$}}}
\newcommand{\ar}[2][]{\xrightarrow[#1]{#2}}
\def\xlongrightarrowfill@{\arrowfill@\relbar\relbar\longrightarrow}
\newcommand{\arr}[2][]{%
	\ext@arrow 0099\xlongrightarrowfill@{#1}{#2}}
\newcommand{\aarr}[2][]{%
	\ext@arrow 0099\xlongrightarrowfill@{#1}{#2}} 
\newcommand{\aR}[2][]{%
	\ext@arrow 0055{\Rightarrowfill@}{#1}{#2}}
\def\xLongrightarrowfill@{\arrowfill@\Relbar\Relbar\Longrightarrow}
\newcommand{\aRR}[2][]{%
	\ext@arrow 0099\xLongrightarrowfill@{#1}{#2}}
\def\aitofill@{\arrowfill@{\lhook\joinrel\relbar}\relbar\rightarrow}
\newcommand{\aito}[2][]{%
	\ext@arrow 3095\aitofill@{#1}{#2}}
\def\Longaitofill@{\arrowfill@{\lhook\joinrel\relbar\joinrel\relbar}\relbar\rightarrow}
\newcommand{\aitoo}[2][]{%
	\ext@arrow 0099\Longaitofill@{#1}{#2}}
\def\xlongleftarrowfill@{\arrowfill@\longleftarrow\relbar\relbar}
\newcommand{\all}[2][]{%
	\ext@arrow 0099\xlongleftarrowfill@{#1}{#2}}
\newcommand{\aL}[2][]{%
	\ext@arrow 0055{\Leftarrowfill@}{#1}{#2}}
\def\xLongleftarrowfill@{\arrowfill@\Longleftarrow\Relbar\Relbar}
\newcommand{\aLL}[2][]{%
	\ext@arrow 0099\xLongleftarrowfill@{#1}{#2}}
\def\xmapstofill@{\arrowfill@{\mapstochar\relbar}\relbar\rightarrow}
\newcommand{\am}[2][]{%
	\ext@arrow 0395\xmapstofill@{#1}{#2}}
\def\xlongmapstofill@{\arrowfill@\relbar\relbar\longmapsto}
\newcommand{\amm}[2][]{%
	\ext@arrow 0399\xlongmapstofill@{#1}{#2}}
\newcommand{\eqq}{\DOTSB\protect\Relbar\protect\joinrel\Relbar}
\def\xeqqfill@{\arrowfill@\Relbar\Relbar\eqq}
\newcommand{\aeqq}[2][]{%
	\ext@arrow 0099\xeqqfill@{#1}{#2}}
\def\xRrightarrowfill@{\arrowfill@\equiv\equiv\Rrightarrow}
\newcommand{\aM}[2][]{\ext@arrow 0359\xRrightarrowfill@{#1}{#2}}
\newcommand{\Llongrightarrow}{%
	\DOTSB\protect\equiv\protect\joinrel\Rrightarrow}
\def\xLlongrightarrowfill@{\arrowfill@\equiv\equiv\Llongrightarrow}
\newcommand{\aMM}[2][]{%
	\ext@arrow 0099\xLlongrightarrowfill@{#1}{#2}}
\newcommand{\aequi}{\ensuremath{\stackrel{\raisebox{-1ex}{\kern-.3ex$\scriptstyle\sim$}}{\rightarrow}}}
\newcommand{\aequii}{\ensuremath{\stackrel{\raisebox{-1ex}{\kern-.3ex$\scriptstyle\sim$}}{\longrightarrow}}}
\newcommand{\PB}[1]{\arrow[#1,phantom,"\scalebox{1.6}{\color{black}$\lrcorner$}",very near start]}
\newcommand{\Ar}[4][]{\arrow[#2,"{#3}"{#1},""{name=#4, anchor=center}]}
\newcommand{\Ars}[4][]{\arrow[#2,"{#3}"'{#1},""{name=#4, anchor=center}]}
\newcommand{\Arb}[6][]{\arrow[#2,"{#3}"{#1},from=#4,to=#5,shorten <= #6 em, shorten >= #6 em]}
\newcommand{\Arbs}[6][]{\arrow[#2,"{#3}"'{#1},from=#4,to=#5,shorten <= #6 em, shorten >= #6 em]}
\NewDocumentEnvironment{cdl}{s O{7} O{7} b}{%
	\IfBooleanF{#1}{\begin{equation*}}\begin{tikzcd}[row sep=#2ex,column sep=#3ex,ampersand replacement=\&]
			#4
		\end{tikzcd}\IfBooleanF{#1}{\end{equation*}}\ignorespacesafterend}{}
\NewDocumentCommand{\csq}{s O{n} O{7} O{7} O{} O{2.7} O{2.2} O{0.5} O{n}}{%
	\def\foocsq##1##2##3##4##5##6##7##8{%
		\IfBooleanTF{#1}{\begin{cdl}*}{\begin{cdl}}[#3][#4]
				{##1}\ifx#2p{\PB{rd}}\fi\arrow[r,"{##5}"]\ifx#9l{\arrow[d,equal,"{##6}"']}\else{\arrow[d,"{##6}"']}\fi\&{##2}\ifx#9r{\arrow[d,equal,"{##7}"]}\else{\arrow[d,"{##7}"]}\fi\ifx#2l{\arrow[ld,Rightarrow,shorten <=#6ex,shorten >=#7ex,"{#5}"{pos=#8}]}\fi\\
				{##3}\ifx#9d{\arrow[r,equal,"{##8}"']}\else{\arrow[r,"{##8}"']}\fi\ifx#2o{\arrow[ur,Rightarrow,shorten <=#6ex,shorten >=#7ex,"{#5}"{pos=#8}]}\fi\&{##4}
		\end{cdl}}%
		\foocsq
}
\newcommand{\commaunivvN}[9][]{%
	\def\foocommaunivvN##1##2##3##4##5{%
		\begin{cdN}
			#2\arrow[rrd,bend left,"{#3}",""'{name=A}]\arrow[rdd,bend right=35,"{#4}"',""{name=B}]\arrow[rd,"{#5}"{#1}]\&[-4ex]\\[-4ex]
			\&#6 \arrow[r,"{##1}"] \arrow[d,"{##2}"'] \& #7 \arrow[ld,Rightarrow,"{##5}",shorten <=2.9ex,shorten >=2.5ex] \arrow[d,"{##3}"] \\
			\&#8 \arrow[r,"{##4}"'] \& #9
	\end{cdN}}%
	\foocommaunivvN%
}
\newcommand{\bicommaunivvN}[9][]{%
	\def\foobicommaunivvN##1##2##3##4##5##6##7{%
		\begin{cdN}
			#2\arrow[rrd,bend left,"{#3}",""'{name=A}]\arrow[rdd,bend right=35,"{#4}"',""{name=B}]\arrow[rd,"{#5}"{#1}]\&[-4ex]\\[-4ex]
			\&#6 \arrow[from=A,Rightarrow,"{##6}"{pos=0.4},shorten <=0.8ex,shorten >=0.8ex,]\arrow[to=B,Rightarrow,"{##7}",shorten <=0.3ex,shorten >=0.3ex,]\arrow[r,"{##1}"] \arrow[d,"{##2}"'] \& #7 \arrow[ld,Rightarrow,"{##5}",shorten <=2.9ex,shorten >=2.5ex] \arrow[d,"{##3}"] \\
			\&#8 \arrow[r,"{##4}"'] \& #9
	\end{cdN}}%
	\foobicommaunivvN%
}
\newcommand{\biisocommaN}[9][0.5]{%
	\def\foobiisocommaN##1##2##3{%
		\begin{cdsN}{5.5}{5.5}
			#2 \arrow[r,"{#6}"] \arrow[d,"{#7}"'] \& #3 \arrow[ld,twoiso,shorten <=##2ex,shorten >=##3ex,"{##1}"{pos=#1}] \arrow[d,"{#8}"]\\
			#4 \arrow[r,"{#9}"'] \& #5
	\end{cdsN}}%
	\foobiisocommaN%
}
\NewDocumentCommand{\twonats}{s O{2.2} O{8} O{7} O{1.05} O{3.45} O{2}}{%
	\def\footwonats##1##2##3##4##5##6##7##8##9{%
		\def\foofootwonats####1####2####3####4####5{%
			\IfBooleanTF{#1}{\begin{cdl}*}{\begin{cdl}}[#3][#4]
					##1 \Ar{r}{##9}{} \Ars{d,bend right=40}{##5}{A} \Ar{d,bend left=40}{##6}{B} \&
					##2 \Ars{d,bend left}{##8}{Q} \arrow[ld,Rightarrow,shift left=#7,"{####4}"{pos=0.48},shorten <=#5ex, shorten >=#6ex]\&[-2ex]
					##1 \Ar{r}{##9}{} \Ar{d,bend right}{##5}{R} \&
					##2 \Ars{d,bend right=40}{##7}{C} \Ar{d,bend left=40}{##8}{D} \arrow[ld,Rightarrow,shift right=#7,"{####5}"'{pos=0.52},shorten <=#6ex, shorten >=#5ex] \\
					##3 \Ars{r}{####1}{} \&
					##4 \&
					##3 \Ars{r}{####1}{} \&
					##4
					\Arbs{Rightarrow}{\,{####2}}{B}{A}{0.3}
					\Arbs{Rightarrow}{\,{####3}}{D}{C}{0.3}
					\Arb{equal}{}{Q}{R}{#2}
			\end{cdl}}%
			\foofootwonats}\footwonats}
\def\dfn#1{{\bfseries #1}}
\def\predfn#1{{\itshape #1}}
\newcommand{\cX}{\Cal{X}}
\newcommand{\cM}{\Cal{M}}
\newcommand{\cY}{\Cal{Y}}
\newcommand{\cZ}{\Cal{Z}}
\newcommand{\cG}{\Cal{G}}
\newcommand{\cP}{\Cal{P}}
\newcommand{\cR}{\Cal{R}}
\newcommand{\cQ}{\Cal{Q}}
\newcommand{\cU}{\Cal{U}}
\newcommand{\cT}{\Cal{T}}
\newcommand{\cW}{\Cal{W}}
\newcommand{\wt}{\widetilde}
\newcommand{\laxsliceslant}[2]{{\raisebox{.1em}{$#1$}\mkern-1mu\left/_{\mkern-4.1mu\operatorname{lax}}\right.\raisebox{-.25em}{$#2$}}}
\newcommand{\laxslice}[2]{\laxsliceslant{#1}{#2}}
\newcommand{\pssliceslant}[2]{{\raisebox{.1em}{$#1$}\mkern-1mu\left/_{\mkern-4.1mu\operatorname{ps}}\right.\raisebox{-.25em}{$#2$}}}
\newcommand{\psslice}[2]{\pssliceslant{#1}{#2}}
\newcommand{\twoBun}[2]{2\catfont{Bun}_{#1}(#2)}
\newcommand{\qst}[2]{[{#1}/{#2}]}
\newcommand{\clst}[1]{\Cal{B}#1}
\newcommand{\Groth}[1]{\int{\hspace{-0.5ex} #1}}
\begin{document}

\title{Principal 2-bundles and quotient 2-stacks}
\author{Elena Caviglia}
\address{School of Computing and Mathematical Sciences, University of Leicester, United Kingdom}
\email{ec363@leicester.ac.uk}
\keywords{quotient stack, principal bundle, bisite, classifying stack}
\subjclass[2020]{18F20, 18F10, 18N10, 14A20}

\begin{abstract}
We generalize principal bundles and quotient stacks to the two-categorical context of bisites. We introduce a notion of principal 2-bundle that makes sense for a 2-category with finite flexible limits, endowed with a bitopology. We then use principal 2-bundles to explicitly construct quotient-pre-2-stacks, which are the analogues of quotient stacks one dimension higher. In order to perform this construction, we prove that principal 2-bundles are closed under iso-comma objects and we restrict ourselves to $(2,1)$-categories. Finally, we prove that, if the bisite is subcanonical and the underlying $(2,1)$-category satisfies some mild conditions, quotient pre-2-stacks are 2-stacks.
\end{abstract}

\maketitle

\setcounter{tocdepth}{1}
\tableofcontents
				
\section*{Introduction}
In this paper, we generalize principal bundles and quotient stacks to the two-categorical context of bisites. This is the generalization to one dimension higher of the theory of generalized principal bundles and quotient stacks that we developed in \cite{Genprinbundquost} in the categorical context of sites. Both these theories will also appear in our PhD thesis \cite{mythesis}.

Principal bundles and quotient stacks are key concepts in geometry and topology. Their importance is due, above all, to their fruitful connections with cohomology groups. Given a nice topological space $X$ and a group $G$, the first cohomology group $H^1(X,G)$ of $X$ with coefficients in $G$ corresponds to the set of isomorphism classes of principal $G$-bundles over $X$. And, as also pointed out by Lurie in \cite{Lurie}, the approach of principal $G$-bundles turns out to be very useful and more effective than the standard one involving cocycles, especially when considering an arbitrary topological space. The collections of principal $G$-bundles over different topological spaces can then be captured and studied together thanks to the classifying stack $\clst{G}$, which is the archetypal quotient stack. 

One of the main motivations that brought us to study categorical generalizations of principal bundles and quotient stacks is the possibility to use such new generalized concepts as tools for the study of cohomology groups in new and broader contexts. We believe that this will have many interesting applications both in mathematics and in mathematical physics.

Principal bundles over topological spaces and over smooth manifolds have been introduced and studied in geometry and topology (see \cite{Milnor}, \cite{Husemoller} and \cite{Steenrod}).  In  \cite{Genprinbundquost}, we produced a categorical generalization of  principal bundles that makes sense in any site, i.e.\ in any category endowed with a Grothendieck topology. The topological group involved in the classical notion of principal bundle is generalized to a group object in the category. And the notion of locally trivial morphism is then internalized by considering pullbacks along the morphisms of a covering family for the Grothendieck topology. This generalized notion of principal bundle recovers the classical notions of principal bundles over topological spaces and smooth manifolds as particular cases.

In recent years, the need of a 2-dimensional notion of principal bundle has appeared in mathematical physics. The particular case of principal bundle over manifolds has been generalized to dimension 2 by Bartels, in their PhD thesis \cite{Bartelsthesis}. They introduced a notion of principal $G$-2-bundle over a smooth manifold, where $G$ is a Lie 2-group and the smooth manifold is seen as a smooth 2-manifold with only identities. The main motivation arose from applications in higher gauge theory; instances of principal 2-bundles of this form can be found in topological quantum field theory and string theory. 

In this paper, we introduce a notion of principal 2-bundle that makes sense in any nice enough 2-category equipped with a two-dimensional Grothendieck topology. More precisely, we use the notion of bitopology introduced by Street in \cite{Streetcharbicatstacks} for bicategories. The internal groups used in the context of sites are here replaced by (coherent) 2-groups and pullbacks are replaced by iso-comma objects. Our notion of principal 2-bundle recovers Bartels' one when considering the 2-category of differentiable categories, i.e.\ categories internal to the category of smooth manifolds, with the covering sieves generated by surjective local diffeomorphisms.  We believe that our general theory of principal 2-bundles could have many applications both in higher gauge theory and in other mathematical contexts. It is also worth noticing that an analogous theory can be developed for bicategories using the same ideas. 

Principal bundles over topological spaces, schemes or differentiable manifolds have also been used to perform an explicit construction of quotient stacks. Instances of this construction can be found in \cite{Neumann}, for the algebraic case, and in \cite{Heinloth}, for the differentiable case. And in \cite{Genprinbundquost}, we performed an analogous construction in the categorical context of sites.  In this paper, we generalize this construction to one dimension higher, producing particular trihomomorphisms from a $(2,1)$-category endowed with a bitopology into the tricategory $\twocat$ of 2-categories. These trihomomorphisms are the analogues of quotient prestacks one dimension higher and will be thus called \predfn{quotient pre-2-stacks}.

Since the classical quotient prestacks are always stacks and the generalized quotient stacks of \cite{Genprinbundquost} are stacks if the site satisfies some mild conditions, it is natural to wonder whether generalized quotient pre-2-stacks satisfy stacky gluing conditions.  
As there was no notion of higher dimensional stack suitable for quotient pre-2-stacks, in our paper \cite{2stacks}, we introduced a notion of 2-stack suitable for a trihomomorphism from a bisite into $\Bicat$. Our definition naturally generalizes the definition of stack given by Street in \cite{Streetcharbicatstacks} for pseudofunctors from a 2-category into $\Cat$. But we also proved a useful characterization (Theorem 3.18 of \cite{2stacks}) in terms of explicit gluing conditions that can be checked more easily in practice. The obtained gluing conditions generalize one dimension higher the usual gluing conditions satisfied by a stack.

Analogously to what we showed in the one-dimensional case in \cite{Genprinbundquost}, we prove that, if the bitopology is subcanonical and the underlying $(2,1)$-category is nice enough, quotient pre-2-stacks are 2-stacks. We thus obtain a notion of quotient 2-stack. This is the main theorem of the paper.

\begin{teorintro}\label{aretwostacksintro}
	Let $\K$ be a bicocomplete $(2,1)$-category with finite flexible limits and such that iso-comma objects preserve bicolimits. Let then $\tau$ be a subcanonical bitopology on $\K$ and $\cX,\cG\in \K$  with $\cG$ an internal 2-group. Then the quotient pre-2-stack $\qst{\cX}{\cG}$ is a 2-stack.
\end{teorintro}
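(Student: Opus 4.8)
The plan is to verify the explicit gluing conditions characterising 2-stacks established in Theorem~3.18 of \cite{2stacks}, following the strategy used for the one-dimensional statement in \cite{Genprinbundquost}. Fix an object $\cU\in\K$ and a covering sieve $R$ on $\cU$ for the bitopology $\tau$. The first step is to unwind what a descent datum for $\qst{\cX}{\cG}$ relative to $R$ is: using the very construction of the quotient pre-2-stack as a trihomomorphism into $\twocat$ — which relies on the closure of principal 2-bundles under iso-comma objects, available since $\K$ has finite flexible limits — a descent datum amounts to a compatible family of principal $\cG$-2-bundles $\{\cP_f\to\dom(f)\}_{f\in R}$ equipped with $\cG$-equivariant morphisms $\cP_f\to\cX$, together with coherence 1-cells, 2-cells and higher coherences recording compatibility along the iso-comma objects of composable arrows in $R$; in other words, a principal $\cG$-2-bundle with equivariant map to $\cX$ \emph{over the sieve} $R$. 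I would make this translation precise.

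The heart of the argument is then to glue such a datum and check that the result is again a principal 2-bundle. Since $\K$ is bicocomplete, set $\cP\deq\bbicolim_{f\in R}\cP_f$. The composites $\cP_f\to\dom(f)\to\cU$ and the maps $\cP_f\to\cX$ assemble, by the universal property of the bicolimit, into morphisms $\cP\to\cU$ and $\cP\to\cX$; and since iso-comma objects — and in particular finite products — preserve bicolimits, the $\cG$-actions on the $\cP_f$ assemble into a $\cG$-action on $\cP$ for which $\cP\to\cX$ is equivariant. To see that $\cP\to\cU$ is a principal $\cG$-2-bundle I would use subcanonicity of $\tau$: it guarantees that the representable trihomomorphisms are 2-stacks, so that $\cU\simeq\bbicolim_{f\in R}\dom(f)$ and that the iso-comma objects $\dom(f)\times^{\cong}_{\cU}\dom(g)$ of arrows in $R$ are computed as expected. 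Writing $\restr{\cP}{f}$ for the iso-comma object of $f$ along $\cP\to\cU$, the preservation of bicolimits by iso-comma objects gives $\restr{\cP}{f}\simeq\bbicolim_{g\in R}\restr{\cP_g}{f}$, and the sieve property together with subcanonicity collapses this bicolimit onto $\cP_f$; hence $\cP$ is locally trivialised by $R$, refined by the trivialisations of the individual $\cP_f$, and is therefore a principal $\cG$-2-bundle. This establishes that every descent datum is effective.

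It remains to check that the comparison from the 2-category of sections $\qst{\cX}{\cG}(\cU)$ to the 2-category of descent data is a biequivalence on 1-cells and 2-cells as well. A morphism of descent data is a compatible family of equivariant morphisms $\cP_f\to\cP'_f$ over $\dom(f)$ respecting the maps to $\cX$; such a family glues, by the universal property of $\cP=\bbicolim_{f\in R}\cP_f$, to a morphism $\cP\to\cP'$, any morphism restricts, and the two operations are mutually inverse up to coherent isomorphism. The same universal property, together with $\K$ being a $(2,1)$-category so that all the 2-cells in sight are invertible, shows that 2-cells of descent data glue uniquely; the joint surjectivity of the bicolimit cocone needed to conclude is once more a consequence of $\cU\simeq\bbicolim_{f\in R}\dom(f)$ and of representables being 2-stacks.

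I expect the main obstacle to be the coherence bookkeeping in the first and third steps: a descent datum for a trihomomorphism into $\twocat$ carries objects, 1-cells and 2-cells together with two further layers of coherence, and one must verify that forming the bicolimit $\cP$ respects all of them — in particular that the equivalences $\restr{\cP}{f}\simeq\cP_f$ are compatible with the coherence cells of the datum. The single geometric input, that the gluing bicolimit of principal 2-bundles is again a principal 2-bundle, is exactly where the hypotheses on $\K$ beyond bicocompleteness — finite flexible limits and the preservation of bicolimits by iso-comma objects — and the subcanonicity of $\tau$ are indispensable: they are what let the local triviality condition survive the passage to the bicolimit.
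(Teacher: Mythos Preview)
Your proposal is correct and follows the same overall strategy as the paper: verify conditions (2C), (M), (O) of the characterisation theorem by expressing the relevant objects as two-dimensional colimits over the sieve and using their universal properties to glue. A few points where the paper's execution differs from yours are worth noting. First, the colimits are \emph{sigma}-bicolimits indexed by the Grothendieck construction $\Groth{R}$, not plain bicolimits over the set $R$; this matters because morphisms in $\Groth{R}$ involve both composition triangles $(g,\id{})$ and 2-cells $(\id{},\alpha)$, and the coherence conditions of a descent datum correspond precisely to cocone compatibility with both types --- the paper handles this via Theorem~2.24 and Proposition~2.26 of \cite{2stacks} (recalled here as Theorem~\ref{teorsigmabicolimbisieves} and Proposition~\ref{coconofstar}). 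Second, for (M) and (2C) the paper does not present the given bundle $\cP$ as the colimit of a descent datum but rather invokes Proposition~\ref{coconofstar} to write $\cP=\sigmabicolim{(\opn{dom}\c\pi_{\cP}^{\ast}\c\opn{inc})}$ directly; your implicit appeal to ``$\cP\simeq\bbicolim_f f^{\ast}\cP$'' amounts to the same thing. Third, for (O) the paper proves 2-local triviality of the glued bundle $\cW$ \emph{before} constructing the equivalences $\psi^f\colon\cW_f\simeq f^{\ast}\cW$, by exhibiting a refined covering sieve $S'$ on which the iso-comma of $\pi$ is trivially computed; it then builds $\psi^f$ by writing $f^{\ast}\cW$ as a sigma-bicolimit and producing an explicit cocone into $\cW_f$. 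Your route --- establish $f^{\ast}\cP\simeq\cP_f$ first and read off local triviality --- is conceptually cleaner, but the ``collapsing'' step $\bbicolim_{g}\restr{\cP_g}{f}\simeq\cP_f$ is exactly the content of that explicit cocone construction and is where most of the coherence work you anticipate actually lives. Finally, the paper invokes Mesiti's calculus of colimits in lax slices \cite{Mesiticolimits} to identify the glued map $\pi\colon\cW\to\cY$ with $\sigmabicolim{\Lambda}$ in $\psslice{\K}{\cY}$; you should expect to need the same input.
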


To prove this important result, we use the explicit characterization of 2-stack of our paper \cite{2stacks}, as well as the calculus of colimits in two-dimensional slices developed by Mesiti in \cite{Mesiticolimits} and \cite{Mesitithesis}. 
 
A key result needed for the proof of Theorem \ref{aretwostacksintro} is that every object of a subcanonical bisite can be expressed as some kind of two-dimensional colimit of each covering bisieve over it. We proved this result in \cite{2stacks}.

Quotient stacks are a fundamental concept in algebraic geometry because many stacks that naturally arise in geometric contexts either are quotient stacks or can be constructed using quotient stacks as building blocks. For this reason, we believe that both the generalized quotient stacks of \cite{Genprinbundquost} and the quotient 2-stacks studied in this paper could have many useful and interesting applications. 

Furthermore, our categorical generalizations of principal bundles and quotient stacks could give a substantial contribution towards the development of a cohomology theory of schemes, and more in general of stacks, with coefficients in stacks of abelian 2-groups. This theory would produce new and refined 2-categorical invariants associated to schemes and algebraic stacks, that could solve numerous open problems in algebraic geometry.

\subsection*{Outline of the paper}
In section \ref{sec2bundles}, we introduce and study principal 2-bundles. We start by recalling the notions of equivariant morphisms and 2-cells with respect to actions of internal (coherent) 2-groups. We then construct an action on the iso-comma object of two equivariant morphisms. Moreover, we introduce the notion of \predfn{2-locally trivial morphism} in a bisite. We use this notion to define \predfn{principal 2-bundles} and cells between them (Definition \ref{2bundle}). We conclude the section by proving the key result that principal 2-bundles are stable under iso-comma objects (Proposition \ref{comma2bun}).

In section \ref{secquo2stacks}, we use principal 2-bundles to explicitly construct quotient pre-2-stacks, restricting ourselves to the case of a $(2,1)$-category. We then prove that quotient pre-2-stacks are trihomomorphism into $\twocat$. Finally, we prove the main result of the paper, which is that, if the bisite is subcanonical and the underlying $(2,1)$-category satisfies mild assumptions, our quotient pre-2-stacks are 2-stacks (Theorem \ref{aretwostacks}).
 
\section{Principal 2-bundles}\label{sec2bundles}

In this section we introduce a notion of {principal 2-bundle} in the context of bisites. This notion generalizes the notion of 2-space studied by Bartels in their PhD thesis \cite{Bartelsthesis}.

Throughout this section $\K$ will be a 2-category with all finite flexible limits and $\tau$ will be a Grothendieck bitopology on it. The notion of bitopology used here is the one introduced by Street in \cite{Streetcharbicatstacks} (and recalled also in our paper \cite{2stacks}).

\begin{notazione}
	Throughout this chapter the objects of $\K$ will be denoted by calligraphic letters instead of the usual capital block letters. This is coherent with the choice of other authors using the same concepts. And it is also helpful when comparing the notions presented here to the one-dimensional ones of \cite{Genprinbundquost}.
\end{notazione}

\begin{oss}
	In Section \ref{secquo2stacks}, we will restrict to the case in which $\K$ is a \linebreak[4] $(2,1)$-category as the invertibility of 2-cells is required to construct the higher dimensional analogues of quotient stacks.
\end{oss}

\begin{oss}[iso-comma objects]
	Since $\K$ has all flexible limits (in the sense of \cite{BirdKellyPowerStreet}), it has in particular all iso-comma objects. If the iso-comma objects exists, it is a special representative of the bi-iso comma objects. For this reason, we will formulate some definitions that are bicategorical in nature and naturally involve bi-iso comma objects using iso-comma objects instead. This will offer substantial simplifications in the calculations. The same applies to biproducts and products.
\end{oss}
We aim at introducing a notion of principal 2-bundle that makes sense in $\K$. In this two-dimensional setting instead of an internal group we will consider an internal 2-group. This concept has been introduced by Baez and Lauda in \cite{BaezLauda}, but the idea had already been presented by Joyal and Street in \cite{StreetJoyal}. 

An internal 2-group in the 2-category $\K$ is an object of $\K$ endowed with a multiplication, a neutral element and an inverse morphism that satisfy associativity and unitality only up to fixed coherent 2-isomorphisms. We refer the reader to \cite{BaezLauda} for a complete definition.

We know recall the notion of action of an internal 2-group $\cG\in \K$ on an object $\cX\in \K$. This definition can be found in \cite{Bartelsthesis}.

\begin{defne}\label{action2group}
Let $\cG$ be an internal $2$-group in $\K$ and let $\cX$ be an object of $\K$. An \dfn{action} of $\cG$ on $\cX$ is given by a morphism
$$x\: \cG \x \cX \to \cX$$
together with invertible 2-cells
\begin{eqD*}
\csq*[l][7][7][\mu_x]{\cG \x \cG \x \cX}{\cG \x \cX} {\cG \x \cX} {\cX} {\id{\cG}\x x}{m\x \id{\cX}}{x}{x}
\qquad \text{and} \qquad
\begin{cdN}
	{\cX} \arrow[r,"{e\x \id{\cX}}"] \arrow[rd,"{}",equal,""{name=A}]\& {\cG \x \cX} \arrow[to=A,"{\nu_x}",Rightarrow ,shorten <=0.5ex, shorten >= 0.5ex] \arrow[d,"{x}"] \\
	{} \& {\cX}
\end{cdN}
\end{eqD*}
such that the following equalities of 2-cells hold:
\begin{samepage}
\begin{eqD*}
	\begin{cdN}
		{\cG \x \cG \x \cG \x \cX} \arrow[rd,"{\id{\cG}\x m\x \id{\cX}}"] \arrow[d,"{\id{\cG}\x m\x \id{\cX}}"'] \arrow[rr,"{\id{\cG}\x \id{\cG}\x x}"] \&[-4ex] {} \&[-4ex] {\cG \x \cG \x \cX} \arrow[rd,"{\id{\cG}\x x}"] \arrow[ld,"{\id\x \mu_{x}}", Rightarrow ,shorten <=2.5ex, shorten >= 2.5ex]\& {} \\
		{\cG \x \cG \x \cX} \arrow[rd,"{m\c \id{\cX}}"'] \& {\cG \x \cG \x \cX} \arrow[l,"{\alpha_m\x \id{\cX}}",Rightarrow ,shorten <=0.5ex, shorten >= 0.5ex] \arrow[d,"{m\x \id{\cX}}"] \arrow[rr,"{\id{\cG}\x x}"] \& {} \& {\cG \x \cX} \arrow[d,"{x}"] \arrow[lld,"{\mu_x}",Rightarrow ,shorten <=8.5ex, shorten >= 8.5ex] \\
		{} \& {\cG \x \cX} \arrow[rr,"{x}"'] \& {} \& {\cG}
	\end{cdN}
\end{eqD*}
\begin{eqD}{action1}
	\vspace*{-1.8ex}\hspace*{5.7cm}
	\begin{cdsN}{3}{3}
		\hphantom{.}\arrow[d,equal]\\
		\hphantom{.}
	\end{cdsN}\hspace{5.4cm}
\end{eqD}
\begin{eqD*}
	\begin{cdN}
		{\cG \x \cG \x \cG \x \cX} \arrow[d,"{\id{\cG}\x m\x \id{\cX}}"'] \arrow[rr,"{\id{\cG}\x \id{\cG}\x x}"] \& {} \&[-4ex]{\cG \x \cG \x \cX} \arrow[rd,"{\id{\cG}\x x}"] \arrow[d,"{m\x \id{\cX}}"]\& {} \\
		{\cG \x \cG \x \cX} \arrow[rr,"{\id{\cG}\x x}"]\arrow[rd,"{m\c \id{\cX}}"'] \& {} \& {\cG \x \cX} \arrow[ld,"{\mu_x}",Rightarrow ,shorten <=2.5ex, shorten >= 2.5ex] \arrow[rd,"{x}"] \& {\cG \x \cX} \arrow[l,"{\mu_x}",Rightarrow ,shorten <=2.5ex, shorten >= 2.5ex]  \arrow[d,"{x}"] \\
		{} \& {\cG \x \cX} \arrow[rr,"{x}"'] \& {} \& {\cG}
	\end{cdN}
\end{eqD*}
\end{samepage}
\vspace{5.5mm}
and
\vspace{5.5mm}
\begin{samepage}
	\begin{eqD*}
		\begin{cdN}
			{\cG\x \cX} \arrow[r,"{\id{\cG}\x \id{\cG} \x x}"] \arrow[rd,"{}",equal,""{name=A}] \& {\cG\x \cG \x \cX} \arrow[to=A,"{\alpha_e\x \id{\cX}}"{inner sep=0.001ex},Rightarrow ,shorten <=1ex, shorten >= 1ex,shift left=-0.51ex]\arrow[d,"{m\x \id{\cX}}"] \arrow[r,"{\id{\cG}\x x}"] \& {\cG \x \cX} \arrow[ld,"{\mu_x}",Rightarrow ,shorten <=4.5ex, shorten >= 4.5ex] \arrow[d,"{x}"]\\
			{} \& {\cG \x \cX} \arrow[r,"{x}"']\& {\cX} 
		\end{cdN}
	\end{eqD*}
	\begin{eqD}{action2}
		\vspace*{-1.8ex}\hspace*{5.7cm}
		\begin{cdsN}{3}{3}
			\hphantom{.}\arrow[d,equal]\\
			\hphantom{.}
		\end{cdsN}\hspace{5.4cm}
	\end{eqD}
	\begin{eqD*}
		\begin{cdN}
			{\cG\x \cX} \arrow[r,"{\id{\cG}\x \id{\cG} \x x}"] \arrow[rd,"{}",equal] \& {\cG\x \cG \x \cX} \arrow[d,"{\id\x \nu_x}",Rightarrow ,shorten <=1.5ex, shorten >= 1.5ex] \arrow[rd,"{}",equal]  \arrow[r,"{\id{\cG}\x x}"] \& {\cG \x \cX} \arrow[d,"{x}"]\\
			{} \& {\cG \x \cX} \arrow[r,"{x}"']\& {\cX} 
		\end{cdN}
	\end{eqD*}
\end{samepage}
where $\alpha_m$ and $\alpha_e$ are coherence 2-cells associated to the multiplication and the neutral element of $\cG$ (see \cite{BaezLauda}).
\end{defne}

\begin{oss}
	For the action of an internal 2-group the usual axioms of action are true only up to coherent isomorphisms. These isomorphisms are $\mu_x$ and $\nu_x$ and their coherences are expressed by the equalities (\ref{action1}) and (\ref{action2}).
\end{oss}

We now introduce the notions of \predfn{$\cG$-equivariant morphism} and 
\predfn{$\cG$-equivariant 2-cell} in $\K$. These notions are well-known, but we could not find a reference for them in the literature.

\begin{defne}[$\cG$-equivariant morphism] 
	Let $\cG$ be an internal 2-group in $\K$ that acts on  the objects $\cX$ and $\cY$ of $\K$ with actions $x\: \cG\x \cX \to \cX$ and $y\: \cG\x \cY \to Y$ respectively. A morphism $f\: \cX \to \cY$ is said \dfn{$\cG$-equivariant} if there exists an invertible 2-cell 
	\csq[l][7][7][\lambda_f]{\cG \x \cX}{\cX} {\cG \x \cY} {\cY} {x}{\id{\cG}\x f}{f}{y}
	such that the following equalities hold:
\begin{eqD*}
	\scalebox{0.9}{
		\begin{cdN}
			{} \& {\cG \x \cG \x \cX}  \arrow[d,"{m\x \id{\cX}}"']\arrow[ld,"{\id{\cG} \x \id{\cG} \x f}"',bend right=30]  \arrow[r,"{\id{\cG}\x x}"] \& {\cG \x \cX} \arrow[d,"{x}"]  \arrow[ld,"{\mu_x}",Rightarrow ,shorten <=4.5ex, shorten >= 4.5ex]\\
			{\cG \x\cG \x \cY}  \arrow[rd,"{m\x \id{\cY}}"', bend right=35] \& {\cG \x \cX} \arrow[r,"{x}"'] \arrow[d,"{\id{\cG}\x f}"']\& {\cX} \arrow[d,"{f}"] \arrow[ld,"{\lambda_f}",Rightarrow ,shorten <=4.5ex, shorten >= 4.5ex]\\
			{} \& {\cG \x \cY} \arrow[r,"{y}"']\& {\cY} 
		\end{cdN}
		\h[20] = \h[3]
		\begin{cdN}
			{\cG \x \cG \x \cX} \arrow[d,"{\id{\cG} \x \id{\cG} \x f}"']  \arrow[r,"{\id{\cG}\x x}"] \& {\cG \x \cX} \arrow[d,"{\id{\cY}\x f}"] \arrow[rd,"{x}", bend left=30] \arrow[ld,"{\id{\cG} \x \lambda_f}",Rightarrow ,shorten <=4.5ex, shorten >= 4.5ex]\& {} \\
			{\cG \x \cG \x \cY} \arrow[r,"{\id{\cG}\x y}"'] \arrow[d,"{m\x \id{\cY}}"'] \& {\cG \x \cY} \arrow[ld,"{\mu_y}",Rightarrow ,shorten <=4.5ex, shorten >= 4.5ex] \arrow[d,"{y}"]\& {\cX} \arrow[l,"{\lambda_f}",Rightarrow ,shorten <=1.5ex, shorten >= 1.5ex] \arrow[ld,"{f}",bend left=35] \\
			{\cG \x \cY} \arrow[r,"{y}"'] \& {\cY} \& {} 
	\end{cdN}}
\end{eqD*}

and 

\begin{eqD*}
	\scalebox{0.99}{
		\begin{cdN}
			{\cT \x \cX} \arrow[d,"{\id{\cT}\x f}"'] \arrow[r,"{e\x \id{\cX}}"] \& {\cG \x \cX} \arrow[d,"{\id{\cG}\x f}"']\arrow[r,"{x}"] \& {\cX} \arrow[ld,"{\lambda_f}",Rightarrow ,shorten <=3.5ex, shorten >= 3.5ex] \arrow[d,"{f}"]\\
			{\cT \x \cY} \arrow[rr,"{\pr{2}}"', bend right=40,""{name=A,pos=0.55}] \arrow[r,"{e \x \id{\cY}}"'] \& {\cG \x \cY} \arrow[to=A,"{\nu_y}",Rightarrow ,shorten <=0.5ex, shorten >= 0.5ex]\arrow[r,"{y}"']\& {\cY} 
		\end{cdN}
		\qquad = \qquad
		\begin{cdN}
			{\cT \x \cX} \arrow[rr,"{\pr{2}}"', bend right=40,""{name=B,pos=0.55}] \arrow[d,"{\id{\cT}\x f}"']  \arrow[r,"{e\x \id{\cX}}"] \& {\cG \x \cX} \arrow[to=B,"{\nu_x}",Rightarrow ,shorten <=0.5ex, shorten >= 0.5ex] \arrow[r,"{x}"]\& {\cX}  \arrow[d,"{f}"]\\
			{\cT \x \cY} \arrow[rr,"{\pr{2}}"', bend right=40,""{name=C,pos=0.5}] \& {} \& {\cY} 
	\end{cdN}}
\end{eqD*}
\end{defne}

\begin{oss}
	Notice that every morphism of $\K$ is $\cG$-equivariant when the source and the target are equipped with trivial actions of $\cG$. Having this in mind, we will always think of the objects of $\K$ as equipped with trivial actions of $\cG$ when the action is not specified. 
\end{oss}

\begin{defne}[$\cG$-equivariant 2-cell]
		Let $\cG$ be an internal 2-group in $\K$ that acts on  the objects $\cX$ and $\cY$ of $\K$ with actions $x\: \cG\x \cX \to \cX$ and $y\: \cG\x \cY \to Y$ respectively. Let $f,g\: \cX \to \cY$ be $\cG$-equivariant morphisms. A 2-cell $\gamma\: f \Rightarrow g$ is said \dfn{$\cG$-equivariant} if the following equality holds:
		\begin{eqD*}
			\scalebox{1}{
				\begin{cdN}
					{\cG \x \cX} \arrow[d,"{\id{\cG}\x g}"'] \arrow[r,"{x}"] \& {\cX} \arrow[ld,"{\lambda_g}",Rightarrow ,shorten <=3.5ex, shorten >= 3.5ex] \arrow[d,"{g}"',""{name=B,pos=0.53}] \arrow[d,"{f}", bend left=70 ,""{name=A}]\\
					{\cG \x \cY} \arrow[from=A,to =B,"{\gamma}",Rightarrow ,shorten <=0.5ex, shorten >= 0.001ex] \arrow[r,"{y}"] \& {\cY}
				\end{cdN}
				\h[10] = \h[2]
				\begin{cdN}
					{\cG \x \cX} \arrow[r,"{x}"]  \arrow[d,"{\id{\cG}\x f}",""{name=C}] \arrow[d,"{\id{\cG}\x g}"',""{name=D}, bend right=70] \arrow[from=C,to=D,"{\id{\cG}\x \gamma}"',Rightarrow ,shorten <=0.9ex, shorten >= 0.7ex]\& {\cX} \arrow[d,"{f}"] \arrow[ld,"{\lambda_f}",Rightarrow ,shorten <=3.5ex, shorten >= 3.5ex] \\
					{\cG \x \cY} \arrow[r,"{y}"] \& {\cY}
			\end{cdN}}
		\end{eqD*}
	\end{defne}

The following result might be well-known, but we could not find a reference for it. It generalizes  a well known fact about group objects to one dimension higher.

\begin{prop}\label{pseudomonad}
	Let $\cG$ in $\K$ be an internal 2-group. The pseudofunctor 
	$$\cG \x -\: \K \to \K$$
	 is a \dfn{pseudomonad} with multiplication given by $m\x \id{-}$ and identity given by $e\x \id{-}$. 
	 
	 Moreover, the pseudoalgebras for this pseudomonad are exactly the objects of $\K$ equipped by an action of $\cG$;  the pseudomorphisms between pseudoalgebras are exactly the $\cG$-equivariant morphisms and the pseudotransformations between them are exactly the $\cG$-equivariant 2-cells. 
\end{prop}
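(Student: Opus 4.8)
The plan is to realise $\cG\x-$ as the ``action pseudomonad'' attached to the pseudomonoid $\cG$, in exact analogy with the one-dimensional fact that a monoid $M$ in a monoidal category $\V$ induces the monad $M\otimes-$ whose algebras are the left $M$-modules. Since $\K$ has finite flexible limits it is in particular cartesian, so $(\K,\x,1)$ is a cartesian monoidal 2-category, and an internal $2$-group is in particular a pseudomonoid in it: the data $m$, $e$, $\alpha_m$ together with the unit $2$-isomorphisms satisfy the pentagon and triangle coherences. For the pseudomonad structure, and indeed for the whole correspondence, only this pseudomonoid part of $\cG$ is used; the invertibility of objects (the genuine ``$2$-group'' part) plays no role, because the action axioms of Definition \ref{action2group} never mention the inverse morphism.

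First I would assemble the pseudomonad data. Fixing a system of binary products, $T\deq\cG\x-\:\K\to\K$ is a (strict) $2$-functor; the endofunctor $T^2$ sends $\cX$ to $\cG\x(\cG\x\cX)$, which is canonically isomorphic to $(\cG\x\cG)\x\cX$ via the associativity constraint of $\x$, and $1\x-\iso\id{\K}$. I would then take the multiplication of the pseudomonad to be $m\x\id{-}\:T^2\Rightarrow T$ (precomposed with that associativity constraint) and the unit to be $e\x\id{-}\:\id{\K}\Rightarrow T$. The invertible modifications witnessing associativity and unitality of the pseudomonad are built from $\alpha_m\x\id{-}$ and from the unitors of $\cG$ tensored with identities, together with the coherence isomorphisms of $-\x-$. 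The two coherence axioms of a pseudomonad — the associativity ``pentagon'' for $m\x\id{-}$ and the ``triangle'' comparing $m\x\id{-}$ with $e\x\id{-}$ — then reduce, after absorbing the coherence isomorphisms of the cartesian structure, to the pentagon and triangle identities of the pseudomonoid $\cG$. This reduction is the main obstacle: it is not deep, but it requires careful bookkeeping of the associativity and unit constraints of $\x$, and it is cleanest carried out by invoking coherence for monoidal $2$-categories (equivalently, for pseudomonoids), which lets one manipulate these constraints freely.

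Next I would identify the pseudoalgebras. Unwinding the definition of a pseudoalgebra for $T=\cG\x-$ yields precisely: an object $\cX$, a morphism $x\:\cG\x\cX\to\cX$, an invertible $2$-cell relating $x\c(m\x\id{\cX})$ with $x\c(\id{\cG}\x x)$, an invertible $2$-cell relating $\id{\cX}$ with $x\c(e\x\id{\cX})$, and two coherence equations. Comparing with Definition \ref{action2group}, these are exactly an action $x$, the $2$-cells $\mu_x$ and $\nu_x$, and the equalities \refd{action1} and \refd{action2}; the only adjustment is rewriting the associator $\alpha_m$ occurring in \refd{action1} in terms of the associativity modification of the pseudomonad, which is the same bookkeeping as in the previous step. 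Hence ``pseudoalgebra structure on $\cX$'' and ``action of $\cG$ on $\cX$'' are literally the same data.

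Finally I would treat the $1$- and $2$-cells. A pseudomorphism of $T$-algebras $(\cX,x)\to(\cY,y)$ is a morphism $f\:\cX\to\cY$ with an invertible $2$-cell comparing $y\c(\id{\cG}\x f)$ with $f\c x$, subject to two coherence equations — one against the multiplication, one against the unit — which match term for term the $2$-cell $\lambda_f$ and the two equalities in the definition of a $\cG$-equivariant morphism. Likewise an algebra $2$-cell between two such pseudomorphisms is a $2$-cell $\gamma\:f\Rightarrow g$ satisfying the single compatibility equation with $\lambda_f$ and $\lambda_g$, which is precisely the defining equation of a $\cG$-equivariant $2$-cell. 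Assembling these identifications gives an isomorphism of $2$-categories between the $2$-category of pseudoalgebras, pseudomorphisms and their $2$-cells for $\cG\x-$ and the $2$-category of objects-with-$\cG$-action, $\cG$-equivariant morphisms and $\cG$-equivariant $2$-cells — which is the content of the statement. Beyond the coherence bookkeeping flagged in the first two steps, everything here is a direct unwinding of definitions.
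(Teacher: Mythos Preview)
Your proposal is correct and takes essentially the same approach as the paper: both argue that the pseudomonad structure and the identification of pseudoalgebras, pseudomorphisms, and algebra $2$-cells follow directly by unwinding the definitions of internal $2$-group, action, and $\cG$-equivariance. Your write-up is considerably more explicit than the paper's (which simply declares each step ``straightforward''), and your observation that only the pseudomonoid part of $\cG$ is used --- framing the construction as the action pseudomonad attached to a pseudomonoid in a monoidal $2$-category --- is a genuine clarification, but it is an elaboration of the same argument rather than a different route.
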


\begin{proof}
	The fact that $\cG \x -\: \K \to \K$ is a pseudomonad easily follows from the definition of internal 2-group. It is,then, straightforward to prove that the pseudoalgebras for this pseudomonad are exactly the objects of $\K$ equipped by an action of $\cG$ using the definition of action. Finally, the facts that pseudomorphisms between pseudoalgebras are exactly the $\cG$-equivariant morphisms and that the pseudotransformations between them are exactly the $\cG$-equivariant 2-cells simply follow by the definitions of $\cG$-equivariant morphisms and $\cG$-equivariant 2-cell respectively.
\end{proof}

We now describe how to define an action of $\cG$ on the iso-comma object of two morphisms in $\K$, given actions of $\cG$ on the sources of the morphisms. This action will be largely used throughout the chapter.

\begin{costr} \label{commact}
	Let $\cG$ be an internal 2-group of $\K$ that acts on $\cP\in \K$ with action $p\: \cG\x \cP\to \cP$, on $\cY\in \K$ with action $y\: \cG\x \cY \to \cY$ and on $\cZ\in\K$ with action $z\: \cG\x \cZ \to \cZ$ and let $f:\cP\to \cY$ and $g\: \cZ\to \cY$ be $\cG$-equivariant morphisms. 
	
	Consider, then, the iso-comma object square
	\csq[l][7][7][\alpha^{f,g}]{\cP\x[\cY]\cZ}{\cP}{\cZ}{\cY}{f\st g}{g\st f}{f}{g} 
	We want to define an action of $\cG$ on the iso-comma object $\cP\x[\cY]\cZ$ and we can do this using the morphism $\psi\: \cG\x (\cP\x[\cY]\cZ) \to \cP\x[\cY]\cZ$ induced by the universal property of $\cP\x[\cY]\cZ$ as in the following diagram
	\begin{eqD*}
		\commaunivvN{\cG\x(\cP\x[\cY]\cZ)}{p\c(\id{\cG}\x f\st g)}{z \hspace{0.1ex}\c (\id{\cG}\x g\st f)}{\psi}{\cP\x[\cY]\cZ}{\cP}{\cZ}{\cY}{f\st g}{g\st f}{f}{g}{\alpha^{f,g}}
		\qquad= \quad
		\begin{cdN}
			{\cG\x(\cP\x[\cY]\cZ)} \arrow[d,"{z \hspace{0.1ex}\c (\id{\cG}\x g\st f)}"'] \arrow[r,"{p\c(\id{\cG}\x f\st g)}"] \&[6ex] {\cP} \arrow[d,"{f}"] \arrow[ld,"{\gamma}",twoiso ,shorten <=8.5ex, shorten >= 8.5ex] \\
			{\cZ} \arrow[r,"{g}"'] \&  {\cY} 
		\end{cdN}
	\end{eqD*}
	where the isomorphic 2-cell $\gamma$  is given by the following pasting diagram
		\begin{cd}
		\cG\x(\cP\x[\cY]\cZ) \arrow[r,"\id{\cG}\x f\st g"]  \arrow[d,"\id{\cG}\x g\st f"'] \& \cG\x \cP \arrow[ld,"{\id \x \alpha^{f,g}}",Rightarrow ,shorten <=3.5ex, shorten >= 3.5ex] \arrow[r,"p"] \arrow[d,"\id{\cG}\x f"] \& \cP \arrow[d,"f"] \arrow[ld,"{\lambda_f}",Rightarrow ,shorten <=3.5ex, shorten >= 3.5ex]\\
		\cG\x \cZ \arrow[r,"\id{\cG} \x g"] \arrow[d,"z"'] \& \cG\x \cY  \arrow[r,"y"] \& \cY \arrow[d,equal,""] \arrow[lld,"{{\lambda_g}^{-1}}",Rightarrow ,shorten <=12.5ex, shorten >= 12.5ex]\\
		\cZ \arrow[rr,"g"'] \& \& \cY
	\end{cd}
\end{costr}

\begin{prop}
	The morphism $\psi\: \cG\x (\cP\x[\cY]\cZ) \to \cP\x[\cY]\cZ $ defined in Construction~\ref{commact} is an action of $\cG$ on $\cP\x[\cY]\cZ$.
\end{prop}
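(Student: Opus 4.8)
The plan is to avoid verifying the action axioms by hand and instead to deduce everything from Proposition~\ref{pseudomonad}. Since $\cG\x-\:\K\to\K$ is a pseudomonad and the iso-comma object $\cP\x[\cY]\cZ$ is a flexible (indeed PIE-) limit, the $2$-category of pseudoalgebras, pseudomorphisms and pseudotransformations for $\cG\x-$ has all iso-comma objects and the forgetful $2$-functor to $\K$ creates them; this is the pseudomonadic version of the standard creation-of-flexible-limits theorem for $2$-monads, and applies after strictifying the pseudomonad. By Proposition~\ref{pseudomonad}, the $\cG$-equivariant morphisms $f\:\cP\to\cY$ and $g\:\cZ\to\cY$ are precisely pseudomorphisms of pseudoalgebras $(f,\lambda_f)$ and $(g,\lambda_g)$, so their iso-comma object in the $2$-category of pseudoalgebras exists, its underlying object is $\cP\x[\cY]\cZ$, and it carries a canonical pseudoalgebra structure, i.e.\ a canonical action of $\cG$. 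It then remains to identify this canonical action with $\psi$.

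For that identification one uses that the pseudoalgebra structure created on a flexible limit is the unique $1$-morphism induced, through the universal property of the underlying limit in $\K$, by the corresponding limiting diagram. Concretely, the created action on $\cP\x[\cY]\cZ$ is the unique morphism $\cG\x(\cP\x[\cY]\cZ)\to\cP\x[\cY]\cZ$ whose composite with $f\st g$ is the leg produced from the action $p$ and the equivariance $\lambda_f$ of $f$, whose composite with $g\st f$ is the leg produced from the action $z$ and the equivariance $\lambda_g$ of $g$, and which is compatible with $\alpha^{f,g}$. Unwinding the definition of pseudomorphism of pseudoalgebras shows that these legs are exactly $p\c(\id{\cG}\x f\st g)$ and $z\c(\id{\cG}\x g\st f)$, and that the coherence $2$-cell over $\cY$ that controls the compatibility is exactly the pasting $\gamma$ written down in Construction~\ref{commact}. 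Hence the created action coincides with $\psi$, so $\psi$ is an action.

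As a by-product, this argument supplies the coherence $2$-cells $\mu_\psi$ and $\nu_\psi$ witnessing that $\psi$ is an action, together with the required equalities \eqref{action1} and \eqref{action2}. In the hands-on reformulation, one defines $\mu_\psi$ (resp.\ $\nu_\psi$) by the universal property of $\cP\x[\cY]\cZ$, using that a $2$-cell with target an iso-comma object is determined by its two legs to $\cP$ and $\cZ$ together with a compatibility with $\alpha^{f,g}$; the two legs are taken to be $\mu_p$ and $\mu_z$ (resp.\ $\nu_p$ and $\nu_z$), and the compatibility is checked by pasting $\lambda_f$, $\lambda_g^{-1}$ and the associativity/unit coherence $2$-cells $\alpha_m,\alpha_e$ of $\cG$. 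Then \eqref{action1} and \eqref{action2} are equalities of parallel $2$-cells into $\cP\x[\cY]\cZ$, so by the $2$-dimensional part of the universal property (two such $2$-cells agree as soon as their legs to $\cP$ and to $\cZ$ agree) they reduce to the corresponding equalities for $p$ and for $z$, which hold by hypothesis.

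The only genuine work is the compatibility checks: showing that the induced legs glue along $\alpha^{f,g}$ (for $\psi$ itself, which is already done in Construction~\ref{commact}) and that $\mu_p,\mu_z$ (resp.\ $\nu_p,\nu_z$) glue along $\gamma$ (for $\mu_\psi$, $\nu_\psi$). These are diagram chases combining the two equivariance axioms for $f$ and $g$ with the coherence of the internal $2$-group $\cG$ and with the defining pasting of $\gamma$. I expect the identity for $\mu_\psi$ to be the main obstacle, since it is where the associativity coherence $\alpha_m$ of $\cG$ and the full strength of the first equivariance equation enter; on the conceptual side, the one point worth spelling out is that iso-comma objects really are flexible limits and that the creation theorem applies to pseudomonads, which can be either cited or absorbed into the direct computation.
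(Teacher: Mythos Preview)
Your hands-on reformulation in the third and fourth paragraphs is essentially the paper's proof: the paper induces $\mu_\psi$ and $\nu_\psi$ from the pairs $(\mu_p,\mu_z)$ and $(\nu_p,\nu_z)$ via the two-dimensional universal property of the iso-comma object, notes that compatibility follows from the $\cG$-equivariance of $f$ and $g$ together with the definition of $\psi$, and then states that the coherence axioms \eqref{action1} and \eqref{action2} reduce to those for $p$ and $z$.

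Your primary conceptual route via Proposition~\ref{pseudomonad}, however, has a gap that the paper itself flags. The creation-of-PIE-limits theorem you invoke (Blackwell--Kelly--Power) is proved for the $2$-category of \emph{strict} algebras with pseudomorphisms, not for pseudoalgebras; and by Proposition~\ref{pseudomonad} a $\cG$-action is a \emph{pseudo}algebra for $\cG\times-$, since the action axioms hold only up to the invertible $2$-cells $\mu_x,\nu_x$. Saying ``applies after strictifying the pseudomonad'' does not close this: even for the strictified $2$-monad one still has to pass from pseudoalgebras to strict algebras, an additional coherence step that is not automatic and that you do not supply. The paper makes exactly this point in the remark following Proposition~\ref{liftsbiisocomma}, where the closely related lifting of bi-iso-comma objects is said to require an explicit proof because the Blackwell--Kelly--Power result ``is not strong enough for us as we cannot restrict to strict algebras''. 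So the conceptual shortcut would need substantial further justification, and the paper deliberately opts for the direct computation that you describe as your secondary approach.
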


\begin{proof}
	We need to prove that there exist coherent invertible 2-cells 
	\begin{eqD*}
		\csq*[l][7][7][\mu_\psi]{\cG \x \cG \x (\cP\x[\cY] \cZ)}{\cG \x (\cP\x[\cY] \cZ)} {\cG \x (\cP\x[\cY] \cZ)} {\cP\x[\cY] \cZ} {\id{\cG}\x \psi}{m\x \id{(\cP\x[\cY] \cZ)}}{\psi}{\psi}
		\qquad \text{and} \qquad
		\begin{cdN}
			{\cP\x[\cY] \cZ} \arrow[r,"{e\x \id{(\cP\x[\cY] \cZ)}}"] \arrow[rd,"{}",equal,""{name=A}]\& {\cG \x (\cP\x[\cY] \cZ)} \arrow[to=A,"{\nu_\psi}",Rightarrow ,shorten <=0.5ex, shorten >= 0.5ex] \arrow[d,"{\psi}"] \\
			{} \& {\cP\x[\cY] \cZ}
		\end{cdN}
	\end{eqD*}
We induce $\mu_\psi$ and $\nu_\psi$ using the two-dimensional universal property of the iso-comma object $\cP\x[\cY] \cZ$. 

To induce $\mu_\psi$ we consider the following isomorphic 2-cells:
\begin{cd}
	{\cG\x \cG\x (\cP\x[\cY]\cZ)} \arrow[rr,"\id{\cG}\x \psi"] \arrow[ddd,"m\x \id{\cP\x[\cY]\cZ}"'] \arrow[rd,"\id{\cG}\x(\id{\cG}\x f\st g)"]\& \& {\cG\x (\cP\x[\cY]\cZ)} \arrow[r,"\psi"] \arrow[d,"\id{\cG}\x f\st g"]\& {\cP\x[\cY]\cZ} \arrow[ddd,"f\st g"] \\
	\& {\cG\x \cG\x \cP} \arrow[r,"\id{\cG}\x p"] \arrow[d,"m\x \id{\cP}"]\& {\cG\x \cP} \arrow[rdd,"p"] \arrow[ld,"{\mu_p}",Rightarrow ,shorten <=4.5ex, shorten >= 4.5ex, shift left=2ex] \& {} \\
	\& {\cG\x \cP}\arrow[rrd,"p"]\& \& \\
	{\cG\x (\cP\x[\cY]\cZ)} \arrow[ru,"\id{\cG}\x f\st g"] \arrow[r,"\psi"'] \&{\cP\x[\cY]\cZ} \arrow[rr,"f\st g"'] \& \& {\cP}  
\end{cd}
and 
\begin{cd}
	{\cG\x \cG\x (\cP\x[\cY]\cZ)} \arrow[rr,"\id{\cG}\x \psi"] \arrow[ddd,"m\x \id{\cP\x[\cY]\cZ}"'] \arrow[rd,"\id{\cG}\x(\id{\cG}\x g \st f)"]\& \& {\cG\x (\cP\x[\cY]\cZ)} \arrow[r,"\psi"] \arrow[d,"\id{\cG}\x g \st f"]\& {\cP\x[\cY]\cZ} \arrow[ddd,"g \st f"] \\
	\& {\cG\x \cG\x \cP} \arrow[r,"\id{\cG}\x z"] \arrow[d,"m\x \id{\cZ}"]\& {\cG\x \cZ} \arrow[ld,"{\mu_z}",Rightarrow ,shorten <=4.5ex, shorten >= 4.5ex, shift left=2ex] \arrow[rdd,"z"] \&  \\
	\& {\cG\x \cZ}\arrow[rrd,"z"]\& \& \\
	{\cG\x (\cP\x[\cY]\cZ)} \arrow[ru,"\id{\cG}\x g \st f"] \arrow[r,"\psi"'] \&{\cP\x[\cY]\cZ} \arrow[rr,"g \st f"'] \& \& {\cZ}  
\end{cd}
Using the definition of $\psi$ and the fact that $f$ and $g$ are $\cG$-equivariant, it is straightforward to prove that these 2-cells are compatible and therefore they induce the desired isomorphic 2-cell $\mu_\psi$.

To induce $\nu_\psi$ we consider the following isomorphic 2-cells
\begin{cd}
	{\cT \x (\cP\x[\cY]\cZ)}  \arrow[rd,"{\id{\cT}\x f\st g}"]\arrow[dd,"{\pr{2}}"']\arrow[rr,"{e\x \id{\cP\x[\cY]\cZ}}"]\&[-3ex] {} \&[-3ex] {\cG \x (\cP\x[\cY]\cZ)} \arrow[d,"{\id{\cG}\x f\st g}"] \arrow[r,"{\psi}"] \& {\cP\x[\cY]\cZ} \arrow[dd,"{f\st g}"] \\
	{} \& {\cT \x \cP} \arrow[rrd,"{\pr{2}}"', bend right=10 ,""{name=A,pos=0.4}]\arrow[r,"{e\x \id{\cP}}"] \& {\cG \x \cP} \arrow[to=A,"{\nu_p}",Rightarrow ,shorten <=0.5ex, shorten >= 0.5ex] \arrow[rd,"{p}"] \& {} \\
	{\cP\x[\cY]\cZ} \arrow[rrr,"{f\st g}"'] \& {} \& {}  \& {\cP} 
\end{cd}
and 
\begin{cd}
	{\cT \x (\cP\x[\cY]\cZ)} \arrow[rd,"{\id{\cT}\x g\st f}"] \arrow[dd,"{\pr{2}}"']\arrow[rr,"{e\x \id{\cP\x[\cY]\cZ}}"]\&[-3ex] {} \&[-3ex] {\cG \x (\cP\x[\cY]\cZ)} \arrow[d,"{\id{\cG}\x g\st f}"] \arrow[r,"{\psi}"] \& {\cP\x[\cY]\cZ} \arrow[dd,"{g\st f}"]\\
	{} \& {\cT \x \cZ}  \arrow[rrd,"{\pr{2}}"', bend right=10 ,""{name=A,pos=0.4}] \arrow[r,"{e\x \id{\cZ}}"] \& {\cG \x \cZ} \arrow[to=A,"{\nu_z}",Rightarrow ,shorten <=0.5ex, shorten >= 0.5ex] \arrow[rd,"{z}"] \& {} \\
	{\cP\x[\cY]\cZ} \arrow[rrr,"{g\st f}"'] \& {} \& {}  \& {\cZ} 
\end{cd}
Using the definition of $\psi$ and the fact that $f$ and $g$ are $\cG$-equivariant, it is straightforward to prove that these 2-cells are compatible and therefore they induce the desired isomorphic 2-cell $\nu_\psi$.

It remains to prove that $\mu_{\psi}$ and $\nu_{\psi}$ satisfy the coherence conditions (\ref{action1}) and (\ref{action2}) of Definition \ref{action2group}. This is straightforward using the fact that the corresponding coherences hold for the actions $p$ and $z$.
\end{proof}

To define \predfn{principal $\cG$-2-bundles}, we will need to use the action induced by the internal multiplication of $\cG$ on a product of the form $\cG \x \cX$.

\begin{prop}
	The morphism 
	$$m\x \id{\cX}\: \cG \x \cG \x \cX \to \cG \x \cX$$
	is an action of $\cG$ on $\cG \x \cX$.
\end{prop}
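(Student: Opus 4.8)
The plan is to deduce the statement from Proposition~\ref{pseudomonad}, which exhibits $\cG\x-\:\K\to\K$ as a pseudomonad whose multiplication is $m\x\id{-}$ and whose unit is $e\x\id{-}$, and whose pseudoalgebras are precisely the objects of $\K$ equipped with a $\cG$-action. For any pseudomonad $T$ on a $2$-category, every object of the form $TA$ carries a canonical \emph{free} pseudoalgebra structure: $TA$ becomes a pseudoalgebra with structure morphism the component $\mu_A\:T(TA)\to TA$ of the pseudomonad multiplication, and the required associativity and unitality invertible $2$-cells are supplied by the coherence data of the pseudomonad. Specialising $T=\cG\x-$ and $A=\cX$, the object $\cG\x\cX$ thus acquires a free pseudoalgebra structure whose structure morphism is exactly the component of $m\x\id{-}$ at $\cX$, namely $m\x\id{\cX}\:\cG\x\cG\x\cX\to\cG\x\cX$. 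Transporting this structure across the identification of pseudoalgebras with $\cG$-actions given by Proposition~\ref{pseudomonad}, we conclude that $m\x\id{\cX}$ is an action of $\cG$ on $\cG\x\cX$.

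Concretely, unwinding this correspondence identifies the data of the action: the invertible $2$-cell $\mu_{m\x\id{\cX}}$ is the whiskering by $\id{\cX}$ of the associativity isomorphism of the internal multiplication of $\cG$, and $\nu_{m\x\id{\cX}}$ is the whiskering by $\id{\cX}$ of the left unit isomorphism of $\cG$. This is forced by the equalities $(m\x\id{\cX})\c(\id{\cG}\x m\x\id{\cX})=\bigl(m\c(\id{\cG}\x m)\bigr)\x\id{\cX}$ and $(m\x\id{\cX})\c(m\x\id{\cG}\x\id{\cX})=\bigl(m\c(m\x\id{\cG})\bigr)\x\id{\cX}$, together with the analogous identity for the unit, so that the source and target of the required $2$-cells $\mu_{m\x\id{\cX}}$ and $\nu_{m\x\id{\cX}}$ are literally those of the coherence isomorphisms of $\cG$ whiskered by $\id{\cX}$. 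The coherence equalities \refd{action1} and \refd{action2} then reduce, after whiskering by $\id{\cX}$, to the pentagon and triangle coherence axioms of the internal $2$-group $\cG$, which hold by definition (see \cite{BaezLauda}).

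The argument is essentially formal, so I do not expect a genuine obstacle. The only point requiring care is bookkeeping of variance conventions: one must match the direction of the associator and unit isomorphisms in the Baez--Lauda definition of internal $2$-group with the direction of $\mu_x$ and $\nu_x$ in Definition~\ref{action2group} (possibly inserting inverses), and then check that, once whiskered by $\id{\cX}$, the $2$-group coherence axioms coincide on the nose with \refd{action1} and \refd{action2}. Alternatively one can bypass this entirely by citing Proposition~\ref{pseudomonad} together with the general fact about free pseudoalgebras, as in the first paragraph.
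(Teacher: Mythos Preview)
Your proof is correct and takes a genuinely different, more conceptual route than the paper. The paper argues directly: it sets $\rho:=m\x\id{\cX}$ and induces the structure $2$-cells $\mu_\rho$ and $\nu_\rho$ componentwise via the two-dimensional universal property of the product $\cG\x\cX$, pairing $\alpha_m\x\id{}$ on the first projection with a product-associativity isomorphism on the second (and similarly $\lambda_m\x\id{\cX}$ for the unit), then remarks that the coherences \refd{action1} and \refd{action2} follow from the coherence axioms for $m$ and $e$. You instead invoke Proposition~\ref{pseudomonad} and the standard fact that free $T$-pseudoalgebras $TA$ exist for any pseudomonad $T$, with structure map the multiplication component $\mu_A$; specialising to $T=\cG\x-$ yields the claim in one line.

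Your approach is cleaner and reuses work already done in the paper, at the cost of importing the free-pseudoalgebra fact (which is folklore but not proved here). The paper's approach is more explicit and self-contained, and in particular makes visible the product-associativity component that your phrase ``whiskering by $\id{\cX}$'' slightly elides: strictly speaking $\mu_\rho$ is built from $\alpha_m\x\id{}$ together with a bracket-shuffling isomorphism on the $\cX$-factor, not purely $\alpha_m\x\id{\cX}$. This is harmless but worth saying precisely if you keep the explicit unwinding in your second paragraph; alternatively, as you note, you can simply cite Proposition~\ref{pseudomonad} and the free-pseudoalgebra construction and stop there.
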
 
\begin{proof}
	To simplify the notation we denote by $\rho$ the morphism $m\x \id{\cX}$. We need to prove that there exist coherent isomorphic 2-cells
	\begin{eqD*}
		\csq*[l][7][7][\mu_\rho]{\cG \x \cG \x \cG \x \cU}{\cG \x \cG \x \cU} {\cG \x \cG \x \cU} {\cG \x \cU} {\id{\cG}\x \rho}{m\x \id{\cG \x \cU}}{\rho}{\rho}
		\qquad \text{and} \qquad
		\begin{cdN}
			{\cG \x \cU} \arrow[r,"{e\x \id{\cG \x \cU}}"] \arrow[rd,"{}",equal,""{name=A}]\& {\cG \x \cG \x \cU} \arrow[to=A,"{\nu_\rho}",Rightarrow ,shorten <=0.5ex, shorten >= 0.5ex] \arrow[d,"{\rho}"] \\
			{} \& {\cG \x \cU}
		\end{cdN}
	\end{eqD*}
	We induce $\mu_\rho$ and $\nu_\rho$ using the two-dimensional universal property of the product $\cG\x\cX$. To induce $\mu_\rho$ we use the following isomorphic compatible 2-cells
	\begin{cd}
		{\cG \x \cG \x \cG \x \cX}  \arrow[dd,"{m\x \id{\cX}}"'] \arrow[r,"{\id{\cG}\x \rho}"] \& {\cG \x \cG \x \cX} \arrow[ldd,"{\alpha_m \x \id{}}",Rightarrow ,shorten <=8.5ex, shorten >= 8.5ex,shift left=-3ex] \arrow[d,"{m\x \id{\cX}}"] \arrow[r,"{\rho}"] \& { \cG \x \cX} \arrow[dd,"{\pr{1}}"]\\
		{} \&  {\cG \c \cX} \arrow[rd,"{\pr{1}}"] \& {}\\
		{\cG \x \cG \x \cX} \arrow[ru,"{m\x \id{\cX}}"]  \arrow[r,"{\rho}"']  \& {\cG \x \cX} \arrow[r,"{\pr{1}}"'] \& {\cG}
	\end{cd}
	and 
	\begin{cd}
		{\cG \x \cG \x \cG \x \cX} \arrow[dd,"{m\x \id{\cX}}"'] \arrow[r,"{\id{\cG}\x \rho}"]  \& {\cG \x \cG \x \cX} \arrow[d,"{}",aiso] \arrow[r,"{\rho}"] \& { \cG \x \cX} \arrow[dd,"{\pr{2}}"]\\
		{} \& {(\cG \x \cG) \x \cX} \arrow[rd,"{\pr{2}}"] \& {}\\
		{\cG \x \cG \x \cX} \arrow[ru,"{}",aiso] \arrow[r,"{\rho}"'] \& {\cG \x \cX} \arrow[r,"{\pr{2}}"'] \& {\cG}
	\end{cd}
To induce $\nu_\rho$ we use the following isomorphic compatible 2-cells
\begin{cd}
	{\cG \x \cX} \arrow[rd,"{}",equal,bend right=30,""{name=A}]\arrow[r,"{e \x \id{\cG}\x \id{\cX}}"',bend right=20]  \arrow[dd,equal] \arrow[r,"{e \x \id{\cG\x \cX}}"] \&[2ex] {\cG \x \cG \x \cX} \arrow[to=A,"{\lambda_m\x \id{\cX}}"{inner sep=0.01ex},Rightarrow ,shorten <=2.5ex, shorten >= 2.5ex,shift left=1.6ex]\arrow[d,"{m\x \id{\cX}}"] \arrow[r,"{\rho}"] \& { \cG \x \cX}  \arrow[dd,"{\pr{1}}"]\\
	{} \&  {\cG \x \cX} \arrow[rd,"{\pr{1}}"]  \& {}\\
	{\cG \x \cX}   \arrow[rr,"{\pr{1}}"'] \&{} \& {\cG}
\end{cd}
and 
\begin{cd}
	{\cG \x \cX}  \arrow[dd,equal] \arrow[r,"{e \x \id{\cG\x \cX}}"] \&[2ex] {\cG \x \cG \x \cX} \arrow[d,"{}",aiso]  \arrow[r,"{\rho}"] \& { \cG \x \cX} \arrow[dd,"{\pr{2}}"]\\
	{} \&  {(\cG \x \cG) \x \cX} \arrow[rd,"{\pr{2}}"] \& {}\\
	{\cG \x \cX}   \arrow[rr,"{\pr{2}}"'] \&{} \& {\cX}
\end{cd}
It remains to prove that $\mu_{\rho}$ and $\nu_{\rho}$ satisfy the coherence conditions (\ref{action1}) and (\ref{action2}) of Definition \ref{action2group}. This can be easily done using the coherence diagrams satisfied by $m$ and $e$ respectively.
\end{proof}

In order to introduce \predfn{principal 2-bundles}, we need to define local triviality of a morphism with respect to the bitopology on $\K$. The idea is to use iso-comma objects in place of the pullbacks used in the one-dimensional context.
\begin{defne}\label{2-loctriv}
	Let $\cX$ and $\cY$ be objects of $\K$ with fixed actions of $\cG$ on them. We say that the morphism $g\: \cY\to \cX$ is \dfn{2-locally trivial} if there exists a bisieve $S\: R\Rightarrow \K(-, \cX)$ in $\tau(\cX)$ such that, for every $f\: \cU \to \cX$ in the bisieve $S$, the iso-comma object 
	\begin{eqD*}
		{\biisocommaN{\cY\x[\cX] \cU}{\cY}{{\cU}}{\cX}{g\st f}{f\st g}{g}{f}{\lambda^{f,g}}{3}{3}}
	\end{eqD*}
	is equivalent to the product $\cG\x \cU$ over $\cU$ via a $\cG$-equivariant equivalence.
\end{defne}

\begin{oss}[iso-comma objects vs bi-iso-comma objects]
	The previous definition could equivalently be given using bi-iso-comma objects instead of iso-comma objects. But since $\K$ has all iso-comma objects and they are representatives of the corresponding bi-iso-comma objects, we can more conveniently ask the required property for iso-comma objects. Notice that we only ask equivalences between the iso-comma objects and the corresponding products. This shows the bicategorical nature of the previous definition.
\end{oss}

\begin{oss}[reformulation of Definition \ref{2-loctriv}]\label{reform}
	We can equivalently reformulate Definition \ref{2-loctriv} by asking the existence of a bisieve $S\: R\Rightarrow \K(-, \cX)$ in $\tau(\cX)$ such that, for every $f\: \cU \to \cX$ in the bisieve $S$, there exists a bi-iso-comma object of the form
		\begin{eqD*}
		\biisocommaN{\cG \x \cU}{\cY}{\cU}{\cX}{h}{\pr{2}}{g}{f}{\delta}{2.5}{2.5}
	\end{eqD*}
This reformulation will be useful for the proof of Proposition \ref{comma2bun}.
\end{oss}
We are now ready to introduce \predfn{principal 2-bundles} and cells between them.

\begin{defne}\label{2bundle}
	Let $\cY$ be an object of $\K$. A \dfn{principal $\cG$-2-bundle over $\cY$} is an object $\cP\in\K$ equipped with an action $p\: \cG\x \cP\to \cP$ and a $\cG$-equivariant  2-locally trivial morphism $\pi_{\cP}\: \cP\to \cY$.

	A \dfn{morphism of principal $\cG$-2-bundles over $\cY$} from $\pi_{\cP}\: \cP\to \cY$ to $\pi_{\cQ}\: \cQ\to \cY$ is a $\cG$-equivariant morphism $\phi\: \cP \to \cQ$ in $\K$ together with an isomorphic $\cG$-equivariant 2-cell  
	\begin{cd}
		{\cP} \arrow[rd,"{\pi_{\cP}}"',""{name=A}] \arrow[rr,"{\phi}"] \&[-3ex] {} \&[-3ex] {\cQ} \arrow[ld,"{\pi_{\cQ}}"]  \arrow[to=A,"{\gamma_{\phi}}",twoiso ,shorten <=2.5ex, shorten >= 2.5ex,shift left=1ex,start anchor={[xshift=-1.5ex]},end anchor={[xshift=-1.5ex]}]\\[3ex]
		{} \& {\cX} \& {}
	\end{cd}
	
	A \dfn{2-cell of principal $\cG$-2-bundles over $\cY$} from $\phi\: \cP \to \cQ$ to $\psi\: \cP \to \cQ$ is a $\cG$-equivariant 2-cell $\Gamma\: \phi \Rightarrow \psi$ such that 
	\begin{eqD*}
	\begin{cdN}
		{\cP} \arrow[rr,"{\phi}", bend left=40,""'{name=G}] \arrow[rd,"{\pi_{\cP}}"',""{name=A}] \arrow[rr,"{\psi}"',""'{name=H}] \arrow[from=G,to=H,"{\Gamma}"{pos=0.4},Rightarrow,shorten <=-0.3ex, shorten >= 0.3ex] \&[-3ex] {} \&[-3ex] {\cQ} \arrow[ld,"{\pi_{\cQ}}"]  \arrow[to=A,"{\gamma_{\psi}}",twoiso ,shorten <=2.5ex, shorten >= 2.5ex,shift left=1ex,start anchor={[xshift=-1.5ex]},end anchor={[xshift=-1.5ex]}]\\[3ex]
		{} \& {\cX} \& {}
	\end{cdN}
	\qquad= \qquad
	\begin{cdN}
		{\cP} \arrow[rd,"{\pi_{\cP}}"',""{name=A}] \arrow[rr,"{\phi}"] \&[-3ex] {} \&[-3ex] {\cQ} \arrow[ld,"{\pi_{\cQ}}"]  \arrow[to=A,"{\gamma_{\phi}}",twoiso ,shorten <=2.5ex, shorten >= 2.5ex,shift left=1ex,start anchor={[xshift=-1.5ex]},end anchor={[xshift=-1.5ex]}]\\[3ex]
		{} \& {\cX} \& {}
	\end{cdN}
\end{eqD*}
Principal $\cG$-2-bundles over $\cY$ together with morphisms and 2-cells between them form a 2-category that will be denoted $\twoBun{\cG}{\cY}$.
\end{defne}


We now aim at proving that principal $\cG$-2-bundles are closed under iso-comma objects. This property is the higher dimensional analogue of the closure of principal bundles under pullbacks and it will be very important in order to construct quotient pre-2-stacks in Section \ref{secquo2stacks}. To prove this crucial property, we will need to use the following result.

\begin{prop}\label{liftsbiisocomma}
	The forgetful functor 
	$$U\: \opn{Ps}\mbox{-}(\cG\x -)\mbox{-}\opn{Alg} \to \K$$
	from the 2-category of objects of $\K$ equipped with actions of $\cG$ (that are the pseudoalgebras of the pseudomonad $\cG\x -$ by Proposition \ref{pseudomonad}) to $\K$ lifts bi-iso-comma objects.
\end{prop}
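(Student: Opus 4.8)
The plan is to reduce the statement to a general fact about pseudomonads: if $T\colon \K\to\K$ is a pseudomonad, then the forgetful 2-functor $U\colon \operatorname{Ps}\text{-}T\text{-}\opn{Alg}\to\K$ creates (hence in particular lifts) any class of flexible limits that $T$ preserves, provided $\K$ has them. Here $T=\cG\x-$, and by Proposition~\ref{pseudomonad} the domain of $U$ is exactly the 2-category of objects equipped with a $\cG$-action, with $\cG$-equivariant morphisms and $\cG$-equivariant 2-cells; so it suffices to check that $\cG\x-$ preserves bi-iso-comma objects and then run the standard argument.

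First I would record that $\cG\x-$ preserves bi-iso-comma objects: since $\K$ has finite flexible limits, the product with a fixed object is a flexible (indeed a product-) weight, and products commute with bi-iso-comma objects up to equivalence; concretely, $\cG\x(\cP\x[\cY]\cZ)$ satisfies the universal property of the bi-iso-comma object of $\cG\x f$ and $\cG\x g$ over $\cG\x\cY$, the comparison equivalence being built from the projections and the canonical 2-cell $\id{\cG}\x\alpha^{f,g}$. Next, given $\cG$-equivariant morphisms $f\colon\cP\to\cY$ and $g\colon\cZ\to\cY$ between objects carrying $\cG$-actions, I would form the bi-iso-comma object $\cP\x[\cY]\cZ$ in $\K$ and equip it with the induced action $\psi$ of Construction~\ref{commact} (this is the action making $f\st g$ and $g\st f$ into $\cG$-equivariant morphisms, by the Proposition following that construction). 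One then checks that the iso-comma square, viewed in $\operatorname{Ps}\text{-}(\cG\x-)\text{-}\opn{Alg}$, is again a bi-iso-comma square: the relevant universal property is tested against an arbitrary pseudoalgebra $\cW$ mapping $\cG$-equivariantly and compatibly to $\cP$ and $\cZ$; such data transports along $U$ to data in $\K$, which the bi-iso-comma universal property in $\K$ turns into a morphism $\cW\to\cP\x[\cY]\cZ$ in $\K$, and the point is that this morphism is automatically $\cG$-equivariant and the induced 2-cells automatically $\cG$-equivariant. This last automaticity is exactly where preservation of bi-iso-comma objects by $\cG\x-$ is used: the equivariance structure on the induced 1-cell is itself induced by the universal property applied one level up, using that $\cG\x(\cP\x[\cY]\cZ)$ is a bi-iso-comma object, and uniqueness forces it to satisfy the pseudoalgebra-morphism coherences; likewise on 2-cells.

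I would organize the write-up as: (i) state and prove the preservation lemma for $\cG\x-$; (ii) given $f,g$ equivariant, exhibit $\psi$ on $\cP\x[\cY]\cZ$ and note $f\st g$, $g\st f$ are equivariant; (iii) verify the 1-dimensional part of the bi-iso-comma universal property in the 2-category of pseudoalgebras — existence of the induced equivariant 1-cell with its equivariant comparison 2-cell; (iv) verify the 2-dimensional part — that equivariant 2-cells into $\cP$ and $\cZ$ compatible over $\cY$ induce a unique equivariant 2-cell into $\cP\x[\cY]\cZ$; (v) conclude that $U$ lifts bi-iso-comma objects, since $U$ applied to the constructed square is (equivalent to) the given one.

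The main obstacle is step (iv) together with the coherence bookkeeping in (iii): one must show not merely that the comparison 1-cells and 2-cells \emph{exist} in $\K$ (that is immediate from the bi-iso-comma universal property there) but that they carry, \emph{canonically and coherently}, the structure of a pseudomorphism/pseudotransformation of pseudoalgebras. The clean way to avoid a direct pasting-diagram fight is to observe that $U$ is itself the forgetful 2-functor for a pseudomonad and to invoke the general principle that such $U$ creates $T$-preserved flexible limits; alternatively, present the equivariance structures as themselves induced by the universal property of the bi-iso-comma object $\cG\x(\cP\x[\cY]\cZ)$ (legitimate precisely because $\cG\x-$ preserves it), so that all required coherences reduce to the uniqueness clause of that universal property. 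I expect the rest to be routine, modulo carefully threading the coherence isomorphisms of the pseudomonad $\cG\x-$ through the diagrams.
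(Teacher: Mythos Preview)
Your direct verification in steps (ii)--(v) is essentially the paper's approach: form the bi-iso-comma object in $\K$, endow it with the action $\psi$ of Construction~\ref{commact}, and then check the one- and two-dimensional universal properties in the 2-category of pseudoalgebras by inducing the equivariance structure on the comparison 1-cell (and on induced 2-cells) via the two-dimensional universal property of the bi-iso-comma object. The paper does exactly this, constructing the 2-cell $\alpha_v$ for the induced morphism $v$ from explicit compatible 2-cells and then remarking that the coherence conditions and the 2-cell case are straightforward.

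Where you diverge is in framing the argument as an instance of a general lifting principle for pseudomonads (``$U$ creates $T$-preserved flexible limits''). Be careful here: the paper explicitly addresses this in the remark immediately following the proof, pointing out that the available result in the literature (Blackwell--Kelly--Power) only establishes lifting of PIE limits for the 2-category of \emph{strict} algebras with pseudomorphisms, not for pseudoalgebras. Since genuine $\cG$-actions are pseudoalgebras (the action axioms hold only up to $\mu_x,\nu_x$), that theorem does not apply directly, and the paper regards the explicit proof as genuinely necessary rather than a convenient unpacking. Your ``clean way to avoid a direct pasting-diagram fight'' is therefore not available off the shelf; you would either need to locate or prove a pseudoalgebra version of the lifting theorem, or commit fully to the direct argument you already outline. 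The latter is what the paper does, and your steps (ii)--(v) are an adequate plan for it. One minor difference: the paper induces $\alpha_v$ from the two-dimensional universal property of $\cP\x[\cY]\cZ$ itself, whereas you propose using that $\cG\x(\cP\x[\cY]\cZ)$ is a bi-iso-comma object; both routes work, but the paper's is slightly more direct and avoids needing the preservation lemma as a separate ingredient.
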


\begin{proof}
	Let $(\cP,p),(\cZ,z)$ and $(\cY,y)$ be in $\opn{Ps}\mbox{-}(\cG\x -)\mbox{-}\opn{Alg}$ and let $f\: \cZ \to \cY$ be $\cG$ equivariant morphisms. Consider the bi-iso-comma object 
	\begin{eqD*}
	\biisocommaN{\cP\x[\cY] \cZ}{\cP}{{\cZ}}{\cY}{g\st f}{f\st g}{g}{f}{\lambda^{f,g}}{3}{3}
	\end{eqD*}
	If $\cP\x[\cY] \cZ$ is endowed with an action of $\cG$ exactly as the one of Construction \ref{commact} (that is still an action in the case of the bi-iso-comma object), then $f\st g$ and $g\st f$ are $\cG$-equivariant by definition of the action and so is the 2-cell $\lambda^{f,g}$. 
 	Let now $(\cQ,q)$ be in $\opn{Ps}\mbox{-}(\cG\x -)\mbox{-}\opn{Alg}$ and consider the morphism $v\: \cQ \to \cP\x[\cY] \cZ$ induced by te universal property of the bi-iso-comma object $\cP\x[\cY] \cZ$ as in the following diagram
 		\begin{eqD*}
 		\bicommaunivvN{\cQ}{r}{s}{v}{\cP\x[\cY]\cZ}{\cP}{\cZ}{\cY}{g\st f}{f\st g}{g}{f}{\lambda^{f,g}}{\Omega_1}{\Omega_2}
 		\qquad = \qquad 
 		\csq*[l][7][7][\Omega]{\cQ}{\cP}{\cZ}{\cY}{r}{s}{g}{f}
 	\end{eqD*}
 If $r,s$ and $\Omega$ are $\cG$-equivariant, then so is the morphism $v$. Indeed, the invertible 2-cell 
 	\begin{eqD*}
 	\csq*[l][7][7][\alpha_v]{\cG \x \cQ}{\cQ}{\cG \x (\cP \x[\cY]\cZ) }{\cP \x[\cY]\cZ}{q}{\id{\cG}\x v}{v}{\psi}
 \end{eqD*}
can be induced using the universal property of the bi-iso-comma $\cP \x[\cY]\cZ$ starting from the following compatible 2-cells

\begin{cd}
	{\cG \x \cQ} \arrow[rd,"{\id{\cG}\x r}",""{name=B}] \arrow[r,"{q}"] \arrow[dd,"{\id{\cG}\x v}"'] \& {\cQ} \arrow[d,"{\alpha_r}",Rightarrow ,shorten <=1.5ex, shorten >= 1.5ex] \arrow[rdd,"{r}",""{name=A}] \arrow[r,"{v}"]\& {\cP \x[\cY]\cZ} \arrow[dd,"{g\st f}"] \arrow[to=A,"{{\Omega}^{-1}_1}",Rightarrow ,shorten <=3.5ex, shorten >= 3.5ex]\\
	{} \& {\cG \x \cP} \arrow[d,"{}",twoiso,shorten <=1.5ex, shorten >= 1.5ex] \arrow[rd,"{p}"] \& {} \\
	{\cG \x (\cP \x[\cY]\cZ)} \arrow[from=B,"{\id{\cG}\x \Omega_1}",Rightarrow ,shorten <=6.5ex, shorten >= 6.5ex] \arrow[ru,"{\id{\cG}\x g\st f}"']\arrow[r,"{\psi}"']\& {\cP \x[\cY]\cZ} \arrow[r,"{g\st f}"'] \& {\cP} 
\end{cd}
and 
\begin{cd}
	{\cG \x \cQ} \arrow[rd,"{\id{\cG}\x s}",""{name=B}] \arrow[r,"{q}"] \arrow[dd,"{\id{\cG}\x v}"'] \& {\cQ} \arrow[d,"{\alpha_s}",Rightarrow ,shorten <=1.5ex, shorten >= 1.5ex]  \arrow[rdd,"{s}",""{name=A}] \arrow[r,"{v}"]\& {\cP \x[\cY]\cZ} \arrow[dd,"{f\st g}"] \arrow[to=A,"{{\Omega}_2}",Rightarrow ,shorten <=3.5ex, shorten >= 3.5ex]\\
	{} \& {\cG \x \cZ} \arrow[d,"{}",twoiso,shorten <=1.5ex, shorten >= 1.5ex]  \arrow[rd,"{z}"] \& {} \\
	{\cG \x (\cP \x[\cY]\cZ)} \arrow[from=B,"{\id{\cG}\x \Omega^{-1}_2}",Rightarrow ,shorten <=6.5ex, shorten >= 6.5ex]\arrow[ru,"{\id{\cG}\x f\st g}"']\arrow[r,"{\psi}"']\& {\cP \x[\cY]\cZ} \arrow[r,"{f\st g}"'] \& {\cZ} 
\end{cd}
It is then easy to prove that $\alpha_v$ satisfies the required coherence conditions.
Analogously, one can easily prove that if the starting data are in $\opn{Ps}\mbox{-}(\cG\x -)\mbox{-}\opn{Alg}$, every 2-cell induced by the universal property of the bi-iso-comma object $\cP \x[\cY] \cZ$ is $\cG$-equivariant. This conclude the proof that $U$ lifts bi-iso-comma objects.
\end{proof}

\begin{oss}
	The previous result of lifting requires an explicit proof as it does not easily follows from the pseudomonadicity of $\cG \x -$ (Proposition \ref{pseudomonad}). A result of lifting of PIE limits for pseudomonads is proved in \cite{BlackwellKellyPower}, but only for the forgetful functor from 2-category of strict algebras (with pseudomorphisms between them). This result is not strong enough for us as we cannot restrict to strict algebras. In fact, the strict algebras correspond with the actions that satisfy the usual properties of action on the nose. 
\end{oss}

We will also need to use the following well-known basic property of the bi-iso-comma object, which is a higher dimensional analogue of a well-known property of pullbacks.

\begin{lemma}\label{lemmabicomma}
	Given the following diagram in $\K$ 
	\begin{cd}
		{\cT} \arrow[r,"{r}"]  \arrow[d,"{s}"']\& {\cM} \arrow[ld,"{\beta}",twoiso ,shorten <=2.5ex, shorten >= 2.5ex] \arrow[d,"{q}"]\arrow[r,"{l}"] \& {\cZ} \arrow[ld,"{\lambda}",twoiso, ,shorten <=2.5ex, shorten >= 2.5ex] \arrow[d,"{f}"]\\
		{\cX} \arrow[r,"{h}"'] \& {\cP} \arrow[r,"{g}"'] \& {\cY}
	\end{cd}
where the isomorphic 2-cell $\lambda$ exhibits a bi-iso-comma object, the isomorphic 2-cell $\beta$ exhibits a bi-iso comma object if and only if the pasting diagram exhibits a bi-iso-comma object.
\end{lemma}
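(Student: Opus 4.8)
The plan is to deduce the statement from the two-dimensional universal properties of the bi-iso-comma objects involved, by comparing cone categories at an arbitrary test object; this is the one-dimension-up version of the usual pasting lemma for pullbacks. Fix an object $\cW$ of $\K$. For a cospan $u\colon A\to C$, $v\colon B\to C$ in $\K$, let $\mathbf{Cone}_{u,v}(\cW)$ be the category whose objects are triples $(a\colon\cW\to A,\, b\colon\cW\to B,\, \theta\colon u\circ a\Rightarrow v\circ b)$ with $\theta$ invertible, and whose morphisms are pairs of $2$-cells $a\Rightarrow a'$, $b\Rightarrow b'$ compatible with the structure $2$-cells. By definition, an invertible $2$-cell filling a square with apex $\cE$, legs $e_1,e_2$ and cospan $u,v$ exhibits $\cE$ as a bi-iso-comma object precisely when, for every $\cW$, postcomposition with that data defines an equivalence of categories $\K(\cW,\cE)\xrightarrow{\ \simeq\ }\mathbf{Cone}_{u,v}(\cW)$.

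In these terms, the hypothesis on $\lambda$ says that $\Lambda_\cW\colon\K(\cW,\cM)\to\mathbf{Cone}_{g,f}(\cW)$, $a\mapsto(q\circ a,\, l\circ a,\, \lambda\cdot a)$, is an equivalence for every $\cW$; what has to be proved is that $\Phi_\cW\colon\K(\cW,\cT)\to\mathbf{Cone}_{q,h}(\cW)$, $t\mapsto(r\circ t,\, s\circ t,\, \beta\cdot t)$, is an equivalence for every $\cW$ if and only if $\Phi'_\cW\colon\K(\cW,\cT)\to\mathbf{Cone}_{gh,f}(\cW)$, $t\mapsto(s\circ t,\, l\circ r\circ t,\, \Theta\cdot t)$, is an equivalence for every $\cW$, where $\Theta$ denotes the pasted $2$-cell. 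The key step is to construct, for each $\cW$, an equivalence $\Psi_\cW\colon\mathbf{Cone}_{q,h}(\cW)\xrightarrow{\ \simeq\ }\mathbf{Cone}_{gh,f}(\cW)$ fitting into a commutative triangle $\Psi_\cW\circ\Phi_\cW=\Phi'_\cW$; granting this, the $2$-out-of-$3$ property of equivalences of categories yields that $\Phi_\cW$ is an equivalence exactly when $\Phi'_\cW$ is, and letting $\cW$ range over $\K$ gives the lemma.

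To construct $\Psi_\cW$ I would exhibit it as a composite of two equivalences. First, feeding the $\cM$-leg through the equivalence $\Lambda_\cW$ identifies $\mathbf{Cone}_{q,h}(\cW)$ with the category of quintuples $(c\colon\cW\to\cP,\, d\colon\cW\to\cZ,\, b\colon\cW\to\cX,\, \epsilon\colon g\circ c\Rightarrow f\circ d,\, \theta\colon c\Rightarrow h\circ b)$ — all $2$-cells invertible — via the functor sending a cone $(a,b,\theta)$ over $(q,h)$ to $(q\circ a,\, l\circ a,\, b,\, \lambda\cdot a,\, \theta)$; this is an equivalence because $\Lambda_\cW$ is fully faithful and essentially surjective. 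Second, the functor that forgets $c$ and $\theta$, sending such a quintuple to $(b,\, d,\, \epsilon\circ(g\cdot\theta)^{-1})\in\mathbf{Cone}_{gh,f}(\cW)$, is an equivalence, since the invertibility of $\theta$ makes the pair $(c,\theta)$ — and the corresponding component of every morphism of quintuples — uniquely determined up to canonical isomorphism by $b$ (take $c=h\circ b$, $\theta=\id{h\circ b}$). The composite $\Psi_\cW$ sends $(a\colon\cW\to\cM,\, b,\, \theta\colon q\circ a\Rightarrow h\circ b)$ to $(b,\, l\circ a,\, (\lambda\cdot a)\circ(g\cdot\theta)^{-1})$, and since the pasted cell $\Theta$ is obtained precisely by whiskering $\beta$ by $g$ and pasting with $\lambda$ whiskered by $r$, unwinding the definitions gives $\Psi_\cW\circ\Phi_\cW=\Phi'_\cW$, using only the strict associativity of whiskering in the $2$-category $\K$.

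I expect the only genuine work to be bookkeeping: verifying that the two functors composing $\Psi_\cW$ are well defined and fully faithful on morphisms of cones, and that the relevant whiskering and interchange identities in $\K$ make the triangle commute with the structure $2$-cells arising from $\lambda$ and $\beta$. None of this is conceptually hard — all the content is packaged into the two equivalences from which $\Psi_\cW$ is built, exactly as in the one-dimensional pullback pasting lemma — and, as there, no assumption on $\K$ is needed beyond the existence of the bi-iso-comma objects that appear in the statement.
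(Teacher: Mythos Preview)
Your argument is correct and is exactly the standard proof of the pasting lemma for bi-iso-comma objects via the two-out-of-three property for equivalences of cone categories. The paper's own proof consists of the single sentence ``Straightforward using the definition of bi-iso-comma object'', so you have in fact supplied the details the paper omits; the approach is the same, only spelled out.
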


\begin{proof}
	Straightforward using the definition of bi-iso-comma object.
\end{proof}

We are  now ready to prove that principal $\cG$-2-bundles are closed under iso-comma-objects.

\begin{prop} \label{comma2bun}	Let $\pi_\cP\: \cP\to \cY$ be a principal $\cG$-2-bundle  over $\cY$. Then the iso-comma object $\cP\x[\cY]\cZ$ given by 
		\begin{eqD*}
		{\biisocommaN{\cP\x[\cY]\cZ}{\cP}{\cZ}{\cY}{\pi_{\cP}\st f}{f\st \pi_{\cP}}{\pi_{\cP}}{f}{\lambda_{\cP}^{f}}{3}{3}}
	\end{eqD*}
	is a principal $\cG$-2-bundle over $\cZ$ with morphism $\pi_{\cP\x[\cY]\cZ}:=f\st \pi_{\cP}$.
\end{prop}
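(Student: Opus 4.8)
The plan is to endow $\cP\x[\cY]\cZ$ with the $\cG$-action constructed in Construction~\ref{commact}, and then to deduce the 2-local triviality of the projection $f\st\pi_\cP$ from that of $\pi_\cP$ by pulling the trivializing bisieve back along $f\:\cZ\to\cY$ and using the pasting lemma for iso-comma objects. First I would equip $\cY$ and $\cZ$ with the trivial $\cG$-actions, following the standing convention; then $f$ is $\cG$-equivariant, while $\pi_\cP$ is $\cG$-equivariant by hypothesis. Construction~\ref{commact}, with $\pi_\cP$ in the role of $f$ and $f$ in the role of $g$, equips $\cP\x[\cY]\cZ$ with an action $\psi$ of $\cG$ --- indeed an action, by the Proposition following that construction --- and, by the very definition of $\psi$ (compare the proof of Proposition~\ref{liftsbiisocomma}), the morphism $\pi_{\cP\x[\cY]\cZ}=f\st\pi_\cP$ is $\cG$-equivariant. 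So the only thing left to check is that $f\st\pi_\cP$ is 2-locally trivial.

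Choose a bisieve $T\:Q\Rightarrow\K(-,\cY)$ in $\tau(\cY)$ witnessing the 2-local triviality of $\pi_\cP$, and let $S\:R\Rightarrow\K(-,\cZ)$ be its pullback along $f$; by the stability axiom of a Grothendieck bitopology, $S\in\tau(\cZ)$, and $S$ will be the witnessing bisieve for $f\st\pi_\cP$. Fix $h\:\cU\to\cZ$ in $S$, so that $f\c h$ lies in $T$ (up to a canonical 2-isomorphism, immaterial for iso-comma objects). Forming the iso-comma object $(\cP\x[\cY]\cZ)\x[\cZ]\cU$ of $f\st\pi_\cP$ and $h$ and stacking its defining square on top of the one defining $\cP\x[\cY]\cZ$ yields
\begin{cd}
	{(\cP\x[\cY]\cZ)\x[\cZ]\cU} \arrow[r] \arrow[d] \& {\cP\x[\cY]\cZ} \arrow[r,"{\pi_\cP\st f}"] \arrow[d,"{f\st\pi_\cP}"'] \& {\cP} \arrow[d,"{\pi_\cP}"]\\
	{\cU} \arrow[r,"{h}"'] \& {\cZ} \arrow[r,"{f}"'] \& {\cY}
\end{cd}
where both inner squares carry their iso-comma 2-cells. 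By Lemma~\ref{lemmabicomma} the outer pasting exhibits $(\cP\x[\cY]\cZ)\x[\cZ]\cU$ as a bi-iso-comma object of $\pi_\cP$ and $f\c h$, with the projection to $\cU$ as one of its legs. Since $f\c h\in T$, the reformulation of 2-local triviality in Remark~\ref{reform} presents $\cG\x\cU$ --- with the action $m\x\id{\cU}$ and leg $\pr{2}$ --- as a second bi-iso-comma object of $\pi_\cP$ and $f\c h$; by uniqueness of bi-iso-comma objects up to equivalence, there is an equivalence $(\cP\x[\cY]\cZ)\x[\cZ]\cU\simeq\cG\x\cU$ over $\cU$.

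It remains to promote this to a $\cG$-equivariant equivalence, and here Proposition~\ref{liftsbiisocomma} is essential. Every 1-cell appearing in the pasting above --- the two pairs of iso-comma projections, together with the composites along the top and the bottom --- is $\cG$-equivariant, each being a projection out of an iso-comma of $\cG$-equivariant morphisms (with $\cU$ given the trivial action) or a composite of such, and the iso-comma 2-cells are $\cG$-equivariant as well. Since the forgetful functor $U\:\opn{Ps}\mbox{-}(\cG\x -)\mbox{-}\opn{Alg}\to\K$ lifts bi-iso-comma objects, the pasting already exhibits $(\cP\x[\cY]\cZ)\x[\cZ]\cU$ as a bi-iso-comma object of $\pi_\cP$ and $f\c h$ inside $\opn{Ps}\mbox{-}(\cG\x -)\mbox{-}\opn{Alg}$; and $\cG\x\cU$ with the action $m\x\id{\cU}$ is such a bi-iso-comma object there too, which is precisely what the 2-local triviality of $\pi_\cP$ delivers, once one notes --- again using Proposition~\ref{liftsbiisocomma} --- that the corresponding iso-comma object in $\K$ lifts to $\opn{Ps}\mbox{-}(\cG\x -)\mbox{-}\opn{Alg}$. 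Uniqueness of bi-iso-comma objects in $\opn{Ps}\mbox{-}(\cG\x -)\mbox{-}\opn{Alg}$ then upgrades the equivalence to a $\cG$-equivariant equivalence over $\cU$, which is exactly the condition for $f\st\pi_\cP$ to be 2-locally trivial. Therefore $\cP\x[\cY]\cZ$, with the action $\psi$ and the morphism $f\st\pi_\cP$, is a principal $\cG$-2-bundle over $\cZ$.

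The step I expect to be the main obstacle is this last one: carrying the $\cG$-equivariance through the iso-comma pasting carefully enough that Proposition~\ref{liftsbiisocomma} genuinely applies, so that the equivalence $(\cP\x[\cY]\cZ)\x[\cZ]\cU\simeq\cG\x\cU$ is obtained inside $\opn{Ps}\mbox{-}(\cG\x -)\mbox{-}\opn{Alg}$ and not merely in $\K$. A secondary technical point is to confirm that membership of $h$ in the pullback bisieve $S$ really does force $f\c h$ into $T$ in the precise form needed to invoke Remark~\ref{reform}.
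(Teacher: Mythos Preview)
Your argument is correct and actually cleaner than the paper's. Both proofs begin the same way---equipping $\cP\x[\cY]\cZ$ with the action of Construction~\ref{commact} and invoking Proposition~\ref{liftsbiisocomma} for the equivariance of $f\st\pi_\cP$---but diverge in how they verify 2-local triviality. You use the direct characterisation of the pullback bisieve: $h\in f\st T$ means $f\c h$ lies in $T$ (up to isomorphism), so a single application of Lemma~\ref{lemmabicomma} identifies $(\cP\x[\cY]\cZ)\x[\cZ]\cU$ with a bi-iso-comma of $\pi_\cP$ along $f\c h$, which Remark~\ref{reform} already realises as $\cG\x\cU$. The paper instead describes a generic morphism of $f\st S$ as (isomorphic to) a composite $\cW\ar{v}\cT\x[\cY]\cZ\ar{f\st g}\cZ$ with $g\in S$, first treats $f\st g$ via two further pastings (diagrams~(\ref{bicomma1})--(\ref{bicomma3})), and then handles the extra factor $v$ using that $\cG\x(-)$ takes the square over $v$ to a bi-iso-comma. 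Your route avoids this intermediate iso-comma $\cT\x[\cY]\cZ$ entirely.

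You are also more explicit than the paper about the $\cG$-equivariance of the trivialising equivalence: the paper works throughout with the reformulation of Remark~\ref{reform} and leaves the equivariance implicit, whereas you spell out that Proposition~\ref{liftsbiisocomma} lifts the relevant bi-iso-comma comparisons to $\opn{Ps}\mbox{-}(\cG\x -)\mbox{-}\opn{Alg}$. That is the honest place where the work lies, and your identification of it as the main obstacle is accurate. Your secondary worry---that $h\in f\st T$ forces $f\c h\in T$ in the form needed---is exactly the content of the pullback bisieve construction (the iso-comma of $T$ against $f_\ast$ in presheaves), and holds up to the canonical 2-isomorphism you already noted; since bi-iso-comma objects are insensitive to replacing a leg by an isomorphic one, this causes no trouble.
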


\begin{proof}
	By Proposition \ref{liftsbiisocomma}, the morphism $f\st \pi_{\cP}$ is $\cG$-equivariant. So it remains to prove that it is 2-locally trivial.
	Since $\pi_{\cP}$ is 2-locally trivial, by Remark \ref{reform}, there exists a covering bisieve $S\: R \Rightarrow \K(-,\cY)$ over $\cY$ such that, for every $g\: \cT \to \cY$ in the bisieve $S$, there exists a bi-iso-comma object of the form 
		\begin{eqD*}
		\biisocommaN{\cG \x \cT}{\cP}{\cT}{\cX}{h}{\pr{2}}{\pi_{\cP}}{g}{\delta}{2.5}{2.5}
	\end{eqD*}
We now consider the  covering bisieve $f\st S$.  It is easy to see that every morphism in $f\st S$ is isomorphic a composite 
$$\cW \ar{v} \cT\x[\cY]\cZ \ar{f\st g} \cZ$$
with $g\: \cT \to \cY$ in $S$. And so it suffices to prove that there exists a bi-iso-comma object of the form 
\begin{cd}
	{\cG \x \cW}  \arrow[rr,"{k}"] \arrow[d,"{\pr{2}}"']\& {} \& {\cP \x[\cY] \cZ} \arrow[lld,"{}", twoiso ,shorten <=9.5ex, shorten >= 9.5ex] \arrow[d,"{f\st \pi_{\cP}}"]\\
	{\cW} \arrow[r,"{v}"'] \& {\cT \x[\cY] \cZ} \arrow[r,"{f\st g}"']\& {\cZ}
\end{cd}
We start observing that the identity 2-cell inside a commutative square of the form
\csq{\cG \x \cU}{\cG \x \cT}{\cU}{\cT}{\id{\cG}\x u}{\pr{2}} {\pr{2}} {u}
exhibits a bi-iso-comma object. Indeed, it is straightforward to prove that $\cG \x \cU$ satisfies the universal properties of the bi-iso-comma object inducing the required morphisms and 2-cells via the universal properties of the product.
Then, thanks to Lemma \ref{lemmabicomma}, it is enough to prove that there exists a bi-iso-comma object of the form 
\begin{cd}
	{\cG \x (\cT \x[\cY] \cZ)} \arrow[r,"{l}"] \arrow[d,"{\pr{2}}"']\& {\cP\x[\cY] \cZ} \arrow[ld,"{}",twoiso ,shorten <=3.5ex, shorten >= 3.5ex] \arrow[d,"{f\st \pi_{\cP}}"] \\
	{\cT \x[\cY] \cZ} \arrow[r,"{f\st g}"']\& {\cZ}
\end{cd}
To prove this, we first look at the following pasting of two bi-iso-comma objects
\begin{cd}
	{\cG  \x (\cT \x[\cY] \cZ)} \arrow[r,"{\id{\cG}\x g\st f}"] \arrow[d,"{\pr{2}}"']\&[3ex] {\cG \x \cT}  \arrow[r,"{h}"]\arrow[d,"{\pr{2}}"']\& {\cP} \arrow[ld,"{\delta}",twoiso,Rightarrow ,shorten <=3.5ex, shorten >= 3.5ex] \arrow[d,"{\pi_{\cP}}"] \\
	{\cT \x[\cY] \cZ} \arrow[r,"{g\st f}"'] \& {\cT} \arrow[r,"{g}"'] \& {\cY}
\end{cd}
By Lemma \ref{lemmabicomma}, it
exhibits a bi-iso comma object. Since the following 2-cell is invertible
	\begin{eqD*}
	{\biisocommaN{\cT\x[\cY]\cZ}{\cT}{\cZ}{\cY}{g\st f}{f\st g}{g}{f}{\lambda^{f,g}}{3}{3}}
\end{eqD*}
the following pasting exhibits a bi-iso-comma object as well
\begin{eqD}{bicomma1}
\begin{cdN}
	{\cG  \x (\cT \x[\cY] \cZ)} \arrow[rr,"{l}"] \arrow[d,"{\pr{2}}"']\&[3ex] {} \& {\cP} \arrow[lld,"{\beta}",twoiso,Rightarrow ,shorten <=9.5ex, shorten >= 9.5ex] \arrow[d,"{\pi_{\cP}}"] \\
	{\cT \x[\cY] \cZ} \arrow[r,"{f\st g}"'] \& {\cZ} \arrow[r,"{f}"'] \& {\cY}
\end{cdN}
\end{eqD}
where $\beta$ is given by the pasting of $\hat{\delta}$ and $\lambda^{f,g}$. 

Moreover, by Lemma \ref{lemmabicomma}, the following pasting of bi-iso-comma object squares
\begin{eqD}{bicomma2}
\begin{cdN}
	{\cM} \arrow[r,"{q_1}"]  \arrow[d,"{q_2}"']\& {\cP \x[\cY] \cZ} \arrow[ld,"{}",twoiso ,shorten <=2.5ex, shorten >= 2.5ex] \arrow[d,"{f\st \pi_{\cP}}"']\arrow[r,"{\pi_{\cP}\st f}"] \& {\cP} \arrow[ld,"{\lambda_{\cP}^f}",twoiso, ,shorten <=2.5ex, shorten >= 2.5ex] \arrow[d,"{\pi_{\cP}}"]\\
	{\cT \x[\cY]\cZ} \arrow[r,"{f\st g}"'] \& {\cZ} \arrow[r,"{f}"'] \& {\cY}
\end{cdN}
\end{eqD}
exhibits a bi-iso-comma object. Since both this bi-iso-comma object and that of diagram \ref{bicomma1} are bi-iso-comma objects of  $\pi_{\cP}$ and $f\c f\st g$, there exists an equivalence 
$$\phi\: \cG \x (\cT \x[\cY] \cZ) \aequi{} \cM$$
And it is straightforward to prove that also the following pasting diagram exhibits a bi-iso-comma object
\begin{eqD}{bicomma3}
\begin{cdN}
	{\cG \x (\cT \x[\cY] \cZ)}  \arrow[rd,"{\pr{2}}"', bend right=20 ,""{name=A}] \arrow[r,"{\phi}"',aeq] \& {\cM} \arrow[to=A,"{}",twoiso ,shorten <=2.5ex, shorten >= 2.5ex] \arrow[r,"{q_1}"]  \arrow[d,"{q_2}"']\& {\cP \x[\cY] \cZ} \arrow[ld,"{}",twoiso ,shorten <=2.5ex, shorten >= 2.5ex] \arrow[d,"{f\st \pi_{\cP}}"']\arrow[r,"{\pi_{\cP}\st f}"] \& {\cP} \arrow[ld,"{\lambda_{\cP}^f}",twoiso, ,shorten <=2.5ex, shorten >= 2.5ex] \arrow[d,"{\pi_{\cP}}"]\\
	{} \& {\cT \x[\cY]\cZ} \arrow[r,"{f\st g}"'] \& {\cZ} \arrow[r,"{f}"'] \& {\cY}
\end{cdN}
\end{eqD}
where the isomorphic two-cell inside the left triangle is the one induced by the universal property of the bi-iso-comma object $\cM$ that induces the equivalence $\phi$. 

Finally, applying again Lemma \ref{lemmabicomma} to diagram \ref{bicomma3} (considering together the two 2-cells on the left), we conclude that the following diagram exhibits a bi-iso-comma object
\begin{cd}
	{\cG \x (\cT \x[\cY] \cZ)}  \arrow[rd,"{\pr{2}}"', bend right=20 ,""{name=A}] \arrow[r,"{\phi}"',aeq] \& {\cM} \arrow[to=A,"{}",twoiso ,shorten <=2.5ex, shorten >= 2.5ex] \arrow[r,"{q_1}"]  \arrow[d,"{q_2}"']\& {\cP \x[\cY] \cZ} \arrow[ld,"{}",twoiso ,shorten <=2.5ex, shorten >= 2.5ex] \arrow[d,"{f\st \pi_{\cP}}"]\\
	{} \& {\cT \x[\cY]\cZ} \arrow[r,"{f\st g}"']  \& {\cZ} 
\end{cd}
This concludes the proof that $f\st \pi_{\cP}$ is a principal $\cG$-2-bundle.
\end{proof}

\section{Quotient 2-stacks} \label{secquo2stacks}
In this section we will explicitly construct analogues of quotient stacks one dimension higher. We will then prove that they are well-defined trihomomorphisms. We will conclude the section proving that, under mild assumptions, these objects are 2-stacks. This will be the main result of the paper.

Throughout this section $\K$ will be a $(2,1)$-category with all finite flexible limits and $\tau$ will be a bitopology on it. As we will explain in detail later on, we need to restrict ourselves to the case of a $(2,1)$ category to be able to generalize the quotient prestacks of \cite{Genprinbundquost} to one dimension higher. 

We start giving the explicit construction of \predfn{quotient pre-2-stacks}. It generalizes the definition of quotient prestacks of \cite{Genprinbundquost} to the two-dimensional context.

\begin{defne}\label{qp2}
	Let $\cX$ be an object of the $(2,1)$-category $\K$ and let $\cG$ be an internal 2-group in $\K$ that acts on it. The \dfn{quotient pre-2-stack} 
	$$\qst{\cX}{\cG}\: \K\op \to \twocat$$
	is defined as follows:
	\begin{enumerate}
		\item[$\bullet$] for every object $\cY\in \K$ we define $\qst{\cX}{\cG}(\cY)$ as the 2-category that has:
		\begin{itemize}
			\item[-]as objects the pairs $(\cP,\alpha)$ where $\pi_{\cP}\: \cP\to \cY$ is a principal $\cG$-2-bundle over $\cY$ and $\alpha\: \cP\to \cX$ is a $\cG$-equivariant morphism;
			\item[-] as morphisms from $(\cP,\alpha_{\cP})$  to $(\cQ,\alpha_{\cQ})$ the morphisms of principal $\cG$-2-bundles $\phi\: \cP \to \cQ$ such that there exists an invertible 2-cell
				\begin{cd}
				{\cP} \arrow[rd,"{\alpha_{\cP}}"',""{name=A}] \arrow[rr,"{\phi}"] \&[-3ex] {} \&[-3ex] {\cQ} \arrow[ld,"{\alpha_{\cQ}}"]  \arrow[to=A,"{\beta_{\phi}}",twoiso ,shorten <=2.5ex, shorten >= 2.5ex,shift left=1ex,start anchor={[xshift=-1.5ex]},end anchor={[xshift=-1.5ex]}]\\[3ex]
				{} \& {\cX} \& {}
			\end{cd}
		\item[-] as 2-cells from $\phi\: (\cP , \alpha_{\cP}) \to  (\cQ , \alpha_{\cQ})$ to $\psi\: (\cP , \alpha_{\cP}) \to  (\cQ , \alpha_{\cQ})$ the 2-cells of principal $\cG$-2-bundles $\Gamma\: \phi \Rightarrow \psi$ such that 
		\begin{eqD*}
			\begin{cdN}
				{\cP} \arrow[rr,"{\phi}", bend left=40,""'{name=G}] \arrow[rd,"{\alpha_{\cP}}"',""{name=A}] \arrow[rr,"{\psi}"',""'{name=H}] \arrow[from=G,to=H,"{\Gamma}"{pos=0.4},Rightarrow,shorten <=-0.3ex, shorten >= 0.3ex] \&[-3ex] {} \&[-3ex] {\cQ} \arrow[ld,"{\alpha_{\cQ}}"]  \arrow[to=A,"{\beta_{\psi}}",twoiso ,shorten <=2.5ex, shorten >= 2.5ex,shift left=1ex,start anchor={[xshift=-1.5ex]},end anchor={[xshift=-1.5ex]}]\\[3ex]
				{} \& {\cX} \& {}
			\end{cdN}
			\qquad= \qquad
			\begin{cdN}
				{\cP} \arrow[rd,"{\alpha_{\cP}}"',""{name=A}] \arrow[rr,"{\phi}"] \&[-3ex] {} \&[-3ex] {\cQ} \arrow[ld,"{\alpha_{\cQ}}"]  \arrow[to=A,"{\beta_{\phi}}",twoiso ,shorten <=2.5ex, shorten >= 2.5ex,shift left=1ex,start anchor={[xshift=-1.5ex]},end anchor={[xshift=-1.5ex]}]\\[3ex]
				{} \& {\cX} \& {}
			\end{cdN}
		\end{eqD*}
		\end{itemize}
		\item[$\bullet$] for every morphism $f\: \cZ\to \cY$ in $\K$, we define 
		$$\qst{\cX}{\cG}(f)\: \qst{\cX}{\cG}(\cY) \to \qst{\cX}{\cG}(\cZ)\vspace{-3.5mm}$$
		via iso-comma object along $f$ as the 2-functor that sends:
		\begin{itemize}
			\item[-]an object $(\cP,\alpha_{\cP})$ to the pair $(\cP \x[\cY] \cZ,\alpha_{\cP}\c \pi_{\cP}\st f)$, where $\cP \x[\cY] \cZ$ is given by the iso-comma object
				\begin{eqD*}
				{\biisocommaN{\cP\x[\cY]\cZ}{\cP}{\cZ}{\cY}{\pi_{\cP}\st f}{f\st \pi_{\cP}}{\pi_{\cP}}{f}{\lambda_{\cP}^{f}}{3}{3}}
			\end{eqD*}
		and it is equipped with the morphism $f\st \pi_{\cP}$;
		\item[-] a morphism $\phi\: (\cP , \alpha_{\cP}) \to  (\cQ , \alpha_{\cQ})$ to the morphism $\qst{\cX}{\cG}(f)(\phi)$ induced by the universal property of the iso-comma object as in the following diagram 
		\begin{eqD*}
			\commaunivvN{\cP\x[\cY]\cZ}{\phi\c \pi_{\cP}\st f}{f\st \pi_{\cP}}{\qst{\cX}{\cG}(f)(\phi)}{\cQ\x[\cY]\cZ}{\cQ}{\cZ}{\cY}{\pi_{\cQ}\st f}{f\st \pi_{\cQ}}{\pi_{\cQ}}{f}{\alpha_{\cQ}^{f}}
			\qquad= \quad
			\begin{cdN}
				{\cP\x[\cY]\cZ} \arrow[d,"{f\st \pi_{\cP}}"'] \arrow[r,"{\pi_{\cP}\st f}"] \& {\cP}\arrow[r,"{\phi}"] \arrow[rd,"{\pi_{\cP}}"',""{name=A}]  \arrow[ld,"{\alpha_{\cP}^f}",Rightarrow ,shorten <=3.5ex, shorten >= 3.5ex]\& {\cQ} \arrow[d,"{\pi_{\cQ}}"] \arrow[to=A,"{\gamma_\phi}",Rightarrow ,shorten <=0.5ex, shorten >= 0.5ex]\\
				{\cZ} \arrow[rr,"{f}"']\& {} \& {\cY}
			\end{cdN}
		\end{eqD*}
	\item[-] a 2-cell $\Gamma\: \phi \Rightarrow \psi$  from $\phi\: \cP \to \cQ$ to $\psi\: \cP \to \cQ$ to the 2-cell 
	$$\qst{\cX}{\cG}(f)(\Gamma)\: \qst{\cX}{\cG}(f)(\phi) \Rightarrow\qst{\cX}{\cG}(f)(\psi)$$
	induced by the two-dimensional universal property of the iso comma object $\cP\x[\cY]\cZ$ using the following compatible 2-cells
		\begin{eqD*}
			\hspace{-3ex}
		\begin{cdN}
			{\cP \x[\cY] \cZ} \arrow[rd,"{\pi_{\cP}\st f}"] \arrow[dd,"{\qst{\cX}{\cG}(f)(\psi)}"'] \arrow[rr,"{\qst{\cX}{\cG}(f)(\phi)}"]\&[-2ex] {}\&[-2ex]  {\cQ \x[\cY] \cZ} \arrow[dd,"{\pi_{\cQ}\st f}"]\\[-2ex]
			{} \& {\cP} \arrow[rd,"{\phi}",bend left=30,""'{name=A}] \arrow[rd,"{\psi}"',bend right=30,""{name=B}] \arrow[from=A,to=B,"{\Gamma}",Rightarrow ] \& {} \\[-2ex]
			{\cQ \x[\cY] \cZ} \arrow[rr,"{\pi_{\cQ}\st f}"']\& {} \& {\cQ}
		\end{cdN}
		\qquad
		\begin{cdN}
			{\cP \x[\cY] \cZ} \arrow[rd,"{f\st \pi_{\cP}}"]\arrow[d,"{\qst{\cX}{\cG}(f)(\psi)}"'] \arrow[r,"{\qst{\cX}{\cG}(f)(\phi)}"]\&[7ex] {\cQ \x[\cY] \cZ} \arrow[d,"{f\st \pi_{\cQ}}"] \\[6ex]
			{\cQ \x[\cY] \cZ} \arrow[r,"{f\st \pi_{\cQ}}"'] \& {\cZ}
		\end{cdN}
	\end{eqD*} 
				\end{itemize}
		\item[$\bullet$] for every 2-cell $\Lambda\: f \Rightarrow g\: \cZ \to \cY$ in $\K$, we define 
		$$\qst{\cX}{\cG}(\Lambda)\: \qst{\cX}{\cG}(f) \Rightarrow \qst{\cX}{\cG}(g) $$
		 as the 2-natural transformation that has component $\qst{\cX}{\cG}(\Lambda)_{\cP}$ relative to $(\cP, \alpha_{\cP})\in \qst{\cX}{\cG}(\cY)$ induced by the universal property of the iso-comma object $\cP \xp{\cY}{g}\cZ$ as follows
		 \begin{eqD*}
		 	\commaunivvN{\cP\xp{\cY}{f}\cZ}{\pi_{\cP}\st f}{f\st \pi_{\cP}}{\qst{\cX}{\cG}(\Lambda)_{(\cP,\alpha_{\cP})}}{\cP\xp{\cY}{g}\cZ}{\cP}{\cZ}{\cY}{\pi_{\cP}\st g}{g\st \pi_{\cP}}{\pi_{\cP}}{g}{\alpha_{\cP}^{g}}
		 	\qquad= \quad
		 	\begin{cdN}
		 		{\cP\xp{\cY}{f}\cZ} \arrow[d,"{f\st \pi_{\cP}}"'] \arrow[r,"{\pi_{\cP}\st f}"] \&[5ex] {\cP} \arrow[d,"{\pi_{\cP}}"] \arrow[ld,"{\alpha_{\cP}^{f}}"',shift left=-1.3ex,Rightarrow ,shorten <=5.5ex, shorten >= 5.5ex]\\[3ex]
		 		{\cZ} \arrow[r,"{g}"',""{name=A}] \arrow[r,"{f}", bend left=40 ,""'{name=B}]  \arrow[from=B,to=A,Rightarrow,"{\Lambda}"]\& {\cY}
		 	\end{cdN}
		 \end{eqD*}
		 \end{enumerate}
\end{defne}

\begin{oss}
	Notice that the hypothesis that all the 2-cells of $\K$ are isomorphic is needed to define both the action $\qst{\cX}{\cG}$ on morphisms and 2-cells. Indeed, in order to use the universal property of iso-comma objects to induce the desired morphisms we need to start from isomorphic 2-cells.
\end{oss}

We now aim at proving that the quotient pre-2-stacks of Definition \ref{qp2} are well-defined and that they are trihomomorphisms from $\K\op$ into $\twocat$. We will split the proof in various intermediate results.

\begin{prop}\label{twocatobj}
	Let $\cY\in \K$. Then $\qst{\cX}{\cG}(\cY)$ (defined in \ref{qp2}) is a 2-category.
\end{prop}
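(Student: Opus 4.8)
The cleanest conceptual reason is that $\qst{\cX}{\cG}(\cY)$ is a comma 2-category. By Proposition~\ref{pseudomonad}, the objects of $\K$ equipped with a $\cG$-action, the $\cG$-equivariant morphisms, and the $\cG$-equivariant 2-cells form a 2-category $\mathcal{A}\deq\opn{Ps}\mbox{-}(\cG\x -)\mbox{-}\opn{Alg}$, and by Definition~\ref{2bundle} the principal $\cG$-2-bundles over $\cY$ form a 2-category $\twoBun{\cG}{\cY}$. Forgetting $\pi_\cP$ and the structure 2-cell of a bundle morphism gives a 2-functor $P\:\twoBun{\cG}{\cY}\to\mathcal{A}$, and the pair $(\cX,x)$ is picked out by a 2-functor $\mathbf{1}\to\mathcal{A}$ from the terminal 2-category. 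Unwinding the definitions, $\qst{\cX}{\cG}(\cY)$ is precisely the iso-comma 2-category $P\downarrow(\cX,x)$: an object is a pair of an object of $\twoBun{\cG}{\cY}$ and a $\cG$-equivariant map to $\cX$, a $1$-cell carries an invertible $\cG$-equivariant $2$-cell $\beta_\phi$ filling the evident triangle, and a $2$-cell is a $2$-cell of bundles compatible with the $\beta$'s. Since comma objects of 2-functors exist in $\twocat$, this is a 2-category; and because $\K$ is a $(2,1)$-category every 2-cell is automatically invertible, so nothing is lost in the word ``isomorphic'' of Definition~\ref{qp2}.

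Should one prefer (as the surrounding exposition does) to spell the structure out by hand, the plan is as follows. I would first give identities and the composition of $1$-cells: the identity on $(\cP,\alpha_\cP)$ is $\id\cP$, viewed as a bundle morphism via $\gamma_{\id\cP}=\id{\pi_\cP}$ and equipped with $\beta_{\id\cP}=\id{\alpha_\cP}$; the composite of $\phi\:(\cP,\alpha_\cP)\to(\cQ,\alpha_\cQ)$ with $\psi\:(\cQ,\alpha_\cQ)\to(\cR,\alpha_\cR)$ is $\psi\circ\phi$ in $\K$, with $\gamma_{\psi\phi}$ the pasting of $\gamma_\phi$ onto the $\phi$-whiskering of $\gamma_\psi$, and $\beta_{\psi\phi}$ the pasting of $\beta_\phi$ onto the $\phi$-whiskering of $\beta_\psi$. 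That these are well-defined $1$-cells rests on three facts: $\psi\circ\phi$ is $\cG$-equivariant and $\gamma_{\psi\phi},\beta_{\psi\phi}$ are $\cG$-equivariant (Proposition~\ref{pseudomonad}, since pseudomorphisms compose and pseudotransformations are stable under vertical composition and whiskering); $(\psi\circ\phi,\gamma_{\psi\phi})$ is a morphism of principal $\cG$-2-bundles (this is exactly composition in $\twoBun{\cG}{\cY}$); and $\beta_{\psi\phi}$ provides the remaining datum.

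Next I would treat the $2$-cells. Vertical composition of $\Gamma\:\phi\Rightarrow\psi$ with $\Delta\:\psi\Rightarrow\chi$, and the identity $2$-cell on a $1$-cell, are taken from $\K$; one checks the outcome is still $\cG$-equivariant (Proposition~\ref{pseudomonad}) and still satisfies both compatibility equalities — the one involving the $\gamma$'s, which makes it a $2$-cell of bundles, and the one involving the $\beta$'s — by pasting the corresponding equalities of the factors. Horizontal composition and whiskering of $2$-cells are likewise inherited from $\K$, and the two compatibility equalities for a horizontal composite are deduced from those of the factors by a short pasting argument using the interchange law of $\K$. The 2-category axioms — associativity and unitality of $1$-cell composition and of vertical $2$-cell composition, the middle-four interchange law, and functoriality of whiskering — then hold because on underlying cells they are literally the axioms of $\K$, and the attached $\gamma$- and $\beta$-data agree on the two sides of each axiom for the same reason (equivalently, because $\twoBun{\cG}{\cY}$ already satisfies them, the $\beta$-bookkeeping running in parallel).

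I do not expect any genuine obstacle: the only place where one has to compute rather than quote is the stability of the $\gamma$- and $\beta$-compatibility equalities under horizontal composition of $2$-cells, and that is a routine diagram chase of the same flavour as the one underlying the fact that $\twoBun{\cG}{\cY}$ is a 2-category. The single point deserving care is simply keeping the bookkeeping of the two structure $2$-cells $\gamma_\phi$ and $\beta_\phi$ straight through all composites.
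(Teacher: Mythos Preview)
Your by-hand outline is exactly the paper's approach: the paper's proof is a single sentence saying the result is ``straightforward using the fact that $\twoBun{\cG}{\cY}$ is a 2-category and the additional data given by the morphisms into the fixed object $\cX$ behave well with respect to identity and composition,'' and your second and third paragraphs just unpack that sentence. Nothing further is needed there.

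Your comma-2-category framing is a nice conceptual shortcut, but note one small imprecision: in Definition~\ref{qp2} the structure 2-cell $\beta_\phi$ is only required to be an invertible 2-cell in $\K$, not a $\cG$-equivariant one (contrast with $\gamma_\phi$ in Definition~\ref{2bundle}, which \emph{is} required to be $\cG$-equivariant). So $\qst{\cX}{\cG}(\cY)$ is not literally the iso-comma $P\downarrow(\cX,x)$ computed in $\mathcal{A}=\opn{Ps}\mbox{-}(\cG\x -)\mbox{-}\opn{Alg}$, since that would force $\beta_\phi$ to be a 2-cell of $\mathcal{A}$. The construction is rather a mixed comma where the filling 2-cell lives in $\K$. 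This does not touch the conclusion---either version is a 2-category for the same reasons---but the identification you wrote is not exact against the paper's definition.
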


\begin{proof}
Straightforward using the fact that $\twoBun{\cG}{\cY}$ is a 2-category and the additional data given by the morphisms into the fixed object $\cX$ behave well with respect to identity and composition.
\end{proof}

\begin{prop}\label{twocatmor}
	Let $f:\cZ \to \cY$ be a morphism in $\K$. Then $\qst{\cX}{\cG}(f)$ (defined in \ref{qp2}) is a 2-functor.
\end{prop}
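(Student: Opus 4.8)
\emph{Outline.} The statement has two halves: first, that $\qst{\cX}{\cG}(f)$ sends objects, morphisms and $2$-cells of $\qst{\cX}{\cG}(\cY)$ to objects, morphisms and $2$-cells of $\qst{\cX}{\cG}(\cZ)$; and second, that it is strictly functorial, i.e.\ preserves identities, composition of morphisms, and both compositions of $2$-cells. The plan is to dispatch the first half by combining the results already proved in Section~\ref{sec2bundles} with some routine pasting, and to reduce the second half entirely to the \emph{uniqueness} assertions in the one- and two-dimensional universal properties of the iso-comma objects appearing in Definition~\ref{qp2}. These iso-comma objects exist and may be chosen once and for all, since $\K$ has all finite flexible limits, and this choice is precisely what makes $\qst{\cX}{\cG}(f)$ a \emph{strict} $2$-functor.

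\emph{Well-definedness on objects.} Given $(\cP,\alpha_{\cP})\in\qst{\cX}{\cG}(\cY)$, Proposition~\ref{comma2bun} already shows that $f\st\pi_{\cP}\:\cP\x[\cY]\cZ\to\cZ$ is a principal $\cG$-$2$-bundle over $\cZ$. Moreover $\pi_{\cP}\st f$ is $\cG$-equivariant by Proposition~\ref{liftsbiisocomma}, and $\cG$-equivariant morphisms are closed under composition (Proposition~\ref{pseudomonad}), so $\alpha_{\cP}\c(\pi_{\cP}\st f)\:\cP\x[\cY]\cZ\to\cX$ is $\cG$-equivariant. Hence $(\cP\x[\cY]\cZ,\alpha_{\cP}\c(\pi_{\cP}\st f))$ is a genuine object of $\qst{\cX}{\cG}(\cZ)$.

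\emph{Well-definedness on morphisms and $2$-cells.} Let $\phi\:(\cP,\alpha_{\cP})\to(\cQ,\alpha_{\cQ})$ be a morphism. By the one-dimensional universal property of $\cQ\x[\cY]\cZ$, the induced morphism $\qst{\cX}{\cG}(f)(\phi)$ satisfies $(f\st\pi_{\cQ})\c\qst{\cX}{\cG}(f)(\phi)=f\st\pi_{\cP}$ and $(\pi_{\cQ}\st f)\c\qst{\cX}{\cG}(f)(\phi)=\phi\c(\pi_{\cP}\st f)$ on the nose; it is $\cG$-equivariant by Proposition~\ref{liftsbiisocomma}; it is therefore a morphism of principal $\cG$-$2$-bundles over $\cZ$, commuting strictly with the bundle projections; and it becomes a morphism of $\qst{\cX}{\cG}(\cZ)$ via the invertible $2$-cell obtained by whiskering $\beta_{\phi}$ with $\pi_{\cP}\st f$, which is $\cG$-equivariant because $\beta_{\phi}$ is. For a $2$-cell $\Gamma\:\phi\Rightarrow\psi$, the assertion that $\qst{\cX}{\cG}(f)(\Gamma)$ is a $2$-cell of principal $\cG$-$2$-bundles and satisfies the required compatibility with the $\cX$-data follows from the uniqueness clause of the two-dimensional universal property of $\cP\x[\cY]\cZ$, applied to the two sides of that compatibility equation; its $\cG$-equivariance is again Proposition~\ref{liftsbiisocomma}.

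\emph{Strict functoriality, and where the work lies.} Both $\id{\cP\x[\cY]\cZ}$ and $\qst{\cX}{\cG}(f)(\id{(\cP,\alpha_{\cP})})$ induce, via the iso-comma square of $\cP\x[\cY]\cZ$, the same cone (legs $\pi_{\cP}\st f$ and $f\st\pi_{\cP}$, with the iso-comma $2$-cell), so by strict uniqueness they coincide. For composable $\phi\:(\cP,\alpha_{\cP})\to(\cQ,\alpha_{\cQ})$ and $\phi'\:(\cQ,\alpha_{\cQ})\to(\cR,\alpha_{\cR})$, one checks that $\qst{\cX}{\cG}(f)(\phi'\c\phi)$ and $\qst{\cX}{\cG}(f)(\phi')\c\qst{\cX}{\cG}(f)(\phi)$, both morphisms into $\cR\x[\cY]\cZ$, induce the same cone after composition with the iso-comma legs and pasting; hence they agree. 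Preservation of identity $2$-cells and of vertical and horizontal composition of $2$-cells is obtained the same way, now invoking uniqueness in the two-dimensional universal property. There is no conceptual difficulty: everything is forced by the universal properties. The genuine effort is the diagram-chasing needed to verify these cone identities — in particular, that the $\cX$-compatibility $2$-cell attached to a composite $\phi'\c\phi$ agrees on the nose with the pasting of those of $\phi$ and $\phi'$, and that the iso-comma $2$-cell of $\cR\x[\cY]\cZ$ whiskered by $\qst{\cX}{\cG}(f)(\phi')\c\qst{\cX}{\cG}(f)(\phi)$ reproduces the pasting of the iso-comma $2$-cell of $\cP\x[\cY]\cZ$ with $\gamma_{\phi'\c\phi}$. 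These verifications are lengthy but routine, using the interchange law and the defining diagrams of $\qst{\cX}{\cG}(f)$ on morphisms.
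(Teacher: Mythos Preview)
Your proof is correct and follows essentially the same approach as the paper's: well-definedness via Proposition~\ref{comma2bun}, and strict functoriality via the uniqueness clauses of the one- and two-dimensional universal properties of the chosen iso-comma objects. You supply more detail than the paper does---in particular, you spell out the $\cG$-equivariance of the induced morphisms and $2$-cells via Proposition~\ref{liftsbiisocomma} and the compatibility with the $\cX$-data---but the strategy is identical.
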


\begin{proof}
	The fact that $\qst{\cX}{\cG}(f)$ is well-defined follows by the closure of principal-$\cG$-2-bundles under iso-comma-objects (Proposition \ref{comma2bun}). We now prove the \linebreak[4] 2-functoriality of $\qst{\cX}{\cG}(f)$. 
	
	Firstly, we observe that, using the two-dimensional universal property of the iso-comma object, it is straightforward to prove that the assignment on 2-cells preserves identities and composition.
	
	Let now $(\cP,\alpha_{\cP})$ be an object of $\qst{\cX}{\cG}(\cY)$. Since both $\id{(\cP \x[\cY]\cZ,\alpha_{\cP} \x \pi_{\cP}\st f)}$ and \linebreak[4] $\qst{\cX}{\cG}(f)(\id{(\cP,\alpha_{\cP})})$ can be induced by the universal property of $\cP \x[\cY]\cZ$ using the same 2-cell, they are equal. So $\qst{\cX}{\cG}(f)$ preserves identities.
	
	Let now $\rho\: (\cR,\alpha_{\cR})\to (\cP,\alpha_{\cP})$ and $\phi\: (\cP,\alpha_{\cP})\to (\cQ,\alpha_{\cQ})$ be morphisms in $\qst{\cX}{\cG}(\cY)$. Since both $\qst{\cX}{\cG}(f)(\phi) \c \qst{\cX}{\cG}(f)(\rho)$ and $\qst{\cX}{\cG}(f)(\phi\c \rho)$ can be induced by the universal property of $\cQ \x[\cY]\cZ$ using the same 2-cell, they are equal. So $\qst{\cX}{\cG}(f)$ preserves composition.
	\end{proof}

\begin{prop}\label{twocat2cells}
	Let $\Lambda\: f \Rightarrow g$ be a 2-cell in $\K$ from $f\: \cZ \to \cY$ to $g\: \cZ \to \cY.$ Then $\qst{\cX}{\cG}(\Lambda)$ (defined in \ref{qp2}) is a 2-natural transformation.
\end{prop}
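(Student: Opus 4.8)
The plan is to verify, in the style of Propositions~\ref{twocatobj} and~\ref{twocatmor}, the three conditions that make $\qst{\cX}{\cG}(\Lambda)$ a strict 2-natural transformation between the 2-functors $\qst{\cX}{\cG}(f),\qst{\cX}{\cG}(g)\: \qst{\cX}{\cG}(\cY)\to\qst{\cX}{\cG}(\cZ)$. The whole argument rests on the fact that the iso-comma objects of Definition~\ref{qp2} have a \emph{strict} universal property in both dimensions: a morphism into $\cP\xp{\cY}{g}\cZ$ is uniquely determined by a pair of morphisms into $\cP$ and $\cZ$ together with an invertible 2-cell between the two composites into $\cY$, and a 2-cell between parallel morphisms into $\cP\xp{\cY}{g}\cZ$ is uniquely determined by its whiskerings with the two projections. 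First I would check that, for every object $(\cP,\alpha_\cP)$ of $\qst{\cX}{\cG}(\cY)$, the component $\qst{\cX}{\cG}(\Lambda)_{(\cP,\alpha_\cP)}\: \cP\xp{\cY}{f}\cZ\to\cP\xp{\cY}{g}\cZ$ really is a morphism of $\qst{\cX}{\cG}(\cZ)$ from $\qst{\cX}{\cG}(f)(\cP,\alpha_\cP)$ to $\qst{\cX}{\cG}(g)(\cP,\alpha_\cP)$: it is induced by the universal property of $\cP\xp{\cY}{g}\cZ$ from the morphisms $\pi_\cP\st f$ and $f\st\pi_\cP$ and from the invertible 2-cell obtained by pasting the iso-comma 2-cell $\lambda_{\cP}^{f}$ of $\cP\xp{\cY}{f}\cZ$ with the whiskering of $\Lambda$ by $f\st\pi_\cP$; since $\pi_\cP\st f$ and $f\st\pi_\cP$ are $\cG$-equivariant (by Construction~\ref{commact} and Proposition~\ref{liftsbiisocomma}), $\lambda_{\cP}^{f}$ is $\cG$-equivariant (as observed in the proof of Proposition~\ref{liftsbiisocomma}), and $\Lambda$ is automatically $\cG$-equivariant for the trivial $\cG$-actions on $\cZ$ and $\cY$, Proposition~\ref{liftsbiisocomma} gives that $\qst{\cX}{\cG}(\Lambda)_{(\cP,\alpha_\cP)}$ is $\cG$-equivariant; moreover the one-dimensional universal property forces $g\st\pi_\cP\c\qst{\cX}{\cG}(\Lambda)_{(\cP,\alpha_\cP)}=f\st\pi_\cP$ and $\pi_\cP\st g\c\qst{\cX}{\cG}(\Lambda)_{(\cP,\alpha_\cP)}=\pi_\cP\st f$ strictly, so the invertible 2-cells required to turn it into a morphism of principal $\cG$-2-bundles over $\cZ$ and into a morphism of $\qst{\cX}{\cG}(\cZ)$ may be taken to be identities.

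Next I would prove strict naturality on morphisms: given $\phi\: (\cP,\alpha_\cP)\to(\cQ,\alpha_\cQ)$ in $\qst{\cX}{\cG}(\cY)$, the aim is
\[
\qst{\cX}{\cG}(\Lambda)_{(\cQ,\alpha_\cQ)}\c\qst{\cX}{\cG}(f)(\phi)\;=\;\qst{\cX}{\cG}(g)(\phi)\c\qst{\cX}{\cG}(\Lambda)_{(\cP,\alpha_\cP)}
\]
as morphisms $\cP\xp{\cY}{f}\cZ\to\cQ\xp{\cY}{g}\cZ$. By the strict one-dimensional universal property of $\cQ\xp{\cY}{g}\cZ$ it is enough to compare the post-compositions of the two sides with $\pi_\cQ\st g$, with $g\st\pi_\cQ$, and their induced invertible 2-cells into $\cY$. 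Unwinding the defining equations of $\qst{\cX}{\cG}(f)(\phi)$, $\qst{\cX}{\cG}(g)(\phi)$, $\qst{\cX}{\cG}(\Lambda)_{(\cP,\alpha_\cP)}$ and $\qst{\cX}{\cG}(\Lambda)_{(\cQ,\alpha_\cQ)}$, the first post-composition equals $\phi\c\pi_\cP\st f$ and the second equals $f\st\pi_\cP$ on both sides, at once; for the 2-cell one applies the same defining equations together with the functoriality of whiskering, and both sides collapse to the single pasting of $\gamma_\phi$ whiskered by $\pi_\cP\st f$, of $\lambda_{\cP}^{f}$, and of $\Lambda$ whiskered by $f\st\pi_\cP$. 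This bookkeeping with the four universal properties is the step I expect to be the main obstacle, although it uses no idea beyond the 2-category axioms; the same computation also shows that the invertible 2-cell data of the two composite morphisms coincide.

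Finally I would prove strict naturality on 2-cells: given $\Gamma\: \phi\Rightarrow\psi$ in $\qst{\cX}{\cG}(\cY)$, the aim is
\[
\qst{\cX}{\cG}(g)(\Gamma)\ast\qst{\cX}{\cG}(\Lambda)_{(\cP,\alpha_\cP)}\;=\;\qst{\cX}{\cG}(\Lambda)_{(\cQ,\alpha_\cQ)}\ast\qst{\cX}{\cG}(f)(\Gamma),
\]
and by the previous step both sides are 2-cells between the same parallel pair of morphisms $\cP\xp{\cY}{f}\cZ\to\cQ\xp{\cY}{g}\cZ$. By the strict two-dimensional universal property of $\cQ\xp{\cY}{g}\cZ$ it suffices to compare the whiskerings of both sides with the projections $\pi_\cQ\st g$ and $g\st\pi_\cQ$. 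Using the compatible 2-cells that define $\qst{\cX}{\cG}(f)(\Gamma)$ and $\qst{\cX}{\cG}(g)(\Gamma)$ together with the strict identities $\pi_\cP\st g\c\qst{\cX}{\cG}(\Lambda)_{(\cP,\alpha_\cP)}=\pi_\cP\st f$ and $\pi_\cQ\st g\c\qst{\cX}{\cG}(\Lambda)_{(\cQ,\alpha_\cQ)}=\pi_\cQ\st f$ from the first step, one finds that both whiskerings with $\pi_\cQ\st g$ equal $\Gamma$ whiskered by $\pi_\cP\st f$, while both whiskerings with $g\st\pi_\cQ$ equal the identity 2-cell on $f\st\pi_\cP$. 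Hence the two 2-cells agree, which completes the verification that $\qst{\cX}{\cG}(\Lambda)$ is a 2-natural transformation.
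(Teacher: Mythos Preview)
Your proof is correct and follows the same strategy as the paper: reduce the naturality square to equalities on the two projections of the iso-comma object $\cQ\xp{\cY}{g}\cZ$ via its strict universal property. You are more thorough than the paper, which only writes out the 1-naturality check and leaves the well-definedness of the components and the 2-naturality on 2-cells implicit; your additional verifications are sound and use the same mechanism.
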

\begin{proof}
	Let $\phi\: (\cP, \alpha_{\cP}) \to (\cQ, \alpha_{\cQ})$ be a morphism in $\qst{\cX}{\cG}(\cY)$. We need to prove that the following diagram is commutative:
		\csq[][12][12]{\cP \xp{\cY}{f}\cZ}{\cP \xp{\cY}{g}\cZ}{\cQ \xp{\cY}{f}\cZ}{\cQ \xp{\cY}{g}\cZ}{\qst{\cX}{\cG}(\Lambda)_{(\cP, \alpha_{\cP})}}{\qst{\cX}{\cG}(f)(\phi)} {\qst{\cX}{\cG}(g)(\phi)}{\qst{\cX}{\cG}(\Lambda)_{(\cQ, \alpha_{\cQ})}}
Thanks to the universal property of the iso-comma object $\cQ \xp{\cY}{g}\cZ$, this is implied by the fact that the following diagrams are commutative by construction:
\begin{cd}
	{\cP \xp{\cY}{f}\cZ} \arrow[dd,"{\qst{\cX}{\cG}(f)(\phi)}"'] \arrow[rd,"{\pi_{\cP}\st f}"'] \arrow[r,"{\qst{\cX}{\cG}(\Lambda)_{(\cP, \alpha_{\cP})}}"] \&[5ex] {\cP \xp{\cY}{g}\cZ} \arrow[d,"{\pi_{\cP}\st g}"] \arrow[r,"{\qst{\cX}{\cG}(g)(\phi)}"] \&[5ex] {\cQ \xp{\cY}{g}\cZ} \arrow[dd,"{\pi_{\cQ}\st g}"] \\
	{} \& {\cP} \arrow[rd,"{\phi}"] \& {}\\
	{\cQ \xp{\cY}{f}\cZ} \arrow[rr,"{\pi_{\cQ}\st f}", bend left=20] \arrow[r,"{\qst{\cX}{\cG}(g)(\phi)}"'] \& {\cQ \xp{\cY}{g}\cZ} \arrow[r,"{\pi_{\cQ}\st g}"'] \& {\cQ}
\end{cd}
and 
\begin{cd}
	{\cP \xp{\cY}{f}\cZ}  \arrow[rrdd,"{f\st \pi_{\cP}}"]\arrow[dd,"{\qst{\cX}{\cG}(f)(\phi)}"'] \arrow[r,"{\qst{\cX}{\cG}(\Lambda)_{(\cP, \alpha_{\cP})}}"] \&[5ex] {\cP \xp{\cY}{g}\cZ} \arrow[rdd,"{g\st \pi_{\cP}}"] \arrow[r,"{\qst{\cX}{\cG}(g)(\phi)}"] \&[5ex] {\cQ \xp{\cY}{g}\cZ} \arrow[dd,"{g\st \pi_{\cQ}}"] \\
	{} \& {} \& {}\\
	{\cQ \xp{\cY}{f}\cZ} \arrow[rr,"{f\st \pi_{\cQ}}", bend left=20] \arrow[r,"{\qst{\cX}{\cG}(g)(\phi)}"'] \& {\cQ \xp{\cY}{g}\cZ} \arrow[r,"{g\st \pi_{\cQ}}"']\& {\cZ}
\end{cd}
\end{proof}

\begin{oss}
	The previous three propositions (\ref{twocatobj}, \ref{twocatmor} and \ref{twocat2cells}) show that $\qst{\cX}{\cG}$ takes values in $\twocat$.
\end{oss}

Before proving that $\qst{\cX}{\cG}$ is a trihomomorphism, we recall the definition of trihomomorphism. We take \cite{Gurskithesis} as main reference. 
\begin{defne}\label{trihomomorphism}
	Let $T$ and $T'$ be tricategories. A \dfn{trihomomorphism} $F\: T \to T'$ is given by the following data:
	
	\begin{itemize}
		\item a function $F:\opn{Ob}(T) \to \opn{Ob}(T')$;
		\item given $a,b\in T$ a pseudofunctor 
		$F_{a,b}\: T(a,b) \to T'(F(a),F(b));$
		\item given $a,b,c,d\in T$ an adjoint equivalence $\chi\: \otimes' \c (F\x F) \Rightarrow F \c \otimes $ with left adjoint 
		\begin{cd}
			{T(b,c) \x T(a,b)} \arrow[r,"{F\x F}"] \arrow[d,"{\otimes}"'] \& {T'(F(b),F(c))\x T'(F(a), F(b))}\arrow[d,"{ \otimes'}"] \arrow[ld,"{\chi}", Rightarrow, shorten <= 6ex, shorten >= 7ex ]\\
			{T(a,c)} \arrow[r,"{F}"', ]\& {T'(F(a),F(c));} 
		\end{cd}
		\item given $a\in T$ an adjoint equivalence $\iota\: I'_{F(a)} \Rightarrow F\c I_a$ with left adjoint 
		\begin{cd}
			{1} \arrow[rd,"{I'_{F(a)}}"',""{name=G}] \arrow[r,"{I_a}"]\& {T(a,a)}\arrow[to=G,"{\iota}", Leftarrow] \arrow[d,"{F}"]\\
			{} \& {T'(F(a),F(a));}
		\end{cd}
	\item given $a,b,c,d\in T$  an invertible modification 
	\begin{eqD*}
		\begin{cdsN}{3.4}{2.5}
			{} \& {T^3} \arrow[rr,"{F \x F\x F}"] \arrow[ld,"{1 \x \otimes}"'] \arrow[rd,"{ \otimes \x 1 }"]\& {} \& {T'^3} \arrow[ld,"{\chi\x 1}", Rightarrow, shorten <=1.2ex,shorten >=1.2ex ]\arrow[rd,"{1 \x \otimes'}"] \& {} \\
			{T^2} \arrow[rd,"{1 \x \otimes}"'] \& {} \& {T^2} \arrow[ll,"{ \alpha}",Rightarrow, shorten <=3.2ex,shorten >=3.2ex] \arrow[rr,"{F\x F}"'] \arrow[ld,"{ \otimes}"]\& {} \& {T'^2} \arrow[llld,"{\chi}", Rightarrow, shorten <=6.7ex,shorten >=6.7ex] \arrow[ld,"{\otimes'}"] \\
			{} \& {T} \arrow[rr,"{F}"'] \& {} \& {T'} \& {} 
		\end{cdsN}
		\aM{\omega}
		\begin{cdsN}{3.4}{2.5}
			{} \& {T^3} \arrow[rr,"{F\x F \x F}"] \arrow[ld,"{1\x \otimes}"'] \& {} \& {T'^3} \arrow[llld,"{1 \x \chi}", shorten <=6.5ex, shorten >= 6.5 ex, Rightarrow]\arrow[ld,"{1 \x \otimes'}"]\arrow[rd,"{\otimes' \x 1}"] \& {} \\
			{T^2} \arrow[rr,"{F\x F}"'] \arrow[rd,"{\otimes}"'] \& {} \& {T'^2} \arrow[ld,"{\chi}", Rightarrow, shorten <= 1ex, shorten >= 1ex] \arrow[rd,"{\otimes'}"] \& {} \& {T'^2} \arrow[ll,"{\alpha'}", Rightarrow, shorten <=2.5 ex, shorten >= 2.5 ex] \arrow[ld,"{\otimes'}"] \\
			{} \& {T} \arrow[rr,"{F}"'] \& {} \& {T';} \& {} 
		\end{cdsN}
	\end{eqD*}
\item given $a,b\in T$ invertible modifications
\begin{eqD*}
\begin{cdsN}{5.5}{5.5}
	{} \& {T'^2}  \arrow[rd,"{\otimes '}"] \arrow[rddd,"{\chi}", Rightarrow, shorten <=10ex, shorten >= 10ex]   \& {} \\
	{T'} \arrow[rd,"{\iota \x 1}", Rightarrow, shorten <=3ex, shorten >= 3ex]    \arrow[ru,"{I'\x 1}"] \& \hphantom{.} \& {T'} \\
	{} \& {T^2} \arrow[uu,"{F\x F}"]  \arrow[rd,"{\otimes}"''] \arrow[d,Rightarrow,"{l}",shorten <=1.5ex, shorten >= 1.5ex] \& {} \\
	{T} \arrow[uu,"{F}"] \arrow[rr,"{1}"']  \arrow[ru,"{1\x I}"']\& \hphantom{.} \& {T} \arrow[uu,"{F}"''] 
\end{cdsN}
\h[3]\aM{\gamma}\h[3]
\begin{cdsN}{5.5}{5.5}
	{} \& {T'^2} \arrow[rd,"{\otimes'}"] \arrow[d,"{l}", Rightarrow,shorten <=1.5ex,shorten >=1.5ex] \& {} \\
	{T'}  \arrow[rr,"{1}"', ""{name=O}]  \arrow[ru,"{I' \x 1}"] \& \hphantom{.} \& {T'} \arrow[lldd,"{}", equal, shorten <= 11.3ex, shorten >=11.3 ex]  \\
	{} \& {} \& {} \\
	{T} \arrow[uu,"{F}"] \arrow[rr,"{1}"'] \& {} \& {T} \arrow[uu,"{F}"]
\end{cdsN}
\end{eqD*}
\begin{eqD*}
	\begin{cdsN}{5.5}{5.5}
		{} \& {T^2} \arrow[rd,"{\otimes}"] \arrow[d,"{\hat{r}}", Leftarrow,shorten <=1.5ex,shorten >=1.5ex] \& {} \\
		{T}  \arrow[rr,"{1}"', ""{name=O}]  \arrow[ru,"{1\x I}"] \& \hphantom{.} \& {T} \arrow[lldd,"{}", equal, shorten <= 11.3ex, shorten >=11.3 ex]  \\
		{} \& {} \& {} \\
		{T'} \arrow[uu,"{F}", leftarrow] \arrow[rr,"{1}"'] \& {} \& {T'} \arrow[uu,"{F}", leftarrow]
	\end{cdsN}
	\h[3]\aM{\delta}\h[3]
	\begin{cdsN}{5.5}{5.5}
		{} \& {T^2}  \arrow[rd,"{\otimes}"]   \& {} \\
		{T}   \arrow[ru,"{1 \x I}"] \& \hphantom{.} \& {T} \\
		{} \& {T'^2} \arrow[ru,"{\chi}", Rightarrow, shorten <=3ex, shorten >= 3ex]  \arrow[uu,"{F\x F}", leftarrow]  \arrow[rd,"{\otimes'}"''] \arrow[d,Leftarrow,"{l}",shorten <=1.5ex, shorten >= 1.5ex] \& {} \\
		{T'} \arrow[ruuu,"{1 \x \iota}", Rightarrow, shorten <=10ex, shorten >= 10ex]   \arrow[uu,"{F}", leftarrow] \arrow[rr,"{1}"']  \arrow[ru,"{1\x I'}"']\& \hphantom{.} \& {T'} \arrow[uu,"{F}"'', leftarrow] 
	\end{cdsN}
	\end{eqD*}
	\end{itemize}
See Definition 3.3.1 of \cite{Gurskithesis} for the axioms that these data are required to satisfy.
\end{defne}

\begin{defne}
	Let $F,G\: T \to T'$ be trihomomorphisms. A \dfn{tritransformation} $\theta\: F \Rightarrow G$ is given by the following data:
	\begin{itemize}
		\item given $a\in T$ a morphism $\theta_a\: F(a) \to G(a)$ in $T'$;
		\item given $a,b\in T$ an adjoint equivalence 
		\csq[l][7][7][\theta]{T(a,b)}{T'(F(a),F(b))}{T'(G(a),G(b))}{T'(F(a),G(b));}{F}{G}{T'(1, \theta_b)}{T'(\theta_a,1)}
		\item given $a,b,c\in T$ invertible modifications 
		\begin{eqD*}
			\scalebox{0.6}[0.65]{
		\begin{cdN}
			{} \& {} \& {T(b,c)\x T(a,b)} \arrow[ddl,"{G\x G}"'] \arrow[ddll,"{\otimes}"', bend right=15] \arrow[r,"{F\x F}"] \arrow[d,"{G\x F}"]\& {T'(F(b),F(c))\x T'(F(a),F(b))} \arrow[ld,"{\theta\x 1}", Rightarrow, ,shorten <=9.5ex, shorten >= 11.5ex] \arrow[d,"{T'(1,\theta_c)\x 1}"] \\
			{} \& {} \& {T'(G(b),G(c))\x T'(F(a),F(b))} \arrow[ld,"{1\x \theta}", Rightarrow,shorten <=9.5ex, shorten >= 11ex]\arrow[r,"{T'(\theta_b)\x 1}"] \arrow[d,"{1 \x T'(1,\theta_b)}"]\& {T'(F(b),G(c))\x T'(F(a),F(b))} \arrow[ld,"{\alpha}",shorten <=10.5ex, shorten >= 10.5ex, Rightarrow] \arrow[d,"{\otimes}"] \\
			{T(a,c)} \arrow[r,"{\chi}", Leftarrow,shorten <=1.5ex, shorten >= 1.5ex] \arrow[rrd,"{G}"', bend right=15] \& {T'(G(b),G(c))\x T'(G(a),G(b))} \arrow[r,"{1\x T'(\theta_a,1)}"] \arrow[rd,"{\otimes}"] \& {T'(G(b),G(c))\x T'(F(a),G(b))} \arrow[d,"{\hat{\alpha}}", Rightarrow,shorten <=1.5ex, shorten >= 1.5ex] \arrow[r,"{\otimes}"] \& {T'(F(a), G(c))} \\
			{} \& {} \& {T'(G(a), G(c))} \arrow[ru,"{T'(\theta_a,1)}"'] \& {} 
	\end{cdN}}
		\end{eqD*}
	\begin{eqD*}
		\begin{cdN}
			{\hphantom{.}} \arrow[d,"{\Pi}", triple]\\
			{\hphantom{.}}
		\end{cdN}
	\end{eqD*}
\begin{eqD*}
	\scalebox{0.6}[0.65]{
		\begin{cdN}
			{} \& {} \& {T(b,c)\x T(a,b)}  \arrow[ddll,"{\otimes}"', bend right=15] \arrow[r,"{F\x F}"] \arrow[d,"{F\x F}"]\& {T'(F(b),F(c))\x T'(F(a),F(b))} \arrow[ldd,"{\alpha}", Rightarrow, ,shorten <=15.5ex, shorten >= 15.5ex] \arrow[d,"{T'(1,\theta_c)\x 1}"] \\
			{} \& {} \& {T'(F(b),F(c))\x T'(F(a),F(b))} \arrow[lld,"{\chi}", Rightarrow,shorten <=9.5ex, shorten >= 9.5ex]  \arrow[d,"\otimes"]\& {T'(F(b),G(c))\x T'(F(a),F(b))}  \arrow[d,"{\otimes}"] \\
			{T(a,c)} \arrow[rr,"{F}"] \arrow[rrd,"{G}"', bend right=15] \& {}   \& {T'(F(a),F(c))}  \arrow[d,"{\hat{\alpha}}", Rightarrow,shorten <=1.5ex, shorten >= 1.5ex] \arrow[r,"{T'(1,\theta_c)}"] \& {T'(F(a), G(c))} \\
			{} \& {} \& {T'(G(a), G(c))} \arrow[ru,"{T'(\theta_a,1)}"'] \& {} 
	\end{cdN}}
\end{eqD*}
			and
			
			\begin{eqD*}
				\scalebox{0.7}{
				\begin{cdN}
					{} \& {1}\arrow[ld,"{\iota}", Rightarrow ,shorten <=7.5ex, shorten >= 7.5ex] \arrow[d,"{I_{F(a)}}"] \arrow[ld,"{I_a}"',""{name=B}, bend right= 25] \arrow[rdd,"{\theta_a}",""{name=A}, bend left=35] \& {} \\
					{T(a,a)} \arrow[r,"{F}"] \arrow[d,"{G}"']\& {T'(F(a), F(a))} \arrow[ld,"{\theta}", Rightarrow ,shorten <=5.5ex, shorten >= 5.5ex]  \arrow[from=A,"{\hat{r}}", Rightarrow ,shorten <=2.5ex, shorten >= 2.5ex] \arrow[rd,"{T'(1,\theta_a)}"] \& {} \\
					{T'(G(a), G(a))} \arrow[rr,"{T'(\theta_a,1)}"'] \& {} \& {T'(F(a), G(a))} 
					\end{cdN}
				\h[-40]
				\begin{cdN}
			{\hphantom{.}}  \arrow[r,"{M}", triple]\& {\hphantom{.}}
			\end{cdN}
		\h[-40]
		\begin{cdN}
			{} \& {1} \arrow[ld,"{I_a}"',""{name=B}, bend right= 25] \arrow[rdd,"{\theta_a}",""{name=A}, bend left=35] \arrow[ddl,"{I_{G(a)}}",""{name=C},bend left=35] \& {} \\
			{T(a,a)} \arrow[d,"{G}"]\& {\hphantom{.}} \arrow[l,"{\iota}", Rightarrow ,shorten <=3.5ex, shorten >= 3.5ex]\& {} \\
			{T'(G(a), G(a))} \arrow[from=A,"{\hat{l}}", Rightarrow ,shorten <=10.5ex, shorten >= 10.5ex, shift left= 3ex] \arrow[rr,"{T'(\theta_a,1)}"']\& {} \& {T'(F(a), G(a));} 
			\end{cdN}}
			\end{eqD*}
	\end{itemize}
See Definition 3.3.6 of \cite{Gurskithesis} for the axioms that these data are required to satisfy.
\end{defne}

\begin{prop}
	The assignment $\qst{\cX}{\cG}$ of Definition \ref{qp2} is a trihomomorphism.
\end{prop}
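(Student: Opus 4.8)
The plan is to supply the coherence data left implicit by Definition~\ref{qp2} and then check the axioms of Definition 3.3.1 of \cite{Gurskithesis}. The object assignment $\cY\mapsto\qst{\cX}{\cG}(\cY)$ lands in $\twocat$ by Proposition~\ref{twocatobj}, and for fixed $\cY,\cZ$ the rule $f\mapsto\qst{\cX}{\cG}(f)$, $\Lambda\mapsto\qst{\cX}{\cG}(\Lambda)$ defines a functor $\K\op(\cY,\cZ)\to\twocat\bigl(\qst{\cX}{\cG}(\cY),\qst{\cX}{\cG}(\cZ)\bigr)$: the images are $2$-functors and $2$-natural transformations by Propositions~\ref{twocatmor} and~\ref{twocat2cells}, and vertical identities and composites of $2$-cells are preserved as noted in the proof of Proposition~\ref{twocatmor}. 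Since $\K$ is a $(2,1)$-category its hom-categories are locally discrete, so these hom-functors carry no internal coherence, and the whole trihomomorphism structure is concentrated in the compositor $\chi$, the unitor $\iota$ and the modifications $\omega,\gamma,\delta$.

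For $\chi$, fix composable $g\:\cW\to\cZ$ and $f\:\cZ\to\cY$ in $\K$ and $(\cP,\alpha_\cP)\in\qst{\cX}{\cG}(\cY)$. The two $2$-functors $\qst{\cX}{\cG}(g)\c\qst{\cX}{\cG}(f)$ and $\qst{\cX}{\cG}(f\c g)$ send $(\cP,\alpha_\cP)$ to the iterated iso-comma object $(\cP\x[\cY]\cZ)\x[\cZ]\cW$ and to the single iso-comma object $\cP\x[\cY]\cW$ respectively, each with its canonical projection to $\cW$ and with $\alpha_\cP$ precomposed with the evident projection to $\cP$. By Lemma~\ref{lemmabicomma} the outer rectangle of the pasted iso-comma squares again exhibits a bi-iso-comma object of $\pi_\cP$ and $f\c g$, so there is a canonical equivalence between these two objects over $\cW$, compatible with the maps to $\cX$; inducing the comparison $2$-cells through the two-dimensional universal properties and letting $(\cP,\alpha_\cP)$ vary gives an equivalence of $2$-functors $\chi_{f,g}$, pseudonatural in $(\cP,\alpha_\cP)$ and in $f$ and $g$, which one then promotes to an adjoint equivalence in the standard way. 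The unitor $\iota$ is built identically from the canonical equivalence $\cP\x[\cY]\cY\simeq\cP$ exhibiting the iso-comma object of $\pi_\cP$ along $\id{\cY}$.

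The modifications $\omega$, $\gamma$, $\delta$ then compare, componentwise on each $(\cP,\alpha_\cP)$, the two re-associations of a triple iterated iso-comma object $((\cP\x[\cY]\cZ)\x[\cZ]\cW)\x[\cW]\cU$ (for $\omega$) and the two cancellations of an identity iso-comma factor (for $\gamma$ and $\delta$). Each such component is the unique invertible $2$-cell between the two induced morphisms which is compatible with all the structure maps, so it exists and is determined by the two-dimensional universal property of the target iso-comma object, and the same uniqueness shows these $2$-cells form modifications. Finally the two axioms of Definition 3.3.1 of \cite{Gurskithesis} — the associativity coherence for $\omega$ and the unit coherence relating $\omega$, $\gamma$ and $\delta$ — unwind to equalities of $2$-cells into an iso-comma object, and by the uniqueness clause of its universal property it suffices to check that the two sides induce the same data on the defining cone, which holds because every $2$-cell occurring there is itself induced, via Lemma~\ref{lemmabicomma} and Construction~\ref{commact}, from those universal properties.

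Conceptually there is little at stake: $\qst{\cX}{\cG}$ is the one-dimension-up analogue of the pseudofunctor of groupoids sending an object to its category of bundles equipped with an equivariant map to $\cX$, and the entire trihomomorphism structure expresses the single fact that iterated iso-comma objects compose up to coherent equivalence. The step I expect to be the main obstacle is therefore not conceptual but the three-dimensional bookkeeping: assembling the pointwise equivalences of iterated iso-comma objects into genuine pseudonatural adjoint equivalences, and verifying that the resulting $\omega,\gamma,\delta$ satisfy Gurski's axioms, is a long if routine diagram chase that leans throughout on Lemma~\ref{lemmabicomma} and on the uniqueness parts of the relevant universal properties.
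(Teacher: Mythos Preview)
Your strategy is correct and would succeed, but you miss the main simplification in the paper's argument and thereby make the last step harder than it needs to be. The paper does not induce $\omega$, $\gamma$, $\delta$ as generic invertible $2$-cells and then verify Gurski's axioms by uniqueness; it shows that, with the specific choices made, these modifications are \emph{identities}, whence the axioms hold trivially. The point is that $\qst{\cX}{\cG}$ is built from iso-comma objects, which are strict flexible limits with a one-dimensional universal property: the component $(\chi_{f,g})_\cP$ is the \emph{unique} morphism into $\cP\x[\cY]\cT$ determined by the pasted cone, and this strictness forces the two composites appearing in the $\omega$ comparison to agree literally on components. The paper also checks that $\chi_{f,g}$ is genuinely $2$-natural (not merely pseudonatural) in $(\cP,\alpha_\cP)$ and in $(f,g)$, again by the strict universal property; this is what makes the identity-modification argument go through. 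Your route, which only invokes the bi-iso-comma universal property via Lemma~\ref{lemmabicomma}, buys generality you do not need and converts the final line into an actual three-dimensional diagram chase.

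One smaller correction: the sentence ``since $\K$ is a $(2,1)$-category its hom-categories are locally discrete, so these hom-functors carry no internal coherence'' is mis-stated. The $(2,1)$ hypothesis says the $2$-cells of $\K$ are invertible, not absent; the hom-categories are groupoids, not discrete. What is true (and independent of $(2,1)$) is that $\K\op$ viewed as a tricategory has locally discrete hom-\emph{bi}categories, but even so a pseudofunctor out of a $1$-category still carries compositors and unitors. The strictness of the hom-assignments $\qst{\cX}{\cG}_{\cY,\cZ}$ is not automatic from local discreteness; it is established in the paper by directly checking that identities and vertical composites of $2$-cells are preserved on the nose, via the universal property of the iso-comma object.
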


\begin{proof}
	Let $\cY,\cZ$ in $\K$. We prove that the assignment 
	$$\qst{\cX}{\cG}_{\cY,\cZ}\: \K\op (\cY,\cZ) \to\twocat(\qst{\cX}{\cG}(\cY), \qst{\cX}{\cG}(\cZ))$$
	that sends a morphism $f\: \cZ \to \cY$ into $\qst{\cX}{\cG}(f)$ and a 2-cell $\Gamma\: f \Rightarrow g$ into $\qst{\cX}{\cG}(\Gamma)$ is a 2-functor. 
	
	Let $f\: \cZ \to \cY$ be a morphism in $\K$. The 2-natural transformation $\qst{\cX}{\cG}(\id{f})$ has component relative to $(\cP, \alpha_{\cP})\in \qst{\cX}{\cG}(\cY)$ induced by the universal property of the iso-comma object $\cP\xp{\cY}{f}\cZ$ as in the following diagram
	\begin{eqD*}
		\commaunivvN{\cP\xp{\cY}{f}\cZ}{\pi_{\cP}\st f}{f\st \pi_{\cP}}{\qst{\cX}{\cG}(\id{f})_{(\cP,\alpha_{\cP})}}{\cP\xp{\cY}{f}\cZ}{\cP}{\cZ}{\cY}{\pi_{\cP}\st f}{f\st \pi_{\cP}}{\pi_{\cP}}{f}{\alpha_{\cP}^{f}}
		\qquad= \quad
		\begin{cdN}
			{\cP\xp{\cY}{f}\cZ} \arrow[d,"{f\st \pi_{\cP}}"'] \arrow[r,"{\pi_{\cP}\st f}"] \&[5ex] {\cP} \arrow[d,"{\pi_{\cP}}"] \arrow[ld,"{\alpha_{\cP}^{f}}"',shift left=-1.3ex,Rightarrow ,shorten <=5.5ex, shorten >= 5.5ex]\\[3ex]
			{\cZ} \arrow[r,"{f}"',""{name=A}] \& {\cY}
		\end{cdN}
	\end{eqD*}
and so $\qst{\cX}{\cG}(\id{f})_{(\cP,\alpha_{\cP})}$ is equal to $(\id{\qst{\cX}{\cG}(f)})_{(\cP,\alpha_{\cP})}$. This shows that $\qst{\cX}{\cG}_{\cY,\cZ}$ preserves identities.

Let now $f,g,h\: \cZ \to \cY$ be morphisms in $\K$ and $\Gamma\: f \Rightarrow g$ and $\Lambda \: g \Rightarrow h$ be 2-cells in $\K$. The 2-natural transformation $\qst{\cX}{\cG}(\Lambda \c \Gamma)$ has component relative to $(\cP, \alpha_{\cP})\in \qst{\cX}{\cG}(\cY)$ induced by the universal property of the iso-comma object $\cP\xp{\cY}{h}\cZ$ as in the following diagram
\begin{eqD*}
	\commaunivvN{\cP\xp{\cY}{f}\cZ}{\pi_{\cP}\st f}{f\st \pi_{\cP}}{\qst{\cX}{\cG}(\Lambda \c \Gamma)_{(\cP,\alpha_{\cP})}}{\cP\xp{\cY}{h}\cZ}{\cP}{\cZ}{\cY}{\pi_{\cP}\st h}{h\st \pi_{\cP}}{\pi_{\cP}}{h}{\alpha_{\cP}^{h}}
	\qquad= \quad
	\begin{cdN}
		{\cP\xp{\cY}{f}\cZ} \arrow[d,"{f\st \pi_{\cP}}"'] \arrow[r,"{\pi_{\cP}\st f}"] \&[5ex] {\cP} \arrow[d,"{\pi_{\cP}}"] \arrow[ld,"{\alpha_{\cP}^{f}}"',shift left=-1.3ex,Rightarrow ,shorten <=5.5ex, shorten >= 5.5ex]\\[3ex]
		{\cZ} \arrow[r,"{f}",bend left=40 ,""'{name=B}] \arrow[r,"{h}"',""{name=A}] \arrow[from=B,to=A,Rightarrow,"{\Lambda \c \Gamma}"]\& {\cY}
	\end{cdN}
\end{eqD*}
and so $\qst{\cX}{\cG}(\Lambda \c \Gamma)_{(\cP,\alpha_{\cP})}$ is equal to $\qst{\cX}{\cG}(\Lambda)_{(\cP,\alpha_{\cP})} \c \qst{\cX}{\cG}(\Gamma)_{(\cP,\alpha_{\cP})}$. This shows that $\qst{\cX}{\cG}_{\cY,\cZ}$ preserves composition.

Let now $\cY\in \K$. We need to prove that there exists an adjoint equivalence
$$\iota\: \id{ \qst{\cX}{\cG}(\cY)}\Rightarrow \qst{\cX}{\cG}( \id{\cY})$$
Given $(\cP,\alpha_{\cP})\in \qst{\cX}{\cG}(\cY)$, we define 
$$\iota_{(\cP,\alpha_{\cP})}\: \id{ \qst{\cX}{\cG}(\cY)}((\cP,\alpha_{\cP})) \to \qst{\cX}{\cG}(\id{\cY})((\cP,\alpha_{\cP}))$$
as an adjoint equivalence between $(\cP, \alpha_{\cP})$ and $(\cP\xp{\cY}{\id{\cY}}\cY, \alpha_{\cP}\c \pi_{\cP}\st \id{\cY})$ given by the fact that they are both bi-iso-comma objects of  $\pi_{\cP}$ and $\id{\cY}$. 
Then, given a morphism $\phi\: (\cP,\alpha_{\cP}) \to (\cQ\alpha_{\cQ})$ in $\qst{\cX}{\cG}(\cY)$, we can induce the invertible 2-cell
\csq[l][7][7][\iota_{\phi}]{\cP}{\cP \xp{\cY}{\id{\cY}}\cY} {\cQ}{\cQ \xp{\cY}{\id{\cY}}\cY} {\iota_{(\cP,\alpha_{\cP})}} {\phi} {\qst{\cX}{\cG}(\id{\cY})(\phi)} {\iota_{(\cQ,\alpha_{\cQ})}}
using the universal property of the iso-comma object $\cQ \xp{\cY}{\id{\cY}}\cY$ thanks to the following compatible isomorphic 2-cells:
\begin{cd}
	{\cP } \arrow[dd,"{\phi}"'] \arrow[rd,equal] \arrow[r,"{\iota_{(\cP,\alpha_{\cP})}}"] \&[5ex] {\cP \xp{\cY}{\id{\cY}}\cZ} \arrow[d,"{\pi_{\cP}\st \id{\cY}}"] \arrow[r,"{\qst{\cX}{\cG}(\id{\cY})(\phi)}"] \&[5ex] {\cQ \xp{\cY}{\id{\cY}}\cZ} \arrow[dd,"{\pi_{\cQ}\st \id{\cY}}"] \\
	{} \& {\cP} \arrow[rd,"{\phi}"] \& {}\\
	{\cQ } \arrow[rr,"{}",equal, bend left=20] \arrow[r,"{\qst{\cX}{\cG}(\id{\cY})(\phi)}"'] \& {\cQ \xp{\cY}{\id{\cY}}\cZ} \arrow[r,"{\pi_{\cQ}\st \id{\cY}}"'] \& {\cQ}
\end{cd}
and 
\begin{cd}
	{\cP }  \arrow[rrdd,"{ \pi_{\cP}}"]\arrow[dd,"{\phi}"'] \arrow[r,"{\iota_{(\cP,\alpha_{\cP})}}" ,""{name=F}] \&[5ex] {\cP \xp{\cY}{\id{\cY}}\cZ} \arrow[rdd,"{\id{\cY}\st \pi_{\cP}}"] \arrow[r,"{\qst{\cX}{\cG}(\id{\cY})(\phi)}"] \&[5ex] {\cQ \xp{\cY}{\id{\cY}}\cZ} \arrow[dd,"{\id{\cY}\st \pi_{\cQ}}"] \\
	{} \& {} \& {}\\
	{\cQ } \arrow[rr,"\pi_{\cQ}", bend left=20] \arrow[r,"{\qst{\cX}{\cG}(\id{\cY})(\phi)}"',""{name=G,pos=0.2}] \arrow[from=F,to=G,"{\gamma_{\phi}^{-1}}",Rightarrow ,shorten <=9.5ex, shorten >= 6.5ex]\& {\cQ \xp{\cY}{\id{\cY}}\cZ} \arrow[r,"{\id{\cY}\st \pi_{\cQ}}"']\& {\cY}
\end{cd}	
Moreover, it is straightforward to prove that the assignment of the structure 2-cells $\iota_\phi$ is pseudofunctorial and that $\iota$ is an adjoint equivalence.

Let now $\cY,\cZ,\cT\in \K$. We need to prove that there exists an adjoint equivalence
$$\chi\: \otimes' \c (\qst{\cX}{\cG} \x \qst{\cX}{\cG}) \Rightarrow \qst{\cX}{\cG} \c \otimes$$
Given $g\: \cT \to \cZ$ and $f\: \cZ \to \cY$, we need to define a 2-natural transformation
$$\chi_{f,g}\:  \qst{\cX}{\cG}(g) \c \qst{\cX}{\cG}(f) \Rightarrow \qst{\cX}{\cG}(f\c g)$$
Given $(\cP, \alpha_{\cP})\in \qst{\cX}{\cG}(\cY)$, we define
$$(\chi_{f,g})_{(\cP, \alpha_{\cP})}\:  (\qst{\cX}{\cG}(g) \c \qst{\cX}{\cG}(f))((\cP, \alpha_{\cP})) \Rightarrow \qst{\cX}{\cG}(f\c g) ((\cP, \alpha_{\cP}))$$
as an adjoint equivalence between $(\cP \x[\cY]\cZ)\x[\cZ] \cT$ and $\cP\x[\cY]\cT$ given by the fact that they are both iso-comma objects of $\pi_{\cP}$ and $f\c g$. 

Given a morphism $\phi\: (\cP,\alpha_{\cP}) \to (\cQ\alpha_{\cQ})$ in $\qst{\cX}{\cG}(\cY)$, the following square is then commutative
\csq{(\cP\x[\cY]\cZ)\x[\cZ]\cT}{\cP \xp{\cY}{}\cT} {(\cQ\x[\cY]\cZ)\x[\cZ]\cT}{\cQ \xp{\cY}{}\cT} {(\chi_{f,g})_{(\cP,\alpha_{\cP})}} {(\qst{\cX}{\cG}(g) \c \qst{\cX}{\cG}(f))(\phi)} {\qst{\cX}{\cG}(f\c g)(\phi)} {(\chi_{f,g})_{(\cQ,\alpha_{\cQ})}}
thanks to the following commutative diagrams:
\begin{cd}
	{(\cP \x[\cY]\cZ)\x[\cZ]\cT} \arrow[rd,"{(f\st \pi_{\cP})\st g}"] \arrow[ddd,"{(\qst{\cX}{\cG}(f) \c \qst{\cX}{\cG}(g))(\phi)}"']\arrow[rr,"{(\chi_{f,g})_{\cP}}"]\& {} \& {\cP \x[\cY]\cT} \arrow[d,"{\pi\st_{\cP}(f\c g)}"]  \arrow[r,"{ \qst{\cX}{\cG}(f\c g)(\phi)}"]\&[4ex] {\cQ \x[\cY]\cT} \arrow[ddd,"{\pi_{\cQ}\st (f\c g)}"] \\
	{} \& {\cP \x[\cY]\cZ} \arrow[r,"{\pi\st_{\cP}}"]  \arrow[d,"{\qst{\cX}{\cG}(f)(\phi)}"{description}]\& {\cP} \arrow[rdd,"{\phi}"] \& {} \\
	{} \& {\cQ \x[\cY]\cZ} \arrow[rrd,"{\pi\st_{\cQ}}"] \& {} \& {} \\
	{(\cQ \x[\cY]\cZ)\x[\cZ]\cT} \arrow[ru,"{(f\st \pi_{\cQ})\st g}"] \arrow[rr,"{(\chi_{f,g})_{\cQ}}"'] \& {} \& {\cQ \x[\cY]\cT} \arrow[r,"{\pi_{\cQ}\st (f\c g)}"']  \& {\cQ} 
\end{cd}
and 
\begin{cd}
	{(\cP \x[\cY]\cZ)\x[\cZ]\cT} \arrow[rrrd,"{g\st (f\st \pi_{\cP})}"]  \arrow[d,"{(\qst{\cX}{\cG}(f) \c \qst{\cX}{\cG}(g))(\phi)}"']\arrow[rr,"{(\chi_{f,g})_{\cP}}"]\& {} \& {\cP \x[\cY]\cT} \arrow[rd,"{(f\c g)\st \pi_{\cP}}"] \arrow[r,"{ \qst{\cX}{\cG}(f\c g)(\phi)}"]\&[4ex] {\cQ \x[\cY]\cT} \arrow[d,"{(f\c g)\st \pi_{\cQ}}"] \\[7ex]
	{(\cQ \x[\cY]\cZ)\x[\cZ]\cT} \arrow[rrr,"{g\st (f\st \pi_{\cQ})}", bend left=13] \arrow[rr,"{(\chi_{f,g})_{\cQ}}"'] \& {} \& {\cQ \x[\cY]\cT} \arrow[r,"{(f\c g)\st \pi_{\cQ}}"']  \& {\cT} 
\end{cd}

This shows that $\chi_{f,g}$ is 2-natural. Moreover, when $f$ and $g$ vary the $\chi_{f,g}$'s form a 2-natural transformation. Indeed, given morphisms $g,g'\: \cT\to \cZ$ and $f,f'\: \cZ\to \cY$ and 2-cells $\Gamma\: g \Rightarrow g'$ and $\Lambda\: f \Rightarrow f'$ the following equality holds
$$\qst{\cX}{\cG}(\Lambda \star \Gamma) \c \chi_{f,g} = \chi_{f',g'} \c (\qst{\cX}{\cG}(\Lambda) \star \qst{\cX}{\cG}(\Gamma).$$
This is because, for every $(\cP, \alpha_{\cP})\in \qst{\cX}{\cG}(\cY)$, the following diagram is commutative
\begin{cd}
	{(\cP\xp{\cY}{f}\cZ) \xp{\cZ}{g}\cT} \arrow[rr,"{(\chi_{f,g})_{\cP}}"] \arrow[d,"{\qst{\cX}{\cG}(g)((\qst{\cX}{\cG}(\Lambda))_{\cP})}"']\&[6ex] {} \&[3ex] {\cP \xp{\cY}{f\c g}\cT} \arrow[d,"{(\qst{\cX}{\cG}(\Lambda \star \Gamma))_{\cP}}"] \\[5ex]
	{(\cP\xp{\cY}{f'}\cZ) \xp{\cZ}{g}\cT} \arrow[r,"{(\qst{\cX}{\cG}(\Gamma))_{\cP \xp{\cY}{f'}}}"']\& {(\cP\xp{\cY}{f'}\cZ) \xp{\cZ}{g'}\cT} \arrow[r,"{\chi_{f',g'}}"'] \& {\cP \xp{\cY}{f'\c g'}\cT} 
\end{cd}
thanks to the commutativity of 
\begin{cd}
	{(\cP\xp{\cY}{f}\cZ) \xp{\cZ}{g}\cT} \arrow[rd,"{(f\st \pi\st_{\cP})\st g}"{description}] \arrow[r,"{(\chi_{f,g})_{\cP}}"] \arrow[d,"{\qst{\cX}{\cG}(g)((\qst{\cX}{\cG}(\Lambda))_{\cP})}"']\&[4ex] {\cP \xp{\cY}{f\c g}\cT} \arrow[rddd,"{\pi\st_{\cP}(f\c g)}", bend left=15] \arrow[r,"{(\qst{\cX}{\cG}(\Lambda \star \Gamma))_{\cP}}"] \&[4ex] {\cP \xp{\cY}{f'\c g'}\cT} \arrow[ddd,"{\pi\st_{\cP}(f'\c g')}"]  \\[3ex]
	{(\cP\xp{\cY}{f'}\cZ) \xp{\cZ}{g}\cT} \arrow[rd,"{((f')\st\pi_{\cP})\st g}"{description}] \arrow[dd,"{(\qst{\cX}{\cG}(\Gamma))_{\cP \xp{\cY}{f'}}}"'] \& {\cP\xp{\cY}{f}\cZ} \arrow[rdd,"{\pi\st_{\cP}f}", bend left=15] \arrow[d,"{\qst{\cX}{\cG}(\Lambda)_{\cP}}"{description}] \& {} \\
	{} \& {\cP\xp{\cY}{f'}\cZ} \arrow[rd,"{\pi\st_{\cP}f'}"]\& {} \\
	{(\cP\xp{\cY}{f'}\cZ) \xp{\cZ}{g'}\cT} \arrow[ru,"{((f')\st \pi_{\cP})\st g'}"] \arrow[r,"{\chi_{f',g'}}"'] \& {\cP \xp{\cY}{f'\c g'}\cT} \arrow[r,"{\pi\st_{\cP}(f'\c g')}"'] \& {\cP} 
\end{cd}
and  
\begin{cd}
	{(\cP\xp{\cY}{f}\cZ) \xp{\cZ}{g}\cT} \arrow[rrdd,"{g\st (f\st \pi_{\cP})}"{description}, bend left=10] \arrow[r,"{(\chi_{f,g})_{\cP}}"] \arrow[d,"{\qst{\cX}{\cG}(g)((\qst{\cX}{\cG}(\Lambda))_{\cP})}"']\&[4ex] {\cP \xp{\cY}{f\c g}\cT} \arrow[rdd,"{(f\c g)\st \pi_{\cP}}"{description}, bend left=10] \arrow[r,"{(\qst{\cX}{\cG}(\Lambda \star \Gamma))_{\cP}}"] \&[4ex] {\cP \xp{\cY}{f'\c g'}\cT} \arrow[dd,"{\pi\st_{\cP}(f'\c g')}"] \\
	{(\cP\xp{\cY}{f'}\cZ) \xp{\cZ}{g}\cT} \arrow[rrd,"{g\st ((f')\st \pi_{\cP}}"{description}, bend left=10] \arrow[d,"{(\qst{\cX}{\cG}(\Gamma))_{\cP \xp{\cY}{f'}}}"'] \& {} \& {} \\
	{(\cP\xp{\cY}{f'}\cZ) \xp{\cZ}{g'}\cT} \arrow[rr,"{(g')\st((f')\st\pi_{\cP})}", bend left=10]\arrow[r,"{\chi_{f',g'}}"'] \& {\cP \xp{\cY}{f'\c g'}\cT} \arrow[r,"{\pi\st_{\cP}(f'\c g')}"'] \& {\cT} 
\end{cd}
This shows that $\chi$ is an adjoint equivalence. 

Consider now a chain of morphisms 
$$\cW \ar{h} \cT \ar{g} \cZ \ar{f} \cY$$
in $\K$. The following equality of 2-natural transformation holds
$$\chi_{f,g\c h} \c (\chi_{g,h} \star \qst{\cX}{\cG} (f)) = \chi_{f\c g,h} \c (\qst{\cX}{\cG}(h) \star \chi_{f,g})$$
because it holds on components thanks to the compatibility between the iso-comma objects along the composite of three morphisms and the pasting of the three iso-comma objects. This shows that the invertible modification $\omega$ required in Definition \ref{trihomomorphism} is the identity for $\qst{\cX}{\cG}$.

Moreover, the invertible modification $\gamma$ of Definition \ref{trihomomorphism}, is the identity as well. Indeed, given a morphism $\cZ\ar{f}\cY$ in $\K$, the pseudonatural transformations $\chi_{f,\id{\cZ}} \c (\iota \star  \qst{\cX}{\cG}(f))$ and $\id{ \qst{\cX}{\cG}(f)}$ have the same components by the universal properties of the iso-comma object and the same structure 2-cells thanks to the fact that the structure 2-cells of $\iota$ of the form $\iota_{ \qst{\cX}{\cG}(f)(\phi)}$ with $\phi$ a morphism in $\qst{\cX}{\cG}(\cY)$ are identities by definition of $\qst{\cX}{\cG}(f)$. Analogously, the invertible modification $\delta$ of Definition \ref{trihomomorphism} is the identity as well.

Finally, the axioms of trihomomorphisms are trivially satisfied by $\qst{\cX}{\cG}$ since all the modifications involved are identities.
\end{proof}

Analogously to the one-dimensional case, we can consider the particular case of $[\cT/\cG]$ where $\cT$ is the terminal object of $\K$ and so the  morphisms with target $\cT$ are uniquely determined and the  2-category $\qst{\cT}{\cG}(\cY)$ is isomorphic to $\twoBun{\cG}{\cY}$. 

\begin{defne} \label{class2}
	 The quotient pre-2-stack $\qst{\cT}{\cG}$ is called \dfn{classifying pre-2-stack} and will be denoted $2\clst{G}$.
\end{defne}

In the remaining part of this section we will prove that, if the $(2,1)$-category $\K$ is bicocomplete and such that iso-comma objects preserve bicolimits and the bitopology $\tau$ is subcanonical, the quotient pre-2-stacks  of Definition \ref{qp2} are 2-stacks in the sense of \cite{2stacks}. To prove this important result, we will use the following explicit characterization of 2-stack that we prove in \cite{2stacks}.

\begin{teor}[\cite{2stacks}, Theorem 3.18 ]\label{char2stacks}
	Let $(\K,\tau)$ be a bisite. A trihomomorphism $F\: \K\op \to \Bicat$ is a 2-stack if and only if for every $C\in \K$ and every covering bisieve $S\in \tau(C)$ the following conditions are satisfied:
	\begin{itemize}
		\item [(O)] every weak descent datum for $S$ of elements of $F$ is weakly effective;
		\item [(M)] every descent datum for $S$ of morphisms of $F$ is effective;
		\item [(2C)] every matching family for $S$ of 2-cells of $F$ has a unique amalgamation.
	\end{itemize}
\end{teor}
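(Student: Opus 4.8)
The plan is to unwind the definition of 2-stack from \cite{2stacks} and to recognise the three gluing conditions (O), (M), (2C) as the three separate levels at which a homomorphism of bicategories can fail to be a biequivalence. By definition, $F$ is a 2-stack when, for every $C\in\K$ and every covering bisieve $S\in\tau(C)$, the canonical comparison homomorphism
$$\rho_S\: F(C) \to \opn{Desc}(S,F)$$
from the value of $F$ at $C$ into the descent bicategory $\opn{Desc}(S,F)$, i.e.\ the appropriate weighted bilimit of $F$ along $S$, is a biequivalence. So the first step is to make $\opn{Desc}(S,F)$ explicit: I would check that its objects are precisely the weak descent data for $S$ of elements of $F$, that its $1$-cells are precisely the descent data of morphisms, and that, for fixed $1$-cells, its $2$-cells are precisely the matching families of $2$-cells, with composition and coherence inherited from the bicategorical limit. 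The comparison $\rho_S$ then sends an object, morphism, or $2$-cell of $F(C)$ to the descent datum, descent morphism, or matching family obtained by restricting along the legs of $S$.

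Second, I would invoke the standard Whitehead-type criterion for biequivalences of bicategories: a homomorphism $\rho\: \B \to \B'$ is a biequivalence if and only if it is biessentially surjective on objects and a local equivalence, i.e.\ for all $a,b$ the induced functor $\rho_{a,b}\: \B(a,b)\to \B'(\rho a,\rho b)$ is an equivalence of categories, and every object of $\B'$ is equivalent to one in the image of $\rho$. Applied to $\rho_S$, and splitting the local condition into essential surjectivity and full faithfulness of each $(\rho_S)_{a,b}$, this reduces the 2-stack property to three independent statements about $\rho_S$.

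Third, and this is the heart of the argument, I would match each level of the criterion with the corresponding gluing condition. Biessential surjectivity of $\rho_S$ says exactly that every object of $\opn{Desc}(S,F)$, i.e.\ every weak descent datum, is equivalent to $\rho_S(a)$ for some $a\in F(C)$, i.e.\ is weakly effective; this is condition (O). Local essential surjectivity of $\rho_S$ says that, for fixed elements of $F(C)$ playing the role of source and target (which by (O) we may take to be the images of genuine objects), every $1$-cell of $\opn{Desc}(S,F)$, i.e.\ every descent datum of morphisms, is isomorphic to the image of an honest morphism of $F(C)$, i.e.\ is effective; this is condition (M). Finally, local full faithfulness of $\rho_S$ says that each $(\rho_S)_{a,b}$ is a bijection on $2$-cells, which is precisely the existence and uniqueness of an amalgamation for every matching family of $2$-cells, fullness giving existence and faithfulness giving uniqueness; this is condition (2C). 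Assembling the three equivalences yields the stated \emph{if and only if}.

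The main obstacle will be the bookkeeping in the first step: establishing that $\opn{Desc}(S,F)$, described as a weighted bilimit, has cells that literally coincide with the weak descent data, descent morphisms, and matching families used to phrase (O), (M), (2C). Because $F$ is only a trihomomorphism into $\Bicat$, these descent data carry coherence $2$- and $3$-cells (cocycle conditions up to invertible modifications), and one must verify that the bilimit description and the hands-on descent description agree, including that weak effectiveness at the object level is measured by equivalences rather than isomorphisms. Once this identification is in place, the passage through the biequivalence criterion is formal.
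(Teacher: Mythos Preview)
The paper does not contain a proof of this theorem: it is quoted verbatim as Theorem~3.18 of \cite{2stacks} and used as a black box in the proof of Theorem~\ref{aretwostacks}. So there is no proof in the present paper to compare your proposal against.

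That said, your outline is the expected strategy for such a characterization and is almost certainly what \cite{2stacks} does: identify $\opn{Desc}(S,F)$ explicitly so that its objects, $1$-cells and $2$-cells are the weak descent data, descent data of morphisms, and matching families respectively, and then decompose the biequivalence condition on $\rho_S$ via the Whitehead criterion into biessential surjectivity, local essential surjectivity, and local full faithfulness, matching these with (O), (M), (2C). Your identification of the main obstacle is also correct: the substantive work lies in showing that the bilimit description of the descent bicategory agrees on the nose with the hands-on descent data used to phrase the three conditions, and this is where the coherence bookkeeping for a trihomomorphism into $\Bicat$ (rather than a strict $2$-functor into $\Cat$) becomes nontrivial. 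One small point to watch: in the step for (M) you write ``which by (O) we may take to be the images of genuine objects''; the characterization should hold level by level without assuming the other conditions, so local essential surjectivity of $\rho_S$ must be phrased for arbitrary $a,b\in F(C)$ and arbitrary descent morphisms between $\rho_S(a)$ and $\rho_S(b)$, not just for descent morphisms between arbitrary descent data.
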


 Moreover, we will need the crucial result that every object of a subcanonical bisite is the sigma-bicolimit of each covering bisieve over it. We prove this result in \cite{2stacks}.

\begin{teor}[\cite{2stacks},Theorem 2.24]\label{teorsigmabicolimbisieves}
	Let $\tau$ be a subcanonical bitopology and let $S\: R \Rightarrow y(C)$ be a covering bisieve over $C$. Then $S$ is a sigma-bicolim bisieve, that is 
$$C=\sigmabicolim{F}$$
where $F\: \Groth{R}\to \K$ is the 2-functor of  projection to the first component.
\end{teor}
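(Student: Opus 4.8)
The plan is to derive the statement from the universal property of $\sigmabicolim{F}$ together with the definition of subcanonicity, recognising both as two faces of the notion of descent datum for the covering bisieve $S$. Write $y(Z)=\K(-,Z)\:\K\op\to\Cat$ for $Z\in\K$ (valued in $\Gpd$ when $\K$ is a $(2,1)$-category), let $F\:\Groth{R}\to\K$ be the projection, and note that there is a canonical bicocone under $F$ with vertex $C$ whose leg at an object $(U,f)$ of $\Groth{R}$ is the morphism $f\:U\to C$ itself; what must be shown is that this bicocone is couniversal in the $\sigma$-weighted sense.

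First I would record the defining bi-universal property of the $\sigma$-weighted bicolimit: for every $Z\in\K$ there is an equivalence of categories $\K(\sigmabicolim{F},Z)\simeq\sigma\mbox{-}\mathrm{Cocone}(F,Z)$, pseudonatural in $Z$, where $\sigma\mbox{-}\mathrm{Cocone}(F,Z)$ denotes the $\sigma$-bilimit over $(\Groth{R})\op$ of the diagram $(U,f)\mapsto\K(U,Z)$. Next, because $\Groth{R}$ is the 2-category of elements of $R$, a direct comparison of the definitions identifies a $\sigma$-cocone under $F$ with vertex $Z$ with a weak descent datum for $S$ of elements of the 2-presheaf $y(Z)$ in the sense of \cite{2stacks}: an element of $y(Z)(U)=\K(U,Z)$ for each $f\:U\to C$ in $S$, together with comparison 2-cells indexed by the morphisms of $\Groth{R}$ — invertible precisely on the cartesian morphisms, which is exactly where a weak descent datum demands invertibility — subject to the same coherence conditions. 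Hence $\sigma\mbox{-}\mathrm{Cocone}(F,Z)\simeq\mathrm{Desc}(S,y(Z))$, the descent 2-category of $S$ valued in $y(Z)$. Finally, subcanonicity of $\tau$ says exactly that every representable $y(Z)$ is a stack for $\tau$ in the sense of Street \cite{Streetcharbicatstacks} (equivalently, a 2-stack in the sense of \cite{2stacks}); applying the stack condition to the covering bisieve $S$ gives that the canonical comparison $\K(C,Z)=y(Z)(C)\to\mathrm{Desc}(S,y(Z))$ is an equivalence. Composing these three equivalences produces a pseudonatural equivalence $\K(C,Z)\simeq\K(\sigmabicolim{F},Z)$, which by the bicategorical Yoneda lemma is represented by an equivalence $C\simeq\sigmabicolim{F}$; tracing through the identifications shows that it is induced by the canonical bicocone, so that bicocone exhibits $C$ as $\sigmabicolim{F}$ and $S$ is a sigma-bicolim bisieve.

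The main obstacle is the middle step — identifying $\sigma$-cocones with weak descent data, and hence the $\sigma$-bilimit $\sigma\mbox{-}\mathrm{Cocone}(F,Z)$ with the descent 2-category $\mathrm{Desc}(S,y(Z))$. This is where the particular shape of the weight is forced: the $\sigma$-class on $\Groth{R}$ must be taken to consist of the cartesian morphisms, so that the comparison 2-cells of a $\sigma$-cocone are invertible exactly where a weak descent datum requires and merely directed elsewhere; getting this correspondence right, together with its coherence conditions and the pseudo- versus lax-natural bookkeeping in $[\K\op,\Cat]$, is the technical heart of the argument. It simplifies when $\K$ is a $(2,1)$-category, since then every morphism of $\Groth{R}$ may be declared a $\sigma$-morphism and the stack condition of Street coincides on the nose with the $\sigma$-bilimit description of descent. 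The calculus of two-dimensional colimits, and in particular of colimits in slices, developed in \cite{Mesiticolimits} and \cite{Mesitithesis}, is the natural tool for carrying this comparison out in detail.
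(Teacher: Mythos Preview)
The paper does not actually prove this theorem: it is quoted from \cite{2stacks} (as Theorem~2.24 there) and stated here without argument, to be used as an ingredient in the proof of Theorem~\ref{aretwostacks}. There is therefore no proof in the present paper to compare your proposal against.

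That said, your outline is the natural strategy and is almost certainly what \cite{2stacks} does. The three-step decomposition --- the bi-universal property of $\sigmabicolim{F}$, the identification of $\sigma$-cocones under $F$ with vertex $Z$ with descent data for $S$ valued in $y(Z)$, and then the stack condition on representables supplied by subcanonicity --- is exactly right, and you have correctly isolated the one genuinely delicate point: the choice of $\sigma$-class on $\Groth{R}$ (the cartesian morphisms) so that the invertibility requirements on the structure 2-cells of a $\sigma$-cocone line up with those of a descent datum. Your observation that this subtlety evaporates in the $(2,1)$-categorical case, which is the only case the present paper actually uses, is also correct.
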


Another important result proven in \cite{2stacks} that will be helpful for the proof is the following.

\begin{prop}[\cite{2stacks},Proposition 2.26]\label{coconofstar}
	Let $\K$ be a small 2-category with iso-comma objects and let $\tau$ be a subcanonical bitopology on it. Let then $S\: R \Rightarrow \K(-,Y)$ be a covering bisieve over $Y\in\K$ and $f\: X\to Y$ be a morphism in $\K$. Then $X$ is the sigma-bicolimit of the 2-functor 
	$$F\:  \Grothdiag{R} \ar{\opn{inc}} \laxslice{\K}{Y} \ar{f\st} \laxslice{\K}{X}  \ar{\opn{dom}} \K$$
	where $\opn{inc}=\Groth{S}\: \Groth{R} \to \Groth{\K(-,Y)}$ is the inclusion 2-functor  and $f\st$ is the 2-functor of iso-comma object along the morphism $f$. 
\end{prop}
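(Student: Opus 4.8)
The plan is to reduce the statement to Theorem~\ref{teorsigmabicolimbisieves}, but applied over $X$ rather than over $Y$. Let $f\st S$ denote the pullback bisieve of $S$ along $f$, that is, the bisieve on $X$ generated by the iso-comma projections $f\st h\: X\x[Y]U\to X$ as $(h\: U\to Y)$ ranges over $S$; by the stability (base-change) axiom of the bitopology $\tau$ one has $f\st S\in\tau(X)$. Write $f\st R$ for the associated sub-2-presheaf of $\K(-,X)$, so that $\Groth{f\st R}$ is the full sub-2-category of $\laxslice{\K}{X}$ spanned by the morphisms belonging to $f\st S$. Theorem~\ref{teorsigmabicolimbisieves} then gives $X=\sigmabicolim{G}$, where $G\: \Groth{f\st R}\to\K$ is the projection to the first component.

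The core of the argument is to factor the 2-functor $F$ of the statement as $F=G\c J$ for a $\sigma$-cofinal comparison 2-functor $J\: \Groth{R}\to\Groth{f\st R}$, so that $\sigmabicolim{F}=\sigmabicolim{G\c J}=\sigmabicolim{G}=X$, the middle equality being exactly $\sigma$-cofinality. The 2-functor $J$ is the corestriction of the iso-comma 2-functor $f\st\: \laxslice{\K}{Y}\to\laxslice{\K}{X}$ along $\opn{inc}\: \Groth{R}\ito\laxslice{\K}{Y}$: on objects $J(U,h)=(X\x[Y]U,\,f\st h)$, which lands in $\Groth{f\st R}$ since $f\st h$ is by definition a generator of $f\st S$, and on morphisms and 2-cells $J$ is the universally induced comparison of iso-comma objects. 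With this definition $F=\opn{dom}\c f\st\c\opn{inc}$ equals $G\c J$ on the nose, because $G$ is the domain 2-functor and $G(J(U,h))=X\x[Y]U=F(U,h)$ (and likewise on cells). So everything reduces to the $\sigma$-cofinality of $J$, which is the main obstacle.

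I would deduce this from a cofinality theorem for $\sigma$-weighted bicolimits — of the kind provided by the calculus of two-dimensional slices of Mesiti~\cite{Mesiticolimits,Mesitithesis}, or by a direct adaptation of the bicategorical cofinality theorem — reducing $\sigma$-cofinality of $J$ to checking that, for each $(V,k)\in\Groth{f\st R}$, the comma 2-category $(V,k)/J$ is $\sigma$-bicontractible. To see this: the morphism $k\: V\to X$ lies in $f\st S$, hence (by the description of $f\st S$ as generated by the $f\st h$) factors, up to an invertible 2-cell, as $V\to X\x[Y]U\xrightarrow{\,f\st h\,}X$ for some $(h\: U\to Y)\in S$; this factorization datum is precisely an object of $(V,k)/J$, so the comma 2-category is non-empty. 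Given two such factorizations through $(U,h)$ and $(U',h')$ in $S$, the iso-comma $U\x[Y]U'$ again maps to $Y$ through a morphism in $S$ (bisieves being closed under precomposition, hence under iso-comma objects), so $(U\x[Y]U',\,U\x[Y]U'\to Y)\in\Groth{R}$; using Lemma~\ref{lemmabicomma} to identify $X\x[Y](U\x[Y]U')\cong(X\x[Y]U)\x[X](X\x[Y]U')$, the iso-comma projections out of this object together with its universal property yield an object of $(V,k)/J$ mapping to both given ones, and the same construction handles pairs of parallel morphisms. Thus $(V,k)/J$ is bifiltered, hence $\sigma$-bicontractible, and $X=\sigmabicolim{F}$ follows.

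Everything apart from the cofinality step is formal: stability makes $f\st S$ covering, closure of bisieves under iso-comma objects makes the comparison and the filteredness argument run, Lemma~\ref{lemmabicomma} supplies the needed identifications of iterated iso-commas, and Theorem~\ref{teorsigmabicolimbisieves} does the rest. The only genuinely technical tasks are pinning down the precise $\sigma$-cofinality criterion in the two-categorical setting and carrying out the 2-cell bookkeeping in the comma 2-categories $(V,k)/J$. An alternative, more hands-on route would verify directly that the cocone $\bigl(f\st h\: X\x[Y]U\to X\bigr)_{(U,h)\in\Groth{R}}$ has the universal property defining $\sigmabicolim{F}$, invoking Theorem~\ref{teorsigmabicolimbisieves} over $X$ at the crucial step — but this merely repackages the same cofinality computation.
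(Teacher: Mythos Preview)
The paper does not contain a proof of this proposition: it is quoted verbatim as Proposition~2.26 of \cite{2stacks} and used as a black box in the proof of Theorem~\ref{aretwostacks}. There is therefore nothing in the present paper to compare your argument against.

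That said, your strategy is a natural one and is essentially how such statements are typically established. Reducing to Theorem~\ref{teorsigmabicolimbisieves} for the pullback bisieve $f\st S$ over $X$ and then transferring the bicolimit along a $\sigma$-cofinal comparison $J\:\Groth{R}\to\Groth{f\st R}$ is the expected route. Two points deserve care. First, you correctly flag that the precise $\sigma$-cofinality criterion in this setting is not standard textbook material; you would need to cite or prove the relevant statement (Mesiti's work \cite{Mesiticolimits,Mesitithesis}, which the paper already invokes, is the right place to look). Second, your bifilteredness sketch for $(V,k)/J$ is on the right track but compressed: given two factorizations of $(V,k)$ through $J(U,h)$ and $J(U',h')$, you need not only the object $(U\x[Y]U',\,h\c\pr{1})\in\Groth{R}$ but also a compatible lift $V\to X\x[Y](U\x[Y]U')$ covering both original factorizations; this is exactly what the identification $X\x[Y](U\x[Y]U')\simeq(X\x[Y]U)\x[X](X\x[Y]U')$ from Lemma~\ref{lemmabicomma} provides, together with the universal property of the iso-comma over $X$. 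The 2-cell layer of the bifilteredness argument (connecting parallel morphisms in the comma) requires the same kind of bookkeeping and should not be left implicit.
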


Finally, we will need to apply the results about colimits in lax slices proved by Mesiti in \cite{Mesiticolimits} and more in detail in the case of sigma-bicolimits in his PhD thesis \cite{Mesitithesis}.

\begin{teor}\label{aretwostacks}
	Let $\K$ be a bicocomplete $(2,1)$-category with finite flexible limits and such that iso-comma objects preserve bicolimits. Let then $\tau$ be a subcanonical bitopology on $\K$ and $\cX,\cG\in \K$  with $\cG$ an internal 2-group. Then the quotient pre-2-stack $\qst{\cX}{\cG}$ is a 2-stack.
\end{teor}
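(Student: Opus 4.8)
The plan is to deduce the statement from the explicit characterisation of 2-stacks in Theorem~\ref{char2stacks}. Since $\qst{\cX}{\cG}$ is a trihomomorphism $\K\op\to\twocat$, composing with the inclusion $\twocat\hookrightarrow\Bicat$ makes it a trihomomorphism into $\Bicat$, so it suffices to verify conditions (O), (M) and (2C) for every $C\in\K$ and every covering bisieve $S\colon R\Rightarrow\K(-,C)$ in $\tau(C)$. (Size issues are dealt with in the usual way, by passing to a small full sub-$2$-category of $\K$ closed under the finitely many limits and colimits involved, so that Proposition~\ref{coconofstar} applies.) The backbone of all three verifications is the identification $C=\sigmabicolim{F}$ provided by Theorem~\ref{teorsigmabicolimbisieves}, where $F\colon\Groth{R}\to\K$ is the projection to the first component, together with Mesiti's calculus of sigma-bicolimits in lax slices from \cite{Mesiticolimits,Mesitithesis}, which will be used to compute the bicolimits appearing below.

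\emph{Condition (O).} A weak descent datum for $S$ of elements of $\qst{\cX}{\cG}$ consists, for each object $f\colon\cU\to C$ of $\Groth{R}$, of a principal $\cG$-2-bundle $\pi_{\cP_f}\colon\cP_f\to\cU$ together with a $\cG$-equivariant $\alpha_f\colon\cP_f\to\cX$, and, for each morphism of $\Groth{R}$, of a $\cG$-equivariant equivalence $\qst{\cX}{\cG}(h)(\cP_f,\alpha_f)\simeq(\cP_g,\alpha_g)$ (that is, $\cP_f\x[\cU]\cW\simeq\cP_g$ over $\cW$, compatibly with the maps to $\cX$), plus coherence $2$-cells and their cocycle conditions. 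Unwinding the iso-comma projections, this packages the $\cP_f$'s into a (pseudo) $2$-functor $\cP_\bullet\colon\Groth{R}\to\K$, the projections $\pi_{\cP_f}$ into a pseudo-natural transformation $\cP_\bullet\Rightarrow F$, the actions into a map $\cG\x\cP_\bullet\Rightarrow\cP_\bullet$, and the $\alpha_f$'s into $\cP_\bullet\Rightarrow\Delta\cX$. Using bicocompleteness of $\K$, set $\cP\deq\sigmabicolim{\cP_\bullet}$. The transformation $\cP_\bullet\Rightarrow F$ induces $\pi_\cP\colon\cP\to C$; since iso-comma objects, and in particular products $\cG\x-$, preserve bicolimits, $\cG\x\cP\simeq\sigmabicolim{(\cG\x\cP_\bullet)}$, so the maps $\cG\x\cP_f\to\cP_f$ glue to an action $p\colon\cG\x\cP\to\cP$; and $\cP_\bullet\Rightarrow\Delta\cX$ induces $\alpha\colon\cP\to\cX$, with $\pi_\cP$ and $p$ compatible and $\alpha$ a $\cG$-equivariant morphism — equivariance of all induced $1$- and $2$-cells being an instance of Proposition~\ref{liftsbiisocomma} combined with Proposition~\ref{pseudomonad}.

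It remains to see that $(\cP,p,\alpha)$ lies in $\qst{\cX}{\cG}(C)$, i.e.\ that $\pi_\cP$ is $2$-locally trivial, and that it restricts back to the given datum. For the latter, fix $f\colon\cU\to C$ in $S$. Because iso-comma objects preserve bicolimits, $\cP\x[C]\cU\simeq\sigmabicolim{}$ of the diagram sending $(g\colon\cW\to C)\mapsto\cP_g\x[C]\cU$; by pasting of iso-commas $\cP_g\x[C]\cU\simeq\cP_g\x[\cW](\cW\x[C]\cU)$, and since $\cW\x[C]\cU\to C$ lies in $S$ the compatibility of the descent datum identifies this with $\cP_f\x[\cU](\cW\x[C]\cU)$; using preservation of bicolimits once more together with Proposition~\ref{coconofstar} (which gives $\cU\simeq\sigmabicolim{}$ of $(g\colon\cW\to C)\mapsto\cW\x[C]\cU$) we obtain $\cP\x[C]\cU\simeq\cP_f\x[\cU]\cU\simeq\cP_f$, $\cG$-equivariantly and compatibly with $\alpha$, and one checks these equivalences respect the descent structure; hence $\qst{\cX}{\cG}(f)(\cP,p,\alpha)\simeq(\cP_f,\alpha_f)$ and the datum is weakly effective. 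For $2$-local triviality, each $\cP_f$ is a principal $\cG$-2-bundle over $\cU$, so there is a covering bisieve $T_f\in\tau(\cU)$ trivialising it; the bisieve over $C$ generated by $\{\cZ\to\cU\xrightarrow{f}C:\ f\in S,\ \cZ\to\cU\in T_f\}$ is covering by the transitivity axiom of $\tau$, and along any of its members $\cZ\to\cU\to C$ one has $\cP\x[C]\cZ\simeq(\cP\x[C]\cU)\x[\cU]\cZ\simeq\cP_f\x[\cU]\cZ\simeq\cG\x\cZ$ via a $\cG$-equivariant equivalence. Thus $\pi_\cP$ is $2$-locally trivial, so $(\cP,p,\alpha)\in\qst{\cX}{\cG}(C)$.

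\emph{Conditions (M) and (2C), and the main obstacle.} These use only the $1$- and $2$-dimensional universal properties of the sigma-bicolimit $\cP=\sigmabicolim{\cP_\bullet}$: a descent datum of morphisms consists of morphisms $\phi_f\colon(\cP_f,\alpha_f)\to(\cQ_f,\beta_f)$ in $\qst{\cX}{\cG}(\cU)$ compatible with the restriction equivalences; composing with the cocone maps $\cQ_f\to\cQ$ yields a cocone on $\cP_\bullet$, hence an induced $\phi\colon\cP\to\cQ$, which one checks is a $\cG$-equivariant morphism of bundles commuting with $\alpha,\beta$ and restricting to the $\phi_f$'s, so it is effective; a matching family of $2$-cells $\Gamma_f\colon\phi_f\Rightarrow\psi_f$ corresponds, via the $2$-dimensional universal property, to a unique $\Gamma\colon\phi\Rightarrow\psi$ with the prescribed restrictions, automatically a $2$-cell of bundles, which is the required unique amalgamation. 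By Theorem~\ref{char2stacks}, $\qst{\cX}{\cG}$ is a $2$-stack. The substantive point is condition (O), and inside it the proof that the glued $\cP$ is $2$-locally trivial: this is where all three hypotheses enter together — bicocompleteness to form $\cP$, preservation of bicolimits by iso-comma objects to commute pullbacks past the gluing colimit, subcanonicity (through Theorem~\ref{teorsigmabicolimbisieves} and Proposition~\ref{coconofstar}) to recognise $C$ and the $\cU$'s as the relevant bicolimits, and transitivity of $\tau$ to refine the covering bisieve to a trivialising one. The second, more laborious obstacle is propagating the coherence data of the descent datum through Mesiti's lax-slice sigma-bicolimit calculus and verifying at each stage that the induced $1$- and $2$-cells are $\cG$-equivariant; the latter is routine given Propositions~\ref{pseudomonad} and~\ref{liftsbiisocomma}, but it accounts for the bulk of a full proof.
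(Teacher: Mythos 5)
Your proposal is correct and follows essentially the same route as the paper: reduction via Theorem \ref{char2stacks}, the sigma-bicolimit presentations from Theorem \ref{teorsigmabicolimbisieves} and Proposition \ref{coconofstar}, Mesiti's calculus of colimits in lax slices, and the hypothesis that iso-comma objects preserve bicolimits to commute restriction past the gluing colimit. The only notable organizational difference is in condition (O), where you derive 2-local triviality of the glued bundle from the previously established restriction equivalences $\cP\x[C]\cU\simeq\cP_f$, whereas the paper computes $l\st \pi$ directly as a sigma-bicolimit over the refined covering bisieve obtained from transitivity; both come down to the same computation, and the coherence verifications you defer are exactly the pasting-diagram work that occupies the bulk of the paper's proof.
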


\begin{oss}
	The hypothesis that iso-comma objects preserve bicolimits in $\K$ means that for every morphism $a\: \cT \to \cW$ in $\K$ the 2-functor 
	$$a\st\: \laxslice{\K}{\cW} \to \laxslice{\K}{\cT}$$
	of iso-comma along  the morphism $a$ preserves bicolimits. This hypothesis will be crucial in our proof of Theorem \ref{aretwostacks}.
\end{oss}

\begin{proof}
Let $\cY\in \K$ and let $S\: R \Rightarrow \K(-,\cY)$ be a covering bisieve over $\cY$. Thanks to Theorem \ref{char2stacks}, it suffices to prove that the following conditions hold: 
	\begin{itemize}
	\item [(O)] every weak descent datum for $S$ of elements of $F$ is weakly effective;
	\item [(M)] every descent datum for $S$ of morphisms of $F$ is effective;
	\item [(2C)] every matching family for $S$ of 2-cells of $F$ has a unique amalgamation.
\end{itemize}
We start proving (2C). 

Let $\sigma,\rho\: (\cP, \alpha_{\cP}) \to (\cQ, \alpha_{\cQ})$ be morphisms in $\qst{\cX}{\cG}(\cY)$. Consider a matching family for $S$ of 2-cells of $\qst{\cX}{\cG}$ that assigns to each morphism $f\: \cZ\to \cY\in S$ a 2-cell 
$$w_f\: \qst{\cX}{\cG}(f)(\sigma) \Rightarrow \qst{\cX}{\cG}(f)(\rho)$$
We need to prove that there exists a unique 2-cell
$$w\: \sigma \Rightarrow \rho$$
such that $\qst{\cX}{\cG}(f)(w)=w_f$ for every  $f\: \cZ\to \cY\in S$.

By Proposition \ref{coconofstar}, $\cP$ is the sigma-bicolimit of the 2-functor
$$F\:  \Grothdiag{R} \ar{\opn{inc}} \laxslice{\K}{\cY} \ar{\pi\st_{\cP}} \laxslice{\K}{\cP}  \ar{\opn{dom}} \K$$
with universal sigma-bicocone given by the identical 2-cells of the form
\begin{eqD}{cocuniv}
	\begin{cdN}
		{\cP\xp{\cY}{h}\cW} \arrow[rd,"{\pi\st_{\cP} h}",""{name=A}] \arrow[d,"{F((\id{},\alpha))}"',] \&[10ex] {} \\
		{\cP\xp{\cY}{\wt{l\c g}}\cW}   \arrow[r,"{\pi\st_{\cP}(\wt{l\c g})}"'{pos=0.4}] \arrow[d,"{F((g,\id{}))}"',""{name=C}]\& {\cP} \\
		{\cP\xp{\cY}{l}\cT} \arrow[ru,"{\pi\st_{\cP}l}"',""{name=D}] \& {} 
	\end{cdN}
\end{eqD}
for every morphism $(g,\alpha)\: (\cW, \cW \ar{h} \cY) \to (\cT, \cT \ar{l} \cY)$ in $\Groth{R}$.

For every $f\: \cZ\to \cY\in S$, we consider the 2-cell
\begin{eqD}{duecella}
	\begin{cdN}
		{} \&[5ex] {\cP}  \arrow[rd,"{\sigma}", bend left=35] \&[5ex] {} \\
		{\cP \xp{\cY}{f}\cZ} \arrow[r,"{\qst{\cX}{\cG}(f)(\sigma)}", ,""{name=D}, bend left=35] \arrow[r,"{\qst{\cX}{\cG}(f)(\rho)}"', ,""{name=E}, bend right=35] \arrow[ru,"{\pi\st_{\cP} f}", bend left=35] \arrow[rd,"{\pi\st_{\cP} f}"', bend right=35] \& {\cQ \xp{\cY}{f}\cZ} \arrow[r,"{\pi\st_{\cQ} f}"'] \& {\cQ} \arrow[from=D,to=E,"{w_f}",Rightarrow ,shorten <=1.5ex, shorten >= 1.5ex] \\
		{} \& {\cP}  \arrow[ru,"{\rho}"', bend right=35]\& {}
	\end{cdN}
\end{eqD}
Using the fact that the $w_f$'s are the assignment of a matching family, it is straightforward to prove that the 2-cells of the form (\ref{duecella}) are compatible. In particular, one needs to use the compatibility of the matching family with respect to composition to deal with the morphisms of  $\Groth{R}$ of type $(g,\id{})$ and the compatibility of the matching family with respect to the 2-cells in $S$ to deal with the morphisms of  $\Groth{R}$ of type $(\id{},\alpha)$.
So, by the  two-dimensional universal property of the sigma-bicolimit of $F$, there exists a unique 2-cell $w\: \sigma \Rightarrow \rho$ such that for every $f\: \cZ \to \cY\in S$, the whiskering
\begin{cd}
	{\cP \xp{\cY}{f}\cZ} \arrow[r,"{\pi\st_{\cP} f}"]\& {\cP} \arrow[r,"{\sigma}", bend left=30 ,""'{name=A}] \arrow[r,"{\rho}"', bend right=30 ,""{name=B}] \arrow[from=A,to=B,"{w}", Rightarrow ] \& {\cQ} 
\end{cd}
is equal to the 2-cell of diagram (\ref{duecella}). We now show that $w$ is the desired amalgamation. By definition of $\qst{\cX}{\cG}$, the 2-cell $\qst{\cX}{\cG}(f)(w)$ is the unique that has whiskering with $f\st \pi_{\cQ}$ equal to the identical 2-cell and whiskering with $\pi\st_{\cQ} f$ equal to the 2-cell
\begin{eqD}{F}
\begin{cdN}
	{\cP\xp{\cY}{f}\cZ} \arrow[rd,"{\pi\st_{\cP}}"] \arrow[rr,"{\qst{\cX}{\cG}(f)(\sigma)}"] \arrow[dd,"{\qst{\cX}{\cG}(f)(\rho)}"'] \& {}  \& {\cQ\xp{\cY}{f}\cZ} \arrow[dd,"{\pi\st_{\cQ}}"] \\
	{} \& {\cP} \arrow[rd,"{\sigma}" ,""'{name=A},bend left=30] \arrow[rd,"{\rho}"' ,""{name=B},bend right=30] \arrow[from=A,to=B,"{w}",Rightarrow ] \& {}\\
	{\cQ\xp{\cY}{f}\cZ} \arrow[rr,"{\pi\st_{\cQ}}"']\& {} \&  {\cQ} 
\end{cdN}
\end{eqD}
But the whiskering of $w_f$ with $f\st \pi_{\cQ}$ is equal to the identical 2-cell because $w_f$ is a 2-cell in $\qst{\cX}{\cG}(\cZ)$ and the 2-cells $\gamma_{ \qst{\cX}{\cG}(f)(\sigma)}$ and $\gamma_{ \qst{\cX}{\cG}(f)(\rho)}$ are identities by definition of $\qst{\cX}{\cG}$. Moreover, the whiskering of $w_f$ with $\pi\st_{\cQ}f$, is equal to the 2-cell of diagram (\ref{F}) by construction of $w$. This shows that $\qst{\cX}{\cG}(f)(w)=w_f$ and concludes the proof of the condition (2C).

We now prove the condition $(M)$.

Let $(\cP, \alpha_{\cP}), (\cQ, \alpha_{\cQ})\in \qst{\cX}{\cG}(\cY)$. Consider a descent datum for $S$ of morphisms of $\qst{\cX}{\cG}$ that assigns to every $\cZ \ar{f} \cY\in S$ a morphism 
$$w_f\: \cP\xp{\cY}{f}{\cZ}\to \cQ\xp{\cY}{f}{\cZ}$$
and to every 2-cell $\Gamma\: f \Rightarrow f'$ with $f,f'\: \cZ \to \cY\in S$  an (invertible) 2-cell
	\csq[l][7][7][\eta_{\Gamma}]{\cP\xp{\cY}{f}\cZ} {\cQ\xp{\cY}{f}\cZ}{\cP\xp{\cY}{f'}\cZ} {\cQ\xp{\cY}{f'}\cZ}{w_{f}}{\qst{\cX}{\cG}(\Gamma)_{\cP}}{\qst{\cX}{\cG}(\Gamma)_{\cQ}}{w_{f'}}
Moreover, for every chain $\cZ' \ar{g} \cZ \ar{f} \cY$ with $f\in S$, we have an (invertible) 2-cell
\begin{cd}
	{(\cP\xp{\cY}{f}\cZ)\xp{\cZ}{g}\cZ'} \arrow[r,"{g\st w_f}"]  \arrow[d,"{}",aeq] \&[7ex] {(\cQ\xp{\cY}{f}\cZ)\xp{\cZ}{g}\cZ'} \arrow[d,"{}",aeq]  \arrow[ldd,"{\phi^{f,g}}",Rightarrow ,shorten <=6.5ex, shorten >= 6.5ex]\\[-3ex]
	{\cP \xp{\cY}{f\c g}\cZ'} \arrow[d,"{}",aiso]\& {\cQ \xp{\cY}{f\c g}\cZ'} \arrow[d,"{}",aiso] \\[-3ex]
	{\cP \xp{\cY}{\wt{f\c g}}\cZ'} \arrow[r,"{w_{f\c g}}"'] \&  {\cQ \xp{\cY}{\wt{f\c g}}\cZ'}
\end{cd}
We need to prove that there exist a morphism 
$$w\: \cP \to \cQ$$
in $\qst{\cX}{\cG}(\cY)$ and for every morphism $\cZ \ar{f} \cY\in S$ an (invertible) 2-cell
$$\psi^f\: w_f \Rightarrow \qst{\cX}{\cG}(f)(w)$$
that satisfy the conditions required by the definition of effective descent datum on morphisms (see Definition 3.9 of \cite{2stacks}).
The idea is to induce the morphism $w$ using the universal property of $\cP$ seen as a sigma-bicolimit of a diagram of shape $\Groth{R}$. This is similar to what we did to prove the condition (2C).

As already observed when proving (2C), by Proposition \ref{coconofstar}, $\cP$ is the sigma-bicolimit of the 2-functor
$$F\:  \Grothdiag{R} \ar{\opn{inc}} \laxslice{\K}{\cY} \ar{\pi\st_{\cP}} \laxslice{\K}{\cP}  \ar{\opn{dom}} \K$$
with universal sigma-bicocone as shown in diagram (\ref{cocuniv}).

We construct a sigma-bicocone $\Xi$ of shape $\Groth{R}$ over $\cQ$. Given an object $(\cZ, \cZ\ar{f}\cY)$, we define $\Xi_f$ as the composite 
$$\cP\xp{\cY}{f}\cZ \ar{w_f} \cQ\xp{\cY}{f}\cZ \ar{\pi\st_{\cQ}} \cQ$$ 
 Given a morphism $(g,\id{})\: (\cZ', \wt{f\c g}) \to (\cZ,f)$ in $\Groth{R}$, we define $\Xi_{(g,\id{})}$ as the following 2-cell 
\begin{cd}
	{\cP \xp{\cY}{\wt{f\c g}}\cZ'} \arrow[dd,"{F((g,\id{}))}"']\arrow[r,"{w_{\wt{f\c g}}}"] \& {\cQ \xp{\cY}{\wt{f\c g}}\cZ'} \arrow[dd,"{\widehat{g}_{\cQ}}"] \arrow[rd,"{\pi\st_{\cQ} (\wt{f\c g})}"] \arrow[ldd,"{\kappa_g}",Rightarrow ,shorten <=5.5ex, shorten >= 5.5ex]\&[5ex] {} \\[-3ex]
	{} \& {} \& {\cQ} \\[-3ex]
	{\cP \xp{\cY}{f}\cZ} \arrow[r,"{w_{f}}"]\& {\cQ \xp{\cY}{f}\cZ} \arrow[ru,"{\pi\st_{\cQ} f}"'] \& {} 
\end{cd}
where $\widehat{g}_{\cQ}$ is the morphism induced by the universal property of ${\cQ \xp{\cY}{f}\cZ}$ using the 2-cell 
\begin{cd}
	{\cQ \xp{\cY}{\wt{f\c g}}\cZ} \arrow[rr,"{\pi\st_{\cQ}(\wt{f\c g})}"]  \arrow[d,"{(\wt{f\c g})\st \pi_{\cQ}}"']\& {} \& {\cQ} \arrow[lld,"{\lambda^{\wt{f\c g}}_{\cQ}}"'{inner sep=0.1ex}, Rightarrow ,shorten <=8.5ex, shorten >= 8.5ex, shift left=-2ex] \arrow[d,"{\pi_{\cQ}}"]\\[5ex]
	{\cZ'} \arrow[r,"{g}"'] \arrow[rr,"{\wt{f\c g}}", bend left=35,""'{name=A}] \& {\cZ} \arrow[r,"{f}"'] \arrow[from=A,"{\sigma_{f,g}}"{pos=0.8}, Rightarrow ,]\&  {\cY} 
\end{cd}
and the 2-cell $\kappa_g$ is induced by the two-dimensional universal property of ${\cQ \xp{\cY}{f}\cZ}$ using the compatible 2-cells
\begin{cd}
	{\cP \xp{\cY}{\wt{f\c g}} \cZ'}  \arrow[rrr,"{F((g,\id{}))}"] \arrow[rd,"{}",aiso]  \arrow[ddddd,"{w_{\wt{f\c g}}}"']\&[-5ex] {} \&[-5ex] {} \&[-3ex] {\cP \xp{\cY}{f}\cZ} \arrow[r,"{w_f}"] \& {\cQ \xp{\cY}{f}\cZ} \arrow[ddddd,"{{\pi\st_{\cQ}}f}"] \\[-5ex]
	{} \& {\cP \xp{\cY}{{f\c g}} \cZ'} \arrow[rd,"{}",aeq]  \& {} 	\& {} \& {} \\[-5ex]
	{} \& {} \& {{(\cP \xp{\cY}{{f}} \cZ)\xp{\cZ}{g} \cZ'} } \arrow[llddd,"{\widehat{\phi^{f,g}}}", Rightarrow ,shorten <=8.5ex, shorten >= 8.5ex, shift left=-6ex] \arrow[d,"{g\st w_f}"]	\arrow[ruu,"{}"] \& {} \& {} \\[-2ex]
	{} \& {} \& {{(\cQ\xp{\cY}{{f}} \cZ)\xp{\cZ}{g} \cZ'} } \arrow[rruuu,"{}", bend right=15]	\& {} \& {} \\[-5ex]
	{} \& {\cQ \xp{\cY}{{f\c g}} \cZ'} \arrow[ru,"{}",aeq]  \arrow[rrrd,"{}", bend left=25] \& {} 	\& {} \& {} \\[-5ex]
	{\cQ \xp{\cY}{\wt{f\c g}}\cZ'} \arrow[ru,"{}",aiso] \arrow[rrrr,"{}", bend left=15] \arrow[rrr,"{\widehat{\cQ}_g}"']\& {} \& {} 	\& {\cQ \xp{\cY}{f}\cZ} \arrow[r,"{\pi\st_{\cQ} f}"']  \& {\cQ} 
\end{cd}
and 
\begin{cd}
	{\cP \xp{\cY}{\wt{f\c g}} \cZ'}  \arrow[rrr,"{F((g,\id{}))}"] \arrow[rd,"{}",aiso]  \arrow[ddddd,"{w_{\wt{f\c g}}}"']\&[-5ex] {} \&[-5ex] {} \&[-3ex] {\cP \xp{\cY}{f}\cZ} \arrow[rddddd,"{f\st \pi_{\cP}}",""{name=P}] \arrow[r,"{w_f}"] \arrow[dddd,"{\lambda^{g}_{\cP \xp{\cY}{f}\cZ}}", Rightarrow ,shorten <=7.5ex, shorten >= 7.5ex] \& {\cQ \xp{\cY}{f}\cZ} \arrow[to=P,"{\gamma_{w_f}}", Rightarrow ,shorten <=3.5ex, shorten >= 3.5ex] \arrow[ddddd,"{{f\st \pi_{\cQ}}}"] \\[-5ex]
	{} \& {\cP \xp{\cY}{{f\c g}} \cZ'} \arrow[rd,"{}",aeq]  \& {} 	\& {} \& {} \\[-5ex]
	{} \& {} \& {{(\cP \xp{\cY}{{f}} \cZ)\xp{\cZ}{g} \cZ'} } \arrow[rdd,"{}", bend left=16] \arrow[ruu,"{}"] \arrow[llddd,"{\widehat{\phi^{f,g}}}", Rightarrow ,shorten <=8.5ex, shorten >= 8.5ex, shift left=-6ex] \arrow[d,"{g\st w_f}"] \& {} \& {} \\[-2ex]
	{} \& {} \& {{(\cQ\xp{\cY}{{f}} \cZ)\xp{\cZ}{g} \cZ'} }	 \arrow[rd,"{}"]\& {} \& {} \\[-5ex]
	{} \& {\cQ \xp{\cY}{{f\c g}} \cZ'}  \arrow[rr,"{}"]\arrow[ru,"{}",aeq]   \& {} 	\& {\cZ'}  \arrow[rd,"{g}"']\& {} \\[-5ex]
	{\cQ \xp{\cY}{\wt{f\c g}}\cZ'} \arrow[rrru,"{}", bend right=5] \arrow[ru,"{}",aiso] \arrow[rrr,"{\widehat{g}_{\cQ}}"']\& {} \& {} 	\& {\cQ \xp{\cY}{f}\cZ} \arrow[r,"{\pi\st_{\cQ} f}"']  \& {\cZ} 
\end{cd}
where the unnamed arrows are the projections and the 2-cell $\widehat{\phi^{f,g}}$ is defined by whiskering $\phi^{f,g}$ with the isomorphic 2-cells of pseudoinvertibility of the involved equivalences.

Given  a morphism $(\id{}, \Gamma)\: (\cZ, \cZ \ar{f} \cY) \to (\cZ, \cZ \ar{f'} \cY)$ in $\Groth{R}$, we define $\Xi_{(\id{}, \Gamma)}$ as the following 2-cell
\begin{cd}
	{\cP \xp{\cY}{f}\cZ} \arrow[dd,"{F((\id{},\Gamma))}"'] \arrow[r,"{w_{f}}"] \& {\cQ \xp{\cY}{f}\cZ} \arrow[dd,"{\qst{\cX}{\cG}(\Gamma)_{\cQ} }"] \arrow[rd,"{\pi\st_{\cQ}f}"] \arrow[ldd,"{\eta_{\Gamma}}",Rightarrow ,shorten <=5.5ex, shorten >= 5.5ex]\&[5ex] {} \\[-3ex]
	{} \& {} \& {\cQ} \\[-3ex]
	{\cP \xp{\cY}{f'}\cZ} \arrow[r,"{w_{f'}}"]\& {\cQ \xp{\cY}{f'}\cZ} \arrow[ru,"{\pi\st_{\cQ} f'}"'] \& {} 
\end{cd}
One can then construct the structure 2-cell of the form $\Xi_{(g,\alpha)}$ using the factorization of $(g,\alpha)$ as a composite of a morphism of type $(g,\id{})$ with one of type $(\id{},\alpha)$ and use the compatibility conditions satisfied by the descent datum to verify that these assignment defines a sigma-bicocone of shape $\Groth{R}$ over $\cQ$. This implies that there exists a unique morphism $w\: \cP \to \cQ$ together (invertible) 2-cells 
\begin{cd}
	{} \& {\cP}  \arrow[dd,"{\theta_f}",twoiso ,shorten <=2.5ex, shorten >= 2.5ex] \arrow[rd,"{w}", bend left=30] \& {} \\[-4ex]
	{\cP \xp{\cY}{f}\cZ}  \arrow[ru,"{\pi\st_{\cP} f}", bend left=30] \arrow[rd,"{w_f}"', bend right=30] \&  \& {\cQ} \\[-4ex]
	{} \& {\cQ \xp{\cY}{f}\cZ}  \arrow[ru,"{\pi\st_{\cQ} f}"', bend right=30]\& {}
\end{cd}
for every $\cZ \ar{f} \cY$ such that, given a morphism $(g,\alpha)\: (\cZ, \cZ \ar{f} \cY) \to (\cZ', \cZ' \ar{h} \cY)$, the following equality of 2-cells holds:
\begin{eqD}{cond}
	\scalebox{0.9}{
	\begin{cdN}
		{} \& {\cP}  \arrow[dd,"{\theta_f}",Rightarrow ,shorten <=1.5ex, shorten >= 1.5ex] \arrow[rd,"{w}", bend left=20] \& {} \\[-6ex]
		{\cP \xp{\cY}{f}\cZ} \arrow[dd,"{F((g,\alpha))}"']  \arrow[ru,"{\pi\st_{\cP} f}", bend left=20] \arrow[rd,"{w_f}"', bend right=20] \&  \& {\cQ} \\[-6ex]
		{} \& {\cQ \xp{\cY}{f}\cZ} \arrow[d,"{\Xi_{(g,\alpha)}}",Rightarrow ,shorten <=1.5ex, shorten >= 1.5ex]  \arrow[ru,"{\pi\st_{\cQ} f}"'{inner sep=0.1ex}, bend right=20]\& {}\\
		{\cP \xp{\cY}{h}\cZ'} \arrow[r,"{w_h}"']\& {\cQ \xp{\cY}{h}\cZ'} \arrow[ruu,"{\pi\st_{\cQ} h}"', bend right=30]\& {} \\
	\end{cdN}}
	\h[3]=\h[3]
		\scalebox{0.9}{
	\begin{cdN}
		{} \& {\cP} \arrow[ddd,"{\theta_h}",Rightarrow ,shorten <=5.5ex, shorten >= 5.5ex]  \arrow[rd,"{w}", bend left=20] \& {} \\[-6ex]
		{\cP \xp{\cY}{f}\cZ} \arrow[dd,"{F((g,\alpha))}"']  \arrow[ru,"{\pi\st_{\cP} f}", bend left=20] \&  \& {\cQ} \\[-6ex]
		{} \& {}  \& {}\\
		{\cP \xp{\cY}{h}\cZ'} \arrow[ruuu,"{\pi\st_{\cP} h}"', bend left=15] \arrow[r,"{w_h}"']\& {\cQ \xp{\cY}{h}\cZ'} \arrow[ruu,"{\pi\st_{\cQ} h}"', bend right=30]\& {} \\
	\end{cdN}}
\end{eqD}
We now need to prove that $w\: \cP \to \cQ$ is the desired morphism. It is straightforward to prove that $w$ is a morphism in $\qst{\cX}{\cG}(\cY)$. Moreover, given $\cZ \ar{f} \cY\in S$, we can induce the desired 2-cell $\psi^{f}\: w_f \Rightarrow f\st w$ applying the two-dimensional universal property of the iso-comma object $\cQ \xp{\cY}{f}\cZ$ to the compatible 2-cells
\begin{cd}
	{\cP \xp{\cY}{f} \cZ} \arrow[rd,"{\pi\st_{\cP}}"'] \arrow[rr,"{w_f}"] \arrow[dd,"{f\st w}"']\& {} \& {\cQ\xp{\cY}{f} \cZ} \arrow[ld,"{{{\theta}_f}^{-1}}", Rightarrow ,shorten <=1.5ex, shorten >= 1.5ex] \arrow[dd,"{\pi\st_{\cQ} f}"] \\[-4ex]
	{} \& {\cP} \arrow[rd,"{w}"'] \& {} \\[-4ex]
	{\cQ\xp{\cY}{f} \cZ} \arrow[rr,"{\pi\st_{\cQ} f}"'] \& {} \& {\cQ} 
\end{cd}
and 
\begin{cd}
	{\cP \xp{\cY}{f} \cZ}  \arrow[rrdd,"{f\st \pi_{\cP}}"',""{name=E}]\arrow[rr,"{w_f}"] \arrow[dd,"{f\st w}"']\& {} \& {\cQ\xp{\cY}{f} \cZ} \arrow[dd,"{f \st\pi_{\cQ}}"]  \arrow[to=E,"{\gamma_{w_f}}",Rightarrow ,shorten <=1.5ex, shorten >= 1.5ex]\\[-4ex]
	{} \& {} \& {} \\[-4ex]
	{\cQ\xp{\cY}{f} \cZ} \arrow[rr,"{f \st\pi_{\cQ}}"'] \& {} \& {\cZ} 
\end{cd}
Finally, one can prove that the required conditions are satisfied tanks to equalities of type (\ref{cond}) and the compatibility conditions satisfied by the descent datum. This concludes the proof that (M) holds.

It remains to prove that the condition (O) holds. This will be the trickiest condition to prove.

Consider a weak descent datum for S of elements of $\qst{\cX}{\cG}$ that assigns to every $\cZ \ar{f} \cY\in S$ a principal $\cG$-2-bundle 
$$\pi_f\: \cW_f \to \cZ\in \qst{\cX}{\cG}(\cZ)$$
 and to every 2-cell $\Gamma\: f \Rightarrow f'$ with $f,f'\: \cZ \to \cY$ a morphism 
 $$w_{\Gamma}\: \cW_f \Rightarrow \cW_{f'}$$
 in $\qst{\cX}{\cG}(\cZ)$. Moreover, for every chain $\cZ' \ar{g} \cZ \ar{f} \cY$ with $f\in S$, we have an equivalence 
 $$\phi^{f,g}\: \cW_{f\c g} \aequi g\st \cW_f$$
 in $\qst{\cX}{\cG}(\cZ)$ and all the other data required by the definition of weak descent datum (see \cite{2stacks}) satisfying the required axioms.
 
 We need to prove that there exist a principal $\cG$-2-bundle 
 $$\pi\: \cW \to \cY\in \qst{\cX}{\cG}(\cY)$$
 and for every $\cZ \ar{f} \cY\in S$ an equivalence 
 $$\psi^f\: W_f \to f\st W$$
 in $\qst{\cX}{\cG}(\cY)$ and all the other data required by the definition of weak descent datum satisfying the required conditions.
 
 We use the assignment of the weak descent datum to construct a pseudofunctor $$\Lambda \: \Groth{R} \to \psslice{\K}{\cY}$$
 Given an object $(\cZ,\cZ \ar{f}\cY)\in \Groth{R}$, we define $\Lambda((\cZ,\cZ \ar{f}\cY))$ as the morphism 
 $$\cW_f \ar{\pi_f} \cZ \ar{f} \cY$$
 Given a morphism $(g,\id{})\: (\cZ', \wt{f\c g}) \to (\cZ, f)$ in $\Groth{R}$, we define $\Lambda((g,\id{}))$  as the 2-cell
  \begin{cd}
  	{\cW_{\wt{f\c g}}} \arrow[rr,"{\phi^{f,g}}"] \arrow[rd,"{\pi_{\wt{f\c g}}}"',""{name=A}] \&[-3ex] {} \&[-3ex] {g\st \cW_f} \arrow[to=A,"{\gamma_{\phi^{f,g}}}",Rightarrow ,shorten <=2.5ex, shorten >= 2.5ex, shift left=-1ex] \arrow[ld,"{g\st \pi_f}"] \arrow[rr,"{\pi\st_f g}"]\&[-3ex] {} \&[-3ex] {\cW_f} \arrow[ld,"{\pi_f}"]  \arrow[llld,"{\lambda^{\pi_f,g}}",Rightarrow ,shorten <=9.5ex, shorten >= 9.5ex]\\
  	{} \& {\cZ'} \arrow[rr,"{g}"] \arrow[rd,"{\wt{f\c g}}"' ,""{name=F}] \& {} \& {\cZ} \arrow[to=F,"{\sigma^{-1}_{f,g}}"{inner sep=0.01ex}, Rightarrow ,shorten <=3.5ex, shorten >= 3.5ex] \arrow[ld,"{f}"]\& {}\\ 
  	{} \& {} \& {\cY} \& {} \& {}
  \end{cd}
Given a morphism $(\id{}, \Gamma)\: (\cZ, \cZ \ar{f} \cY) \to (\cZ, \cZ \ar{f'} \cY)$ in $\Groth{R}$, we define $\Lambda((\id{},\Gamma))$ as the 2-cell 
\begin{cd}
	{\cW_f} \arrow[rrrr,"{\cW_{\Gamma}}"]  \arrow[rd,"{\pi_{f}}"']\&[-3ex] {} \&[-3ex] {} \&[-3ex] {} \&[-3ex] {\cW_{f'}} \arrow[ld,"{\pi_{f'}}"] \arrow[lllld,"{\gamma_{\cW_{\Gamma}}}",Rightarrow ,shorten <=11.5ex, shorten >= 11.5ex]\\
	{} \& {\cZ} \arrow[rd,"{f}"',""{name=G}] \arrow[rr,"{}",equal] \&{} \&  {\cZ} \arrow[ld,"{f'}",""{name=H}] \arrow[ll,"{\Gamma^{-1}}",Rightarrow ,shorten <=3.5ex, shorten >= 3.5ex, shift left=4ex]\& {}\\
	{} \& {} \& {\cY} \& {} \& {}
\end{cd}
One can then construct the image of a generic morphism of $\Groth{R}$ using the factorization in terms of morphisms of type $(g,\id{})$ and $(\id{},\Gamma)$.
Finally, given a 2-cell $\delta\: (g,\alpha) \to (h,\beta)$ in $\Groth{R}$ with $(g,\alpha),(h,\beta)\: (\cZ, \cZ \ar{f} \cY) \to (\cZ',\cZ' \ar{f'} \cY)$, we define $\Lambda(\delta)$ as the 2-cell
\begin{cd}
	{} \& {\cW_{\wt{f'\c g}}} \arrow[dd,"{\cW_{R(\delta)_{f'}}}"']  \arrow[r,"{\phi^{f',g}}"] \& {g\st \cW_{f'}}  \arrow[rd,"{\pi\st_{f'} g}"] \arrow[dd,"{\qst{\cX}{\cG}(\delta)_{\cW_{f'}}}"] \arrow[ldd,"{(\alpha_{\delta})_{f'}}", Rightarrow ,shorten <=4.5ex, shorten >= 4.5ex]\& {} \\[-3ex]
	{\cW_f}  \arrow[ru,"{\cW_{\alpha}}"] \arrow[rd,"{\cW_{\beta}}"']\& {} \& {} \& {\cW_{f'}} \\[-3ex]
	{} \& {\cW_{\wt{f'\c h}}} \arrow[r,"{\phi^{f',g}}"'] \& {h\st \cW_{f'}} \arrow[ru,"{\pi\st_{f'} h}"'] \& {} 
\end{cd}
where $(\alpha_{\delta})_{f'}$ is the 2-cell assigned by the weak descent datum (see Definition 3.14 of \cite{2stacks}).

We now prove that $\Lambda$ is a pseudofunctor.

\noindent Given a chain of morphisms
$$(\cZ,\cZ \ar{f} \cY) \ar{(g,\alpha)} (\cZ',\cZ' \ar{f'} \cY) \ar{(g',\alpha')} (\cZ'',\cZ'' \ar{f''} \cY)$$
in $\Groth{R}$, the isomorphic 2-cell 
$$\varepsilon_{(g,\alpha), (g', \alpha')}\: \Lambda((g',\alpha'))\c \Lambda((g,\alpha)) \Rightarrow \Lambda((g',\alpha')\c (g,\alpha))$$ is given by the following pasting diagram:
\begin{cd}
	{} \&[-4ex] {\cW_{\wt{f'\c g}}} \arrow[rd,"{\cW_{\alpha' \star g}}"{inner sep=0.1ex},""'{name=D}] \arrow[d,"{}",aiso] \arrow[r,"{\phi^{f',g}}"] \&[-4ex] {g\st \cW_{f'}} \arrow[d,"{}",twoiso ,shorten <=1.5ex, shorten >= 1.5ex] \arrow[rd,"{g\st \cW_{\alpha}}"] \arrow[r,"{\pi\st_{f'}g}"] \&[-4ex] {\cW_{f'}} \arrow[r,"{\cW_{\alpha}}"] \&[-4ex] {\cW_{\wt{f'' \c g'}}} \arrow[r,"{\phi^{f'',g}}"] \&[-4ex] {(g')\st \cW_{f''}} \arrow[rd,"{\pi\st_{f''}g}"]\&[-4ex] {} \\
	{\cW_{f}} \arrow[ru,"{\cW_{\alpha}}"] \arrow[rd,"{\cW_{\zeta}}"'] \& {\cW_{f'\c g}} \arrow[from=D,"{}",twoiso] \arrow[r,"{}",aiso]\& {\cW_{\wt{\wt{(f'' \c g')}\c g}}} \arrow[ld,"{}",aiso] \arrow[r,"{\phi^{f''\c g',g}}"]\& {g\st \cW_{\wt{f''\c g'}}} \arrow[r,"{g\st \phi^{f'',g'}}"] \arrow[ru,"{}"] \& {g\st (g')\st \cW_{f''}} \arrow[ld,"{}",aeq]  \arrow[ru,"{}"] \arrow[llld,"{\beta^{g',g}_{f''}}",Rightarrow ,shorten <=11.5ex, shorten >= 11.5ex]\& {} \& {\cW_{f''}} \\
	{} \& {\cW_{\wt{f'' \c (g' \c g)}}} \arrow[rr,"{\phi^{f'',g'\c g}}"'] \& {} \& {(g' \c g)\st \cW_{f''}} \arrow[rrru,"{\pi\st_{f''}(g'\c g)}"', bend right=20] \& {} \& {} \& {} 
\end{cd}
where $\beta^{g',g}_{f''}$ is the weak cocycle condition on $f'',g',g$ and $\zeta$ is the 2-cell such that $(g,\alpha) \c (g',\alpha) = (g\c g', \zeta)$.
 Moreover, it is straightforward to prove that $\Lambda((\id{},\id{}))$ is isomorphic to the identity and that the required conditions on 2-cells are satisfied.
 
 Let then $W$ be the sigma-bicolimit of the pseudofunctor $\opn{dom} \c \Lambda$ (that exists because $\K$ has all sigma-bicolimits by hypothesis). Since $\Lambda$ is pseudofunctorial it corresponds to a sigma-bicocone over $\cY$ of shape $\Groth{R}$ and so, using the universal property of the sigma-bicolimit, we can induce a morphism 
 $$\pi\: \cW \to \cY$$
 Moreover, by the calculus of colimits in lax slices developed by Mesiti in \cite{Mesiticolimits} (and more in detail in \cite{Mesitithesis} in the sigma-bicolimit case), we obtain the following equality
 $$\pi= \sigmabicolim{\Lambda}$$
 
 We now prove that the morphism $\pi$ is a principal $\cG$-2-bundle. $\pi$ is trivially $\cG$-equivariant (since we are considering trivial actions of $\cG$) so it remains to prove that it is 2-locally trivial. 
 Given $\cZ \ar{f} \cY\in S$, since $\pi_f\: \cW_f\to \cZ$ is 2-locally trivial, there exists a covering bisieve $S_f\in \tau(\cZ)$ such that for every $\cT \ar{h} \cZ\in S_f$ the iso-comma object of $\pi_f$ and $h$ is equivalent to the product $\cG \x \cT$. Consider then the collection 
 $$S'=\{ \cT \ar{h} \cZ \ar{f} \cY | f\in S, h\in S_f\}$$
 Using the axiom $(T2)$ of bitopology, it is easy to prove that $S'$ is a covering bisieve over $\cY$.  Given a morphism $\cT\ar{l} \cY \in S'$ that is equal to the composite $\cT \ar{r} \cZ' \ar{g} \cY$, we obtain the following chain of equalities
 $$l\st \pi= l\st \sigmabicolim{\Lambda}= \sigmabicolim{l\st \Lambda},$$
 where the second equality is given by the hypothesis that iso-comma objects preserve colimits.
 And given an object $(\cZ,f)\in \Groth{R}$ the following is an iso-comma object square
 \begin{cd}
 	{\cG \x \cT} \arrow[r,"{}",iso,shift left=-5ex]\arrow[r,"{q_1}"] \arrow[d,"{q_2}"']\& {\cW_f \xp{\cZ}{}(\cZ \x[\cY] \cZ')} \arrow[r,"{}",iso,shift left=-5ex] \arrow[r,"{}"] \arrow[d,"{(f\st g)\st \pi_f}"]\& {\cW_f} \arrow[d,"{\pi_f}"]\\
 	{(\cZ \x[\cY] \cZ')\x[\cZ']\cT} \arrow[r,"{}",iso,shift left=-5ex] \arrow[r,"{(f\st g)\st r}"] \arrow[d,"{}"]\& {\cZ \x[\cY] \cZ'} \arrow[r,"{}",iso,shift left=-5ex] \arrow[d,"{g\st f}"]  \arrow[r,"{f\st g}"]\& {\cZ} \arrow[d,"{f}"]\\
 	{\cT} \arrow[r,"{r}"'] \& {\cZ'} \arrow[r,"{g}"'] \& {\cY}
 \end{cd}
thanks to the fact that the covering bisieve generated by $(f\st g)\st h$ with $h\in S_g$ trivializes $(f\st g)\st \pi_f$ as shown in the proof of Proposition \ref{comma2bun}. This implies that the iso-comma of the morphism $\cW_f \ar{\pi_f} \cZ \ar{f} \cY$ along $l$ is equivalent to the product $\cG \x \cT$. And so the morphism $\pi$ is 2-locally trivial. 

Furthermore, we can induce a morphism $\alpha\: \cW \to \cX$ applying the universal property of the sigma-bicolimit to the sigma-bicocone of shape $\Groth{R}$ whose structure 2-cells are simply given by the fact that the morphisms involved are morphisms in 2-categories of the form $\qst{\cX}{\cG}(\cT)$ with $\cT\in\K$. This conclude the proof of the fact that $(\cW \ar{\pi} \cY, \cW \ar{\alpha} \cX)$ is in $\qst{\cX}{\cG}(\cY)$.  

We now need to induce the equivalence $\psi^{f}\: \cW_{f} \aequi f\st \cW$ for every $f\in S$. We equivalently induce the pseudoinverse $\theta^{f}\: f\st \cW \to \cW_f$.
We observe that the following chain of equalities holds
$$f\st \cW = f\st \sigmabicolim{\Lambda}= \sigmabicolim{f\st \Lambda}$$
and so we can use the universal property of the sigma-bicolimit to induce $\theta^{f}$. To do so, we construct a sigma bicocone $\Sigma$ over $\cW_f$. Given $(\cZ', \cZ' \ar{g} \cY)\in \Groth{R}$, we define $\Sigma_g$ as the composite
$$f\st \cW_g \ar{\lambda_{f,g}} (g\st f)\st \cW_{g} \ar{\rho^{g,g\st f}} \cW_{g\c g\st f} \ar{\cW_{\sigma_{f,g}}} \cW_{f\c f\st g} \ar{\phi^{f,f\st g}} (f\st g)\st \cW_{f} \ar{\pi\st_{f} (f\st g)} \cW_f,$$
where $\lambda_{f,g}$ is the isomorphism induced by the universal property of $(g\st f)\st \cW_g$ using the 2-cell of iso-comma object of $f$ and $g\c \pi_g$ and $\rho^{g,g\st f}$ is the pseudoinverse of $\psi^{g,g\st f}$.

Given a morphism $(\id{},\alpha)\: (\cZ',g) \to (\cZ', g')$ in $\Groth{R}$, the structure 2-cell $\Sigma_{(\id{},\alpha)}$ is given by the pasting diagram
\begin{eqD*}
	\scalebox{0.8}{
		\begin{cdN}
			{f\st \cW_g} \arrow[dd,"{f\st \cW_{\alpha}}"'] \arrow[r,"{\lambda_{f,g}}"] \& {(g\st f)\st \cW_{g}} \arrow[dd,"{\delta_{\alpha}}"] \arrow[r,"{\rho^{g,g\st f}}"] \& {\cW_{g\c g\st f}} \arrow[r,"{\cW_{\sigma_{f,g}}}"]  \& {\cW_{f\c f\st g}} \arrow[r,"{\phi^{f,f\st g}}"]  \& {(f\st g)\st \cW_{f}} \arrow[rd,"{\pi\st_{f} (f\st g)}"] \arrow[dd,"{\pi\st_{f}(f\st \alpha)}"] \arrow[llldd,"{\zeta}",twoiso ,shorten <=20.5ex, shorten >= 20.5ex]\& {} \\
			{} \& {} \& {} \& {} \& {} \& {\cW_f} \\
			{f\st \cW_{g'}} \arrow[r,"{\lambda_{f,g'}}"'] \& {((g')\st f)\st \cW_{g'}} \arrow[r,"{\rho^{g',(g')\st f}}"'] \& {\cW_{g'\c (g')\st f}} \arrow[r,"{\cW_{\sigma_{f,g'}}}"']  \& {\cW_{f\c f\st g'}} \arrow[r,"{\phi^{f,f\st g'}}"']  \& {(f\st g')\st \cW_{f}} \arrow[ru,"{\pi\st_{f} (f\st g')}"']\& {} \\
	\end{cdN}}
\end{eqD*}
where the morphism $\delta_{\alpha}$ is induced by the universal property of the iso-comma object $((g')\st f)\st W_{g'}$ using the 2-cell 
\begin{cd}
	{(g\st f)\st \cW_{g}} \arrow[r,"{}"] \arrow[d,"{}"]\& {\cW_{g}} \arrow[ld,"{\lambda^{\pi_g,g\st f}}"',Rightarrow,shorten <=4.5ex, shorten >= 4.5ex, shift left=-1ex] \arrow[r,"{\cW_{\alpha}}"] \arrow[rd,"{\pi_g}"'{pos=0.4},""{name=D}]\& {\cW_{g'}} \arrow[d,"\pi_{g'}"]  \arrow[from=D,"{}",iso] \\
	{\cZ \xp{\cY}{g}\cZ'} \arrow[r,"{f\st \alpha}"'] \arrow[rr,"{g\st f}",bend left=24,""{name=H}] \& {\cZ \xp{\cY}{g'}\cZ'} \arrow[r,"{}"]  \& {\cZ'} 
\end{cd}
and the 2-cell $\zeta$ is given by the following pasting diagram
\begin{eqD*}
	\scalebox{0.75}{
		\begin{cdN}
			{(g\st f)\st \cW_g} \arrow[rd,"{}"] \arrow[dddd,"{\delta_{\alpha}}"'] \arrow[rr,"{\rho^{g,g\st f}}"] \&[-7ex] {} \&[-2ex] {\cW_{\wt{g\c g\st f}}}  \arrow[d,"{}"] \arrow[r,"{\cW_{\sigma_{f,g}}}"] \&[-0.5ex] {\cW_{f\c f\st g}} \arrow[d,"{}",aiso] \arrow[rr,"{\phi^{f,f\st g}}"] \&[-4ex] {} \&[-4ex] {(f\st g)\st \cW_f} \arrow[dddd,"{\pi\st _f(f\st \alpha)}"]\\[4ex]
			{} \& {((g')\st f)\st \cW_{g}} \arrow[ru,"{}",iso] \arrow[r,"{\rho^{g',(g')\st f \c f\st \alpha}}"] \arrow[dd,"{}",aiso]\& {\cW_{\wt{g'\c (g\st f \c f\st \alpha)}}} \arrow[ru,"{}",iso]\arrow[r,"{}"] \arrow[d,"{}",aiso] \& {\cW_{\wt{\wt{f\c f\st g'} A\c f\st \alpha}}} \arrow[dd,"{\phi^{f\c f\st g,f\st \alpha}}"] \arrow[rru,"{\beta^{f\st g',f\st \alpha}_f}",iso]\& {(f\st \alpha)\st (f\st g')\st \cW_f)} \arrow[ru,"{}",aeq] \arrow[rddd,"{}"] \& {} \\[-2ex]
			{} \& {} \& {\cW_{\wt{\wt{(g'\c (g')\st f)}\c f\st \alpha}}} \arrow[d,"{\phi^{g'\c (g')\st f,f\st \alpha}}",bend left=20,""{name=A}] \& {} \& {} \& {}\\[3ex]
			{} \& {(f\st \alpha)\st ((g')\st f)\cW_g} \arrow[ruu,"{\beta^{(g')\st f,f\st \alpha}_{g'}}"{inner sep=0.3ex},iso] \arrow[r,"{(f\st \alpha)\st \rho^{g',(g')\st f}}"'{inner sep=1ex}] \arrow[ld,"{}"]\& {(f\st \alpha)\st \cW_{\wt{g \c (g')\st f}}} \arrow[r,"{(f\st \alpha)\st \cW_{\sigma_{f,g'}}}"'{inner sep=1ex}] \arrow[d,"{}"] \arrow[u,"{\rho^{g'\c (g')\st f,f\st \alpha}}",bend left=20,""{name=B}] \arrow[from=B,to=A,"{}",iso] \arrow[ruu,"{}",iso] \& {(f\st\alpha)\st \cW_{\wt{f\c (f')\st g}}} \arrow[d,"{}"] \arrow[ruu,"{(f\st \alpha)\st \phi^{f,f\st g'}}"'] \& {} \& {} \\[4ex]
			{((g')\st f)\st \cW_{g'}} \arrow[ruuuu,"{}",iso] \arrow[rr,"{\rho^{g',{g'}\st f}}"] \&[-1ex] {} \&[-1ex] {\cW_{\wt{(g')\c (g')\st f}}} \arrow[r,"{\cW_{\sigma_{f,g'}}}"] \&[-0.5ex] {\cW_{f\c f\st g'}} \arrow[rr,"{\phi^{f,f\st g'}}"] \&[-3ex] {} \&[-3ex] {(f\st g')\st \cW_f}
	\end{cdN}}
\end{eqD*}

Given a morphism $(h, \id{})\: (\cZ'',\wt{g\c h}) \to (\cZ', g)$ in $\Groth{R}$, the structure 2-cell $\Sigma_{(h,\id{})}$ is given by the pasting diagram
\begin{eqD*}
	\scalebox{0.8}{
		\begin{cdN}
			{f\st \cW_{\wt{g\c h}}} \arrow[d,"{f\st \phi^{g,h}}"'] \arrow[r,"{\lambda_{f,{\wt{g\c h}}}}"] \&[-2ex] {(({\wt{g\c h}})\st f)\st \cW_{{\wt{g\c h}}}} \arrow[dd,"{\iota_{g,h}}"] \arrow[r,"{\rho^{{\wt{g\c h}},({\wt{g\c h}})\st f}}"] \&[-2ex] {\cW_{({\wt{g\c h}})\c(( {\wt{g\c h}})\st f)}} \arrow[r,"{\cW_{\sigma_{f,{\wt{g\c h}}}}}"]  \&[-2ex] {\cW_{f\c f\st( {\wt{g\c h}})}} \arrow[r,"{\phi^{f,f\st ({\wt{g\c h}})}}"]  \&[-2ex] {(f\st ({\wt{g\c h}}))\st \cW_{f}} \arrow[llldd,"{\xi}",twoiso ,shorten <=22.5ex, shorten >= 22.5ex] \arrow[rd,"{\pi\st_{f}(f\st(\wt{g\c h}))}"] \arrow[dd,"{l_{g,h}}"]\&[-2ex] {} \\
			{f\st h\st \cW_g} \arrow[d,"{f\st \pi\st _g h}"'] \& {} \& {} \& {} \& {} \& {\cW_f}\\
			{f\st \cW_g} \arrow[r,"{\lambda_{f,g}}"'] \& {(g\st f)\st \cW_{g}} \arrow[r,"{\rho^{g,g\st f}}"'] \& {\cW_{g\c g\st f}} \arrow[r,"{\cW_{\sigma_{f,g}}}"']  \& {\cW_{f\c f\st g}} \arrow[r,"{\phi^{f,f\st g}}"']  \& {(f\st g)\st \cW_{f}} \arrow[ru,"{\pi\st_{f} (f\st g)}"']\& {} 
	\end{cdN}}
\end{eqD*}
where the morphism $\iota_{g,h}$ is induced by the universal property of the iso-comma object $(g\st f)\st W_g$ using the 2-cell 
\begin{cd}
	{((\wt{g\c h})\st f)\st \cW_{\wt{g\c h}}} \arrow[r,"{}"]  \arrow[dd,"{}"]\& {\cW_{\wt{g\c h}}} \arrow[ldd,"{\lambda^{g\c \pi_g,f}}",Rightarrow ,shorten <=9.5ex, shorten >= 9.5ex]\arrow[rd,"{}",iso,xshift=-5ex, shift left=2ex] \arrow[r,"{\phi^{g,h}}"] \arrow[d,"{\pi_{\wt{g\c h}}}"'] \& {h\st \cW_g} \arrow[ld,"{h\st \pi_g}"{inner sep=0.1ex},""{name=N}] \arrow[d,"{\pi_g}"]\\
	{} \& {\cZ''} \arrow[r,"{h}"] \arrow[rd,"{\wt{g\c h}}"' ,""{name=G}] \& {\cZ'} \arrow[to=G,"{\sigma^{-1}_{g,h}}",Rightarrow] \arrow[d,"{g}"] \\
	{\cZ} \arrow[rr,"{f}"'] \& {} \& {\cY} 
\end{cd}
the morphism $l_{g,h}$ is induced by the universal property of the iso-comma object $(f\st g)\st W_f$ using the 2-cell
\begin{cd}
	{(f\st (\wt{g\c h}))\st W_f}  \arrow[r,"{}"] \arrow[d,"{}"]\& {\cW_f} \arrow[dd,"{\pi_f}"]  \arrow[ld,"{\lambda^{\pi_f,f\st (\wt{g\c h})}}"{inner sep=0.1ex},Rightarrow,shorten <=5.5ex, shorten >= 5.5ex]\\
	{f\st \cZ''}  \arrow[rd,"{f\st (\wt{g\c h})}",""{name=D}] \arrow[d,"{f\st h}"']\& {}\\
	{f\st \cZ'} \arrow[from=D,"{f\st \sigma_{g,h}}",Rightarrow ,shorten <=1.5ex, shorten >= 1.5ex]\arrow[r,"{f\st g}"'] \& {\cZ} 
\end{cd}
the commutativity of the diagram on the left can be checked on components and the 2-cell $\xi$ is obtained as a pasting of appropriate weak cocycle condition 2-cells and cells given by the assignment of the descent datum in a way similar to the one used to construct $\zeta$ in $\Sigma_{(\id{},\alpha)}$.
One can then prove that these data give a sigma-bicocone $\Sigma$. And we can thus induce the desired morphism $\theta^f$. Furthermore, since the morphism 
$$f\st \cW_g \ar{\lambda_{f,g}} (g\st f)\st \cW_{g} \ar{\rho^{g,g\st f}} \cW_{g\c g\st f} \ar{\cW_{\sigma_{f,g}}} \cW_{f\c f\st g} \ar{\phi^{f,f\st g}} (f\st g)\st \cW_{f}$$
is an equivalence, it follows that $\theta^f$ is an equivalence as well. And so we have induced the desired equivalence $\psi^f\: \cW_f \aequi f\st \cW$ (pseudoinverse of $\theta^{f}$).

We now need to prove that, given $\cZ \ar{f} \cY\in S$ and a morphism $\cZ' \ar{g} \cZ$, there exists an invertible 2-cell 
	\begin{cd}
	{\cW_{\wt{f\c g}}} \arrow[rr,"{\phi^{f,g}}"] \arrow[d,"{\psi^{\wt{f\c g}}}"']\& {} \& {g\st \cW_f} \arrow[d,"{g\st \psi^f}"] \arrow[lld,"{(\varepsilon_g)_f}",Rightarrow ,shorten <=10.5ex, shorten >= 12.5ex]\\
	{(\wt{f\c g})\st \cW} \arrow[r,"{}",aiso]\& {(f\c g)\st \cW}\arrow[r,"{}",aeq] \& {g\st f\st \cW} 
\end{cd}
We equivalently prove that there exists an invertible 2-cell
\begin{cd}
	{g\st f\st \cW} \arrow[r,"{g\st \theta^f}"] \arrow[d,"{}",aeq]\& {g\st \cW_f} \arrow[r,"{\rho^{f,g}}"]  \& {\cW_{\wt{f\c g}}} \arrow[dd,"{}",equal] \arrow[lldd,"{(\iota_g)_f}",Rightarrow ,shorten <=8.5ex, shorten >= 8.5ex] \\[-4ex]
	{(f\c g)\st \cW} \arrow[d,"{}",aiso] \& {} \& {}\\[-4ex]
	{(\wt{f\c g})\st \cW} \arrow[rr,"{\theta^{\wt{f\c g}}}"'] \& {} \& {\cW_{\wt{f\c g}}} 
\end{cd}
We notice that 
$$g\st f\st \cW =g\st f\st \sigmabicolim{(\opn{dom}\c\Lambda)}=\sigmabicolim{(g\st \c f\st \c\opn{dom}\c\Lambda)}$$
and so we can induce the 2-cell $(\iota_g)_f$ via the universal property of the sigma-bicolimit. Given $(\cT, \cT \ar{t} \cY)\in \Groth{R}$, we define a 2-cell $\Gamma_t$ as the following pasting diagram
\begin{eqD*}
	\scalebox{0.75}{
		\begin{cdN}
			{g\st f\st \cW_t} \arrow[rdddd,"{}",aeq]\arrow[rrr,"{g\st f\st \sigma_t}"] \arrow[ddd,"{g\st f\st \sigma_t}"] \arrow[rrd,"{g\st \lambda_{f,t}}"] \&[-3ex] {}\&[-3ex] {} \&[-3ex] {g\st f\st \cW} \arrow[rrr,"{g\st \theta^f}"]\&[-2ex] {} \&[-2ex] {} \&[-2ex] {g\st \cW_f} \arrow[ddddddddd,"{\rho^{f,g}}"] \\
			{} \& {} \& {g\st (t\st f)\st \cW_t} \arrow[r,"{g\st \rho^{t,t\st f}}"] \arrow[d,"{}",aiso] \& {g\st \cW_{\wt{t\c t\st f}}} \arrow[r,"{}",iso, shift left =6ex] \arrow[r,"{g\st \cW_{\sigma_{f,t}}}"] \arrow[d,"{}",aiso]\& {g\st \cW_{\wt{f\c f\st t}}} \arrow[r,"{g\st \phi^{f,f\st t}}"] \arrow[d,"{}",aiso] \& {g\st (f\st t)\st \cW_f} \arrow[ru,"{g\st (\pi\st _f (f\st t))}"] \arrow[d,"{}",aiso] \& {} \\
			{} \& {} \& {m\st (t\st f)\st \cW_t} \arrow[r,"{\beta^{t\st f,m}_t}",iso, shift left=-12ex] \arrow[r,"{m\st \rho ^{t,t\st f}}"] \arrow[dd,"{}",aiso] \& {m\st \cW_{\wt{t\c t\st f}}} \arrow[r,"{}"], \arrow[r,"{}",iso, shift left=-4ex] \arrow[d,"{\rho^{\wt{t\c t\st f},m}}"]\& {m\st \cW_{f\c f\st t}} \arrow[r,"{\beta^{f\st t,m}_f}",iso, shift left=-12ex] \arrow[r,"{m\st \phi^{f,f\st t}}"] \arrow[d,"{\rho^{\wt{f\c f\st t},m}}"'{}, bend right=15,""{name=A}] \& {m\st (f\st t)\st \cW_f} \arrow[dd,"{}",aiso] \& {} \\
			{g\st f\st \cW} \arrow[d,"{}",aiso] \& {} \& {} \& {\cW_{\wt{\wt{t\c t\st f}\c m}}} \arrow[r,"{}",iso, shift left=-6ex] \arrow[r,"{}"] \arrow[d,"{}",aiso] \& {\cW_{\wt{{f\c f\st t}\c m}}} \arrow[d,"{}",aiso] \arrow[u,"{\phi^{\wt{f\c f\st t},m}}"'{}, bend right=15,""{name=B}] \arrow[from= B,to =A,"{}",iso]\& {} \& {} \\
			{(f\c g) \st \cW} \arrow[ddddd,"{}",aiso]  \&{(f\c g)\st \cW_t}  \arrow[l,"{(f\c g) \st \sigma_t}"] \arrow[dddd,"{}",aiso] \arrow[r,"{\lambda_{f\c g,t}}"] \& {(t\st (f\c g))\st \cW_t} \arrow[r,"{}",iso, shift left=-22ex] \arrow[dddd,"{}"] \arrow[r,"{\rho^{t,t\st f \c m}}"]  \& {\cW_{\wt{t\c t\st (\wt{f\c g})}}} \arrow[r,"{}",iso, shift left=-22ex]\arrow[dddd,"{}"] \arrow[r,"{}"]\& {\cW_{\wt{f\c f\st t \c m}}} \arrow[r,"{}",iso, shift left=-6ex] \arrow[r,"{\phi^{f,f\st t \c m}}"] \arrow[d,"{}"] \& {(f\st t \c m)\st \cW_f} \arrow[d,"{}"]\& {} \\
			{} \& {} \& {} \& {} \& {\cW_{\wt{f\c g \c (f\c g) \st t}}} \arrow[r,"{\beta^{g, (f\c g) \st t}_{f}}",iso, shift left=-10ex] \arrow[r,"{\phi^{f,g\c (f\c g) \st t}}"] \arrow[dd,"{}",aiso] \& {(g\c (f\c g)\st t) \st \cW_f} \arrow[d,"{}",aiso] \& {} \\
			{} \& {} \& {} \& {} \& {} \& {((f\c g) \st t)\st (g\st \cW_f)}  \arrow[d,"{((f\c g) \st t)\st\rho^{f,g}}"{pos=0.4}, bend left=20 ,""{name=F}] \& {} \\
			{} \& {} \& {} \& {} \& {\cW_{\wt{\wt{f\c g}\c (f\c g) \st t}}} \arrow[r,"{}",iso, shift left=-6ex] \arrow[r,"{\phi^{\wt{f\c g},(f\c g) \st t}}"] \arrow[d,"{}"] \& {((f\c g) \st t)\st \cW_{\wt{f\c g}}} \arrow[d,"{}"] \arrow[u,"{((f\c g) \st t)\st\phi^{f,g}}"{pos=0.6}, bend left=20 ,""{name=G}] \arrow[from= G,to =F,"{}",iso]\& {} \\
			{} \& {(\wt{f\c g})\st \cW_t} \arrow[r,"{\lambda_{\wt{f\c g},t}}"] \arrow[ld,"{(\wt{f\c g})\st \sigma_t}"] \& {(t\st (\wt{f\c g}))\st \cW_t} \arrow[r,"{\rho^{t,t\st(\wt{f\c g})}}"]\& {\cW_{\wt{t\c t\st (\wt{f\c g})}}} \arrow[r,"{}",iso, shift left=-6ex] \arrow[r,"{\cW_{\wt{f\c g},t}}"]\& {\cW_{\wt{(\wt{f\c g})\c (\wt{f\c g})\st t}}} \arrow[r,"{\phi^{\wt{f\c g},(\wt{f\c g})\st t}}"] \& {((\wt{f\c g})\st t)\st \cW_{f\c g}} \arrow[rd,"{}"] \& {} \\
			{(\wt{f\c g})\st \cW} \arrow[rrrrrr,"{\theta^{\wt{f\c g}}}"'] \& {} \& {} \& {} \& {} \& {} \& {\cW_{\wt{f\c g}}} 
	\end{cdN}}
\end{eqD*}
where $\varepsilon$ is the canonical equivalence between $(f\c g)\st \cT$ and $g\st f\st \cT$ and $m=(f\st t)\st g \c \varepsilon$. Using the compatibility conditions of the weak descent datum, one can then prove that the $\Gamma_t$'s form a modification and so the 2-cell $(\iota_g)_f$ is induced by the universal property of the sigma-bicolimit.

We now need to prove that, given a 2-cell $\gamma\: f\Rightarrow f'$ with $f,f'\: \cZ\to \cY$ in $R(\cZ)$  there exists an isomorphic 2-cell
\begin{eqD*}
	\begin{cdN}
		{\cW_f} \arrow[r,"{\psi^f}"] \arrow[d,"{\eta_{\gamma}}"']\& {f\st \cW} \arrow[d,"{{\qst{\cX}{\cG}(\gamma) _\cW}}"] \arrow[ld,"{\psi_{\gamma}}",twoiso, shorten <= 2ex, shorten >= 2ex]\\
		{\cW_{f'}} \arrow[r,"{\psi^{f'}}"'] \& {{f'}\st \cW}
	\end{cdN}
\end{eqD*}

We equivalently induce an isomorphic 2-cell
\begin{cd}
	{f\st \cW} \arrow[r,"{\theta^f}"] \arrow[d,"{\qst{\cX}{\cG}(\gamma)_{\cW}}"']\& {\cW_f} \arrow[d,"{\eta_\gamma}"] \arrow[ld,"{\kappa_\gamma}",twoiso ,shorten <=2.5ex, shorten >= 2.5ex] \\
	{{f'}\st \cW}  \arrow[r,"{\theta^{f'}}"']\& {\cW_{f'}}
\end{cd}
We notice that 
$$f\st \cW=f\st \sigmabicolim{\opn{dom}\c \Lambda}= \sigmabicolim{f\st \c \opn{dom}\c \Lambda}$$
and so we can induce the desired 2-cell $\kappa_{\gamma}$ via the universal property of the sigma-bicolimit using a modification $\Delta$. Given $(\cT, \cT \ar{t} \cY)\in \Groth{R}$, we define the 2-cell 
\begin{cd}
	{f\st \cW_t}  \arrow[r,"{f\st \sigma_t}"] \arrow[d,"{f\st \sigma_t}"']\& {f\st \cW} \arrow[r,"{\theta^f}"] \& {\cW_f} \arrow[d,"{\eta_{\gamma}}"] \arrow[lld,"{\Delta_t}",twoiso ,shorten <=5.5ex, shorten >= 5.5ex]  \\
	{f\st \cW} \arrow[r,"{\qst{\cX}{\cG}(\gamma)_{\cW}}"'] \& {{f'}\st \cW} \arrow[r,"{\theta^{f'}}"']\& {\cW_{f'}} 
\end{cd}
as a pasting diagram analogous to the one used to define $\Gamma_t$. And it is easy to see that we obtain a modification $\Delta$ that induces the desired 2-cell $\kappa_{\gamma}$.

Finally, it is straightforward to verify that the defined data satisfy the axioms required by the definition of weakly effective weak descent datum. This concludes the proof of the condition (O).
\end{proof}

\bibliographystyle{abbrv} 
\bibliography{Bibliography_quotient2stacks.bib}
\end{document}